\let\@@pmod\pmod
\DeclareRobustCommand{\pmod}{\@ifstar\@pmods\@@pmod}
\def\@pmods#1{\mkern4mu({\operator@font mod}\mkern 6mu#1)}
\newcommand{\sslash}{\mathbin{/\mkern-6mu/}}
\mathchardef\mhyphen="2D
\renewcommand{\Re}{\operatorname{Re}}
\renewcommand{\Im}{\operatorname{Im}}
\newtheorem{theorem}{Theorem}
\newtheorem*{theorem*}{Theorem}
\newtheorem{definition}{Definition}
\newtheorem{lemma}{Lemma}
\newtheorem{example}{Example}
\newtheorem{proposition}{Proposition}
\newtheorem{corollary}{Corollary}
\newtheorem{claim}{Claim}
\newcommand{\tn}{\text{ d}}
\newcommand{\units}{\mathbb{G}}
\newcommand{\Vol}{\textnormal{Vol}}
\newcommand{\vc}{\mathcal{V}}
\newcommand{\vt}{\mathcal{T}}
\newcommand{\vs}{\mathcal{S}}
\newcommand{\symalg}[1]{\textnormal{Sym}^* (#1)}
\newcommand{\symalgh}[2]{\textnormal{Sym}^{#2} (#1)}
\newcommand{\wt}{\textnormal{wt}}
\newcommand{\cp}[1]{\textnormal{Critp}_{#1}}
\newcommand{\cv}[1]{\textnormal{Critv}_{#1}}
\newcommand{\Hom}{\textnormal{Hom}}
\newcommand{\Ext}{\textnormal{Ext}}
\begin{document}
	
	\title[Homological mirror symmetry of elementary birational cobordisms]{Homological mirror symmetry of elementary birational cobordisms}
	
	%    Remove any unused author tags.
	%    author one information
	
	\author[G. Kerr]{Gabriel Kerr}
	\address{Department of Mathematics, Kansas State University, Manhattan, KS, 66502, USA}
	%\curraddr{}
	\email{gdkerr@math.ksu.edu}
	%\thanks{My grant}
	
	\subjclass[2010]{Primary 53D37; Secondary 53D05}
	%    For articles to be published after 1 January 2010, you may use
	%    the following version:
	%\subjclass[2010]{Primary }

	%\keywords{}
	
	%\date{}
	
	%\dedicatory{}
	
	\begin{abstract}
	 The derived category of coherent sheaves $\mathcal{T}_B$ associated to a birational cobordism which is either a weighted projective space, a stacky Atiyah flip, or a stacky blow-up of a point has a conjectural mirror Fukaya-Seidel category $\mathcal{T}_A$. The potential $W$ defining $\mathcal{T}_{A}$ has base $\mathbb{C}^*$ and exhibits a great deal of symmetry. This paper investigates the structure of the Fukaya-Seidel category for the mirror potentials. A proof of homological mirror symmetry $\mathcal{T}_A \cong \mathcal{T}_B$ for these birational cobordisms is then given.
	\end{abstract}
	
	\maketitle

\section{Introduction}
Among the myriad of predictions arising from homological mirror symmetry is the conjecture of an equivalence between an $A$-model Fukaya-Seidel category $\mathcal{F} (W)$ associated to a potential $W$ and the $B$-model derived category $D^b (X)$ of coherent sheaves associated to a mirror algebraic variety $X$. This version of the conjecture usually requires that $X$ be a Fano stack and has been confirmed for a variety of special cases including weighted projective planes \cite{ako} and del Pezzo surfaces \cite{ako2, ueda}. The case of a non-Fano toric surface has also been explored in \cite{bdfkk}. While these approaches have provided evidence in favor of this classical conjecture, a proof for the case of general stacky surfaces and higher dimensional stacks has been elusive. The standard approach has instead been to replace the category $\mathcal{F} (W)$ with a conjecturally equivalent $A$-model category $\mathcal{C}_A$ and prove equivalences with this more manageable category (see \cite{abouzaid09, fltz, fu}).

In \cite{dkk}, a strategy for proving the original conjecture for arbitrary NEF toric stacks was introduced, one which could be extrapolated to the non-toric setting. This program was inspired by the result in \cite{bo}, which asserts that to certain birational maps $f : X \dashrightarrow Y$, there is a semi-orthogonal decomposition 
\begin{align} \label{eq:decomp} D (X) \cong \left<  \mathcal{T}^B_f , D (Y) \right>. \end{align}
In the toric setting, there is always a sequence of birational maps $\mathbf{f} = (f_1, \ldots, f_r)$,
\begin{align*}
	X \stackrel{f_1}{\dashrightarrow} X_1 \stackrel{f_2}{\dashrightarrow} \cdots \stackrel{f_r}{\dashrightarrow} X_r, 
\end{align*}
constituting a minimal model run on $X$ where $X_r$ is of smaller dimension than $X$. Inductively applying equation~\ref{eq:decomp}  then fully decomposes the category $D(X)$ in terms of the more elementary pieces  $\mathcal{T}^B_{f_i}$ and $D (X_r)$. As we will see, when we recast birational maps as variations of GIT, the space $X_r$ may often be considered to be the empty set. In these situations, one is left only with the component categories  $\mathcal{T}^B_{f_i}$ in the decomposition.

An A-model description of the mirror to such a minimal model sequence $\mathbf{f}$ was given in \cite{dkk}. It was shown that to $\mathbf{f}$, one may associate a degeneration $\psi_t$ of the mirror potential $W$ to $X$. Taking $D^*$ to be a punctured disc, the degeneration $\psi_t $ is a $t \in D^*$ dependent map from $\mathbb{P}^1$ to a compactified moduli space $\mathcal{M}$ of toric hypersurfaces. The potential $W_t$ is the pullback of a universal hypersurface $\mathcal{U}$ over $\mathcal{M}$. For $t = 1$, we have $W = W_1$, and as $t$ tends to zero, the map $\psi_t$ degenerates or bubbles, into a chain of $r$ projective lines 
\begin{align*}
	\psi_0 : \cup_{i = 1}^r \mathbb{P}^1 \to \mathcal{M}.
\end{align*}
This is analogous to the convergence of a holomorphic curve to a stable curve, however, the moduli space $\mathcal{M}$ is highly singular, so we refrain from using this language. Over each component $1 \leq i \leq r$ of the degenerate curve $\psi_0$, we have a map $\psi_0^i : \mathbb{P}^1 \to \mathcal{M}$ and we may pullback the universal hypersurface to obtain a component potential $W_{\mathbf{f}}^i : Y_i \to \mathbb{P}^1$. In general, the fibers of this potential will be highly degenerate themselves and not susceptible to the standard definition of a Fukaya-Seidel category. However,  in the generic case, there will be an irreducible component $Z_i$ of $Y_i$ on which $W_{\mathbf{f}}^i$ restricts to a Lefschetz fibration, so long as we consider it as a function over $\mathbb{C}^* \subset \mathbb{P}^1$ (i.e. we excise the intersection points with the $(i -1)$-st and $(i + 1)$-th components of $\psi_0$). While the non-generic case occurs frequently, it can be regarded as a suspension of the generic case.

Assuming all $W_{\mathbf{f}}^i$ are generic, \cite[Conjecture 3.22]{dkk} asserts that homological mirror symmetry may be enhanced to preserve these decompositions. In other words, Fukaya-Seidel categories $\mathcal{T}^A_{f_i} := \mathcal{F} (W_{\mathbf{f}}^i )$ are mirror to their $B$-model counterparts $\mathcal{T}_{f_i}^B$ arising from the minimal model sequence $\mathbf{f}$ and, moreover, the respective categories are naturally assembled in equivalent decompositions of $\mathcal{F} (W)$ and $D (X)$, respectively. The main results of this paper, Theorems~\ref{thm:Bmodel} and \ref{thm:Amodel}, establish the equivalence of the categories $\mathcal{T}^B_{f_i}$ and $\mathcal{T}^B_{f_i}$ when $f_i$ is an elementary birational cobordism.

Elementary birational cobordisms are found throughout algebraic geometry as stacky blowups, stacky Atiyah flips and weighted projective spaces. Moreover, factorization theorems such as \cite[Theorem~2]{wlodarczyk} show that birational maps may often be factored using these local models. Thus the conjecture's validity  opens the door to a more general approach to homological mirror symmetry, reducing the array of cases to that of minimal models. This paper proves a central part of this conjecture, namely, that there is an equivalence of semi-orthogonal components $\mathcal{T}^B_{f_i}$ and $\mathcal{T}^A_{f_i}$ when $f_i$ arises as a toric birational cobordism with zero dimensional center. 

The plan of the paper is as follows. In Section~\ref{sec:B} the categorical background will be reviewed and a complete description of elementary birational cobordisms will be given. Using a toric model, these birational maps $f_\mathbf{a} : X_+^\mathbf{a} \dashrightarrow X_-^{\mathbf{a}}$ are fully classified by specifying a lattice element $\mathbf{a} \in \mathbb{Z}^{d + 2}$.  Applying results on semi-orthogonal decompositions arising from variations of GIT from \cite{bfk}, we then give an explicit and elementary representation of the $B$-model category $\mathcal{T}^B_{\mathbf{a}}$ associated to the birational morphism in Theorem~\ref{thm:Bmodel}. 

In Section~\ref{sec:A} we describe Fukaya-Seidel categories over potentials $W : Y \to \mathbb{C}^*$ obtained by pullbacks along $z^n$. Given $\mathbf{a} \in \mathbb{Z}^{d + 2}$ satisfying certain balancing conditions, we then define the potential $W_{\mathbf{a}}$, called a circuit potential, from a $d$-dimensional pair of pants to $\mathbb{C}^*$.  Pulling back $W_{\mathbf{a}}$ along $z \mapsto z^n$ yields another potential which was conjectured to be a mirror to $f_{\mathbf{a}}$ in \cite{dkk}. The Fukaya-Seidel category associated to this pullback, denoted $\mathcal{F} (W_{\mathbf{a}}^{1/n})$,  is defined and Theorem~\ref{thm:Amodel}, which describes $\mathcal{F} (W_{\mathbf{a}}^{1/n})$ as the mirror category, is stated. As is often the case, obtaining an explicit representation of the Fukaya category takes a fair amount of labor, so the remaining portion of the paper is devoted to this and to proving Theorem~\ref{thm:Amodel}. The proof is by induction, so a detailed analysis of the one-dimensional case is given in Section~\ref{sec:1dcase}. Proceeding to the case of arbitrary dimension, Section~\ref{sec:dcase} explores the general structure of a $d$-dimensional circuit potential. Proposition~\ref{prop:fibprod} gives a surprising and useful characterization of the fiber of $W_{\mathbf{a}}$ as the pullback of a $(d-1)$-dimensional circuit along a one-dimensional circuit. This key result, combined with an application of the well developed techniques of matching paths and matching cycles in \cite{seidel}, paves the way for the induction step. Finally, in Section~\ref{sec:proof} the proof of Theorem~\ref{thm:Amodel} is completed.

\subsection*{Acknowledgements} The author thanks M. Ballard, C. Diemer, D. Favero, L. Katzarkov, P. Seidel and Y. Soibelman for helpful discussions. 

\section{$B$-model} \label{sec:B}
We will first describe basic constructions and introduce our main algebra $R_{\mathbf{a}}$ in Section~\ref{sec:catprelim}. We will then detail the birational cobordisms, viewed as variations of GIT, and establish the relationship between the category $\mathcal{T}^B_{\mathbf{a}}$ and modules over $R_\mathbf{a}$. 
\subsection{\label{sec:catprelim}Categorical preliminaries}
Let $\mathcal{C}$ be an $A_\infty$ or DG-category. We will assume a general background of such categories as presented in \cite{lefevrehasegawa, seidel}. 

\begin{definition} \label{def:directedcategory}
	Suppose $\mathcal{C}$ is an $A_\infty$-category and $\mathcal{B} = \{A_1 , \ldots, A_m\}$ is an ordered collection  of objects in $\mathcal{C}$. The directed subcategory $\mathcal{C}_\mathcal{B}$ has objects $\mathcal{B}$ and morphisms 
	\begin{align*}
	\Hom_{\mathcal{C}_\mathcal{B}} (A_i, A_j) & = \begin{cases} 1_{A_i} & \textnormal{ if } i = j, \\ \Hom_{\mathcal{C}} (A_i, A_j ) & \textnormal{ if } i < j, \\
	0 & \textnormal{ otherwise }.
	\end{cases}
	\end{align*}
	The differentials, compositions and higher compositions are induced from the respective maps in $\mathcal{C}$.
\end{definition}

Let $\mathbf{a} = (a_0, \ldots, a_{d + 1}) \in \mathbb{Z}^{d + 2}$ be an
element for which 
\begin{align} \label{eq:balance} \begin{split} \sum_{i = 0}^{d + 1} a_i & = 0, \\
a_i &\ne 0 \textnormal{ for all }0 \leq i \leq d + 1. \end{split}
\end{align} 
We will call an element satisfying the first condition \textbf{balanced}. If there are $(p + 1)$ positive and $(q + 1)$ negative coordinates, we say that $\mathbf{a}$ has \textbf{signature} $(p,q)$.

Let $V$ be a vector space with basis $\{v_0 , \ldots, v_{d + 1}\}$ and introduce an additional function 
\begin{align} \label{eq:defnu}
\nu : \{0, \ldots, {d + 1} \} \to \mathbb{Z}.
\end{align}
When $\sum \nu (i) = 0$, we will call $\nu$ balanced.
This function, together with $\mathbf{a}$, makes $V$ into a bigraded vector space with
\begin{align} \label{eq:degrees} |v_i| = \left( \deg (v_i) , \wt (v_i) \right) &= \begin{cases} ( 2 \nu (i ) , a_i ) & \textnormal{ if } a_i > 0, \\ ( 2 \nu (i) + 1, - a_i )  & \textnormal{ if } a_i < 0. \end{cases} 
\end{align}
Take $R_{\mathbf{a}, \nu}$ to be the graded-symmetric algebra $\symalg{V}$ with respect to the degree grading, equipped with the additional grading associated to weight. If the function $\nu$ is clear from the context, we simply write $R_{\mathbf{a}}$ for $R_{\mathbf{a}, \nu}$. The category of $R_\mathbf{a}$ modules which are graded with respect to degree only, and whose homomorphisms respect the degree, will be denoted $R_{\mathbf{a}}\mhyphen \textnormal{mod}$. The category of bigraded modules will be denoted $R_{\mathbf{a}}\mhyphen \textnormal{mod}^\mathbb{Z}$. For $k \in \mathbb{Z}$, let $R_{\mathbf{a}} (k) \in R_{\mathbf{a}}\mhyphen \textnormal{mod}^\mathbb{Z}$ be the  $R_\mathbf{a}$-module with weights shifted by $-k$, i.e. with $|v_i| = ( \deg (v_i), |a_i| - k)$.

\begin{definition} \label{def:can}
	Given $\mathbf{a}$, $\nu$ and $n \in \mathbb{N}$, let $\mathcal{B}_{\mathbf{a},  n} = \{R_{\mathbf{a}}, R_{\mathbf{a}} (1), \ldots, R_{\mathbf{a}} (n - 1)  \}$ and define the directed subcategory 
	
	\begin{align*} 	\mathcal{C}_{\mathbf{a}, \nu, n} := \left( R_{\mathbf{a}}\mhyphen \textnormal{mod}^\mathbb{Z} \right)_\mathcal{B}. \end{align*}
	Write $\mathcal{D}_{\mathbf{a}, \nu, n}$ for its derived category.
\end{definition}
Note that $\oplus_{k = 0}^{n - 1} R_{\mathbf{a}} (k)$ generates $\mathcal{C}_{\mathbf{a}, \nu,  n}$ and $\mathcal{B}_{\mathbf{a}, n}$ is an exceptional collection. Thus, one way of describing the derived category is via the Yoneda functor as  
\begin{align*} \mathcal{D}_{\mathbf{a}, \nu, n} \cong  D \left( \textnormal{End}_{R_{\mathbf{a}}} \left( \oplus_{k = 0}^{n - 1} R_{\mathbf{a}} (k)  \right)\mhyphen \textnormal{mod} \right).
\end{align*}  
We now make an observation about Koszul duality for this endomorphism algebra. In one of the first known instances of Koszul duality \cite{beilinson, BGS}, it was shown that the Koszul dual of a super-symmetric algebra $\symalg{V}$ was the super-symmetric algebra $\symalg{V^*[1]}$. Translating this result into our notation gives \begin{align*} R_{\mathbf{a}, \nu}^! = R_{-\mathbf{a}, - \nu}. \end{align*} 
From \cite{bondal}, it follows that this duality carries over to the exceptional collections and directed subcategories. To state this result precisely, recall that, given an exceptional collection with $n$-objects in an $A_\infty$-category $\mathcal{A}$, one may mutate the collection by performing a sequence of twists $\sigma_i$ for $1 \leq i \leq n - 1$. As these twists satisfy the braid relations, this gives a braid group $B_{n - 1}$ action on the set of exceptional collections in $\mathcal{A}$. The square root $\Delta$ of the positive generator of the center (a half-twist of the set of strands) then takes an exceptional collection $\mathcal{B}$ to $\Delta \mathcal{B}$ which, following \cite{seidel}, we call the Koszul dual collection.
\begin{proposition} \label{prop:duality}
	The Koszul dual of $\textnormal{End}_{R_{\mathbf{a}}} \left( \oplus_{k = 0}^{n - 1} R_{\mathbf{a}} (k)  \right)$ is $\textnormal{End}_{R_{\mathbf{-a}}} \left( \oplus_{k = 0}^{n - 1} R_{\mathbf{-a}} (k)  \right)$. Furthermore,  $\mathcal{B}_{-\mathbf{a}, n} = \Delta \mathcal{B}_{\mathbf{a}, n}$.
\end{proposition}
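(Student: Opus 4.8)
The plan is to obtain both assertions from the two inputs recalled just above: the classical Koszul self-duality of graded-symmetric algebras and Bondal's transfer of that duality to exceptional collections and their directed endomorphism algebras. Write $E_{\mathbf{a}} := \textnormal{End}_{R_{\mathbf{a}}}\big(\oplus_{k = 0}^{n - 1} R_{\mathbf{a}}(k)\big)$, which by the Yoneda functor is the endomorphism algebra of the full exceptional collection $\mathcal{B}_{\mathbf{a},n}$. Since each $R_{\mathbf{a}}(k)$ is free of rank one, the morphism space $\Hom(R_{\mathbf{a}}(i), R_{\mathbf{a}}(j))$ in $R_{\mathbf{a}}\mhyphen \textnormal{mod}^\mathbb{Z}$ is the weight-$(j-i)$ component of $R_{\mathbf{a}}$ (and vanishes for $j < i$, as $R_{\mathbf{a}}$ is non-negatively weighted); thus $E_{\mathbf{a}}$ is assembled directly out of $R_{\mathbf{a}}$ and inherits both the $\deg$-grading and the $\wt$-grading, the latter playing the role of the Koszul grading.

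The first step is to make the identity $R_{\mathbf{a},\nu}^! = R_{-\mathbf{a},-\nu}$ precise as an isomorphism of bigraded algebras, which amounts to tracking the bigrading through $(\symalg{V})^! = \symalg{V^*[1]}$. Passing to $V^*$ negates the weight of each $v_i$; adopting the standard convention that the weight grading on the Koszul dual is the positively graded Ext-grading restores $\wt(v_i^*) = |a_i| = |{-a_i}|$, which is exactly the weight that \eqref{eq:degrees} assigns to the $i$-th generator of $R_{-\mathbf{a},-\nu}$, and the weight-shift module $R_{\mathbf{a}}(k)$ is carried to $R_{-\mathbf{a},-\nu}(k)$. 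For the cohomological degree, the shift $[1]$ applied to $v_i^*$ interchanges the two cases of \eqref{eq:degrees} while replacing $\nu$ by $-\nu$: when $a_i > 0$ the generator $v_i$ lies in degree $2\nu(i)$, so $v_i^*[1]$ lies in degree $-2\nu(i)+1 = 2(-\nu)(i)+1$, which is the degree \eqref{eq:degrees} assigns to the $i$-th generator of $R_{-\mathbf{a},-\nu}$ (whose $i$-th coordinate $-a_i$ is now negative); the case $a_i < 0$ is symmetric. This yields an object-by-object identification of $\mathcal{B}_{\mathbf{a},n}$ with $\mathcal{B}_{-\mathbf{a},n}$ under Koszul duality.

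The second step invokes Bondal: the half-twist $\Delta$ of the braid-group action on exceptional collections realizes Koszul duality, in the sense that $\Delta\mathcal{B}$ is by definition the Koszul dual collection and its directed endomorphism algebra is the Koszul dual of that of $\mathcal{B}$. Applying this to $\mathcal{B}_{\mathbf{a},n}$ and substituting the identifications of the first step gives at once $\Delta\mathcal{B}_{\mathbf{a},n} = \mathcal{B}_{-\mathbf{a},n}$ and $E_{\mathbf{a}}^! = \textnormal{End}_{R_{-\mathbf{a}}}\big(\oplus_{k = 0}^{n - 1} R_{-\mathbf{a}}(k)\big)$, which are the two claims of the proposition.

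I expect the only genuine obstacle to be the grading bookkeeping of the first step: one must check that the cohomological shift $[1]$ in $\symalg{V^*[1]}$, interacting with the parity dichotomy between positive and negative coordinates in \eqref{eq:degrees} and with the convention fixing the Ext-grading, produces precisely the pair $(-\mathbf{a},-\nu)$ with the correct weight shifts, and that the ordering of the resulting collection agrees with the convention for $\Delta$ fixed in \cite{seidel}, so that no spurious cohomological shifts of the $R_{-\mathbf{a}}(k)$ are introduced. Once these conventions are pinned down this is a finite verification, and the remainder is a direct appeal to \cite{beilinson, BGS} and \cite{bondal}.
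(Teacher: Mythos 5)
Your proposal is correct and follows essentially the same route as the paper, whose proof is simply a one-line appeal to the Koszul self-duality $R_{\mathbf{a},\nu}^! = R_{-\mathbf{a},-\nu}$ of graded-symmetric algebras (already stated before the proposition) combined with Bondal's general results transferring this duality to exceptional collections, directed subcategories, and the half-twist $\Delta$. Your grading bookkeeping in the first step merely makes explicit what the paper leaves implicit in that citation.
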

\begin{proof}
	This follows from a basic application of more general results in \cite{bondal}.
\end{proof}
\begin{corollary}
	There categories $\mathcal{D}_{\mathbf{a}, \nu, n}$ and $\mathcal{D}_{-\mathbf{a}, - \nu, n}$ are equivalent.
\end{corollary}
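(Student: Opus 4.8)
The plan is to reduce the statement to Proposition~\ref{prop:duality} by way of the Yoneda description recorded just before it. Write $A_{\mathbf{a}} := \textnormal{End}_{R_{\mathbf{a}}}\!\left(\bigoplus_{k=0}^{n-1} R_{\mathbf{a}}(k)\right)$, regarded as an $A_\infty$-algebra of derived endomorphisms. Since $\oplus_{k} R_{\mathbf{a}}(k)$ generates $\mathcal{C}_{\mathbf{a},\nu,n}$ and $\mathcal{B}_{\mathbf{a},n}$ is exceptional, the Yoneda functor gives $\mathcal{D}_{\mathbf{a},\nu,n}\cong D(A_{\mathbf{a}}\mhyphen\textnormal{mod})$, and likewise $\mathcal{D}_{-\mathbf{a},-\nu,n}\cong D(A_{-\mathbf{a}}\mhyphen\textnormal{mod})$. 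So it suffices to exhibit a triangulated equivalence $D(A_{\mathbf{a}}\mhyphen\textnormal{mod})\cong D(A_{-\mathbf{a}}\mhyphen\textnormal{mod})$.

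First I would invoke Proposition~\ref{prop:duality} to identify $A_{-\mathbf{a}}$ with the Koszul dual $A_{\mathbf{a}}^{!}$. The Koszul-property and finiteness hypotheses needed here hold because every generator $v_i$ of $R_{\mathbf{a}}=\symalg{V}$ has strictly positive weight $|a_i|$; consequently $R_{\mathbf{a}}$ is connected in the weight grading, the collection $\mathcal{B}_{\mathbf{a},n}$ is genuinely directed (no backward morphisms), and $A_{\mathbf{a}}$ is finite-dimensional. Then the classical fact that Koszul duality is a derived equivalence — in the form it takes for endomorphism algebras of mutation-related exceptional collections, as in \cite{bondal} building on \cite{beilinson, BGS} — supplies $D(A_{\mathbf{a}}\mhyphen\textnormal{mod})\cong D(A_{\mathbf{a}}^{!}\mhyphen\textnormal{mod})$, the equivalence being realized by the half-twist $\Delta$ that carries $\mathcal{B}_{\mathbf{a},n}$ to its Koszul dual collection. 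Concatenating the three equivalences yields $\mathcal{D}_{\mathbf{a},\nu,n}\cong\mathcal{D}_{-\mathbf{a},-\nu,n}$.

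An equivalent route, which I would spell out if a more self-contained argument is wanted, stays on the level of exceptional collections: $\mathcal{B}_{\mathbf{a},n}$ generates a triangulated subcategory $\mathcal{T}_{\mathbf{a}}\subset D(R_{\mathbf{a}}\mhyphen\textnormal{mod}^{\mathbb{Z}})$, and the reconstruction theorem of \cite{bondal} (see also \cite{seidel}) identifies $\mathcal{D}_{\mathbf{a},\nu,n}=D(\mathcal{C}_{\mathbf{a},\nu,n})$ with $\mathcal{T}_{\mathbf{a}}$. The half-twist $\Delta$ takes $\mathcal{B}_{\mathbf{a},n}$ to another full exceptional collection of $\mathcal{T}_{\mathbf{a}}$, whose directed subcategory therefore also has derived category $\mathcal{T}_{\mathbf{a}}$; and Proposition~\ref{prop:duality} says that this mutated collection, with its induced $A_\infty$-structure and bigrading, is precisely $\mathcal{B}_{-\mathbf{a},n}$ inside $R_{-\mathbf{a}}\mhyphen\textnormal{mod}^{\mathbb{Z}}$, so that its directed subcategory is $\mathcal{C}_{-\mathbf{a},-\nu,n}$ up to $A_\infty$-equivalence. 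Hence $\mathcal{D}_{-\mathbf{a},-\nu,n}\cong\mathcal{T}_{\mathbf{a}}\cong\mathcal{D}_{\mathbf{a},\nu,n}$.

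The conceptual content being already in Proposition~\ref{prop:duality}, the one place that genuinely needs care is the grading bookkeeping: the half-twist $\Delta$ introduces homological shifts, and the passage $(\mathbf{a},\nu)\mapsto(-\mathbf{a},-\nu)$ is exactly what cancels them, so one must check that $\Delta\mathcal{B}_{\mathbf{a},n}$ agrees with $\mathcal{B}_{-\mathbf{a},n}$ not merely as a set of objects but compatibly with both gradings and with the $A_\infty$-operations, i.e. as an $A_\infty$-quasi-isomorphism of the relevant directed endomorphism algebras. This is where I expect the main (though not deep) effort to lie; once it is in place, the chain of equivalences above closes up.
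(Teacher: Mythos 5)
Your proposal is correct and takes essentially the same route the paper intends: the corollary is stated without a separate argument precisely because it is immediate from Proposition~\ref{prop:duality}, the point being that $\mathcal{B}_{-\mathbf{a},n}=\Delta\mathcal{B}_{\mathbf{a},n}$ is again a full exceptional collection generating the same triangulated category, equivalently that the Koszul-dual directed endomorphism algebras (finite-dimensional here, since all weights $|a_i|$ are positive) are derived equivalent. Your Yoneda/Koszul formulation and your mutation formulation are just the two faces of that single observation, so nothing further is needed.
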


\subsection{Birational cobordisms induced by $\mathbf{a} \in \mathbb{Z}^{d + 1}$}\label{sec:bircob}
The basic birational maps we consider are generalizations of the Mori fibration $\pi : \mathbb{P}^d \to pt$, a blow-down morphism and an Atiyah flip. The generalization is to the stacky setting and can be accomplished by viewing these maps as a birational cobordism \cite{wlodarczyk} or, more precisely, as variations of GIT quotients \cite{bfk, hl}. We will use the element $\mathbf{a} \in \mathbb{Z}^{d + 1}$ satisfying equation~\eqref{eq:balance} and, reordering the coordinates if necessary, we assume that $a_{d + 1} < 0$. Consider the $\mathbb{C}^*$-action on $\mathbb{C}^{d + 1}$ given by 
\begin{align*}
	t \cdot \left(z_0 , \ldots, z_{d} \right) = \left( t^{a_0} z_1 , \ldots, t^{a_{d }} z_{d } \right).
\end{align*}
Following Reid \cite{reid}, we take 
\begin{align*}
B_- = \{(z_0, \ldots, z_{d }): z_i = 0 \textnormal{ for } a_i < 0\}, \\ B_+ = \{(z_0, \ldots, z_{d}): z_i = 0 \textnormal{ for } a_i > 0\}.
\end{align*}
We write $X^{\mathbf{a}}_\pm = \left( \mathbb{C}^{d + 1} \backslash B_\pm \right) / \mathbb{C}^*$, $X = \mathbb{C}^{d + 1} $  and define the birational map 
\begin{equation} \label{eq:birmap}
\begin{tikzpicture}[baseline=(current  bounding  box.center), scale=1.5]
\node (A) at (0,0) {$[X / \mathbb{C}^*]$};
\node (B) at (-1,1) {$X_+^{\mathbf{a}}$};
\node (C) at (1,1) {$X_-^{\mathbf{a}}$};
\path[->,font=\scriptsize]
(B) edge node {} (A)
(C) edge node {} (A);
\path[dashed,->,font=\scriptsize]
(B) edge node[above]{$f_{\mathbf{a}}$} (C);
\end{tikzpicture} 
\end{equation}	
There are toric models for $X_\pm^{\mathbf{a}}$ which were explored in \cite{morelli}. Indeed, every elementary birational map $f : Y_1 \dashrightarrow Y_2$ of DM toric stacks that has zero dimensional center is obtained by applying $f_{\mathbf{a}}$ or its inverse to an open subset $X^{\mathbf{a}}_\pm \subset Y_1$. Owing to the fact that $a_{d + 1} <  0$, results from \cite[Theorem~1]{bfk}, \cite{hl, kawamata}, and in the non-stacky case \cite{bo}, there is a semi-orthogonal decomposition 
\begin{align} \label{eq:sodecomp}
	D (X^{\mathbf{a}}_+ ) \cong \left< \mathcal{T}^B_{\mathbf{a}}, D (X^{\mathbf{a}}_-) \right>.
\end{align}
Here, and throughout the paper, we write $D(Y)$ for the bounded derived category of coherent sheaves on a stack $Y$. 

To state this result more explicitly and make use of the extraordinary machinery developed in \cite{bfk}, we briefly review their setup and one of their main theorems, rewritten slightly to simplify the application to the results of this paper. Take $Y$ to be a smooth variety with the action of a reductive linear algebraic group $G$ and two $G$-equivariant line bundles $\mathcal{L}_\pm$. Let  $Y (-) = Y^{ss}(\mathcal{L}_-)$ and $Y(+) = Y^{ss} (\mathcal{L}_+)$ be the semi-stable points and $Y_\pm = [Y (\pm ) / G]$ their quotient stacks. Assume that 
\begin{enumerate}
	\item $\mathcal{L}_t = \mathcal{L}_-^{\frac{1 - t}{2}} \otimes \mathcal{L}_+^{\frac{1 + t}{2}}$ has constant semi-stable locus $ Y^{ss}(\mathcal{L}_t) = Y(-)$ for $t \in [-1, 0)$ and $Y^{ss}(\mathcal{L}_t) = Y (+) $ for $t \in (0, 1]$. 
	\item $Y^{ss}(0) \backslash \left( Y(-) \cup Y (+) \right)$ is connected and for any element $y$, the stabilizer $G_y$ is isomorphic to $\mathbb{G}_m$.
\end{enumerate}
These conditions ensure that there is a one-parameter subgroup $\lambda : \mathbb{G}_m \to G$ and a connected component $Z_\lambda$ of the fixed locus $Y^\lambda$ of $\lambda$ satisfying $Y^{ss} (0) = Y (\pm) \sqcup S_{\pm \lambda}$. Here we write $S_{\pm \lambda}$ for those points $y \in Y$ for which $\lim_{t\to 0} \lambda(t^{\pm 1}) \cdot y \in Z_\lambda$ for $t \in \mathbb{G}_m$. 

Take $C(\lambda)$ to be the centralizer of $\lambda$ in $G$ and assume there is a splitting $C(\lambda ) = \lambda \times G_\lambda$. Let $[Y^\lambda \sslash G_\lambda]$ be the GIT quotient of the $\lambda$ fixed locus by $G_\lambda$ using the polarization $\mathcal{L}_0$. Finally, write $\mu$ for the weight of $\lambda$ on the anti-canonical bundle of $X$ along $Z_\lambda$.

\begin{theorem}[{\cite[Theorem~1]{bfk}}] \label{thm:bfk}
	If $\mu > 0$ then there are fully faithful functors $\Phi_d^+ : D (Y_- ) \to D (Y_+)$ and, for $0 \leq j \leq \mu - 1$,  $\Upsilon_j^+ : D ([Y^\lambda \sslash G_\lambda]) \to D (Y_+ )$ yielding a semi-orthogonal decomposition.
	\begin{align*}
	 D (Y_+ ) = \left< \Upsilon_0^+, \ldots, \Upsilon_{\mu - 1}^+, D (Y_-) \right> .
	\end{align*}
\end{theorem}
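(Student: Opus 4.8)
\emph{Remark on strategy.} As Theorem~\ref{thm:bfk} is a result of \cite{bfk}, one may simply invoke it; for orientation I sketch the \emph{grade restriction window} mechanism behind it. The plan is to realise both $D(Y_+)$ and $D(Y_-)$ as \emph{window subcategories} of one fixed ambient derived category and to read the decomposition off from the difference of the two windows. \emph{Step 1 (the local wall-crossing picture).} By Kempf--Ness/Hilbert--Mumford theory together with the hypotheses on the $\mathcal{L}_t$, the semistable loci sit inside the common locus as complements of the blades, $Y(\pm)=Y^{ss}(0)\setminus S_{\pm\lambda}$; each $S_{\pm\lambda}$ is a smooth locally closed substack retracting onto $Z_\lambda$ via $\lim_{t\to 0}\lambda(t^{\pm 1})\cdot(-)$, with affine-space fibres, and $S_{-\lambda}$ descends to a substack $\mathcal{E}_+\subset Y_+$ (the ``contracted locus'') fibred over $[Y^\lambda\sslash G_\lambda]$ with weighted-projective fibres. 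Put $\mathfrak{X}:=[Y^{ss}(0)/G]$; hypothesis~(2) makes this the right local model, with a single wall. Restriction along the two opens gives functors $r_\pm\colon D(\mathfrak{X})\to D(Y_\pm)$, each a Verdier localisation whose kernel is the subcategory of complexes set-theoretically supported on $S_{\pm\lambda}$.

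\emph{Step 2 (windows compute the quotients).} Using the splitting $C(\lambda)=\lambda\times G_\lambda$, every $E\in D(\mathfrak{X})$ carries a well-defined finite set of $\lambda$-weights along $Z_\lambda$ (those of $L\sigma^*E$ for $\sigma\colon Z_\lambda\hookrightarrow Y^{ss}(0)$); for a finite interval $I\subset\mathbb{Z}$ let $\mathcal{W}_I\subset D(\mathfrak{X})$ be the full subcategory of objects whose $\lambda$-weights along $Z_\lambda$ all lie in $I$. I would then prove the two standard window lemmas. \emph{Essential surjectivity (``grade restriction rule'')}: every object of $D(Y_\pm)$ lifts to some $\mathcal{W}_I$ — lift arbitrarily to $\mathfrak{X}$, then repeatedly form cones against pushforwards of $\mathcal{O}_{S_{\pm\lambda}}$-modules (resolved Koszul-fashion transverse to the blade) to shift the weights into $I$. \emph{No kernel (cohomology vanishing)}: if $E\in\mathcal{W}_I$ with $|I|$ at most the ``admissible width'' $w_\pm$ — a positive integer read off from the $\lambda$-weights of the normal bundle to $S_{\pm\lambda}$ along $Z_\lambda$ — and $r_\pm E=0$, then $E=0$; indeed $E$ is then supported on $S_{\pm\lambda}$, and a complex supported on the affine bundle $S_{\pm\lambda}\to Z_\lambda$ with $\lambda$-weights confined to a window of width $w_\pm$ is forced to vanish after pushing to the quotient, since $w_\pm$ exactly exhausts the weights contributed by the relative dualising twist (this is Teleman's quantization theorem, equivalently a direct Koszul-complex computation on the blade). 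Together these give equivalences $r_-\colon\mathcal{W}_{I_-}\xrightarrow{\ \sim\ }D(Y_-)$ and $r_+\colon\mathcal{W}_{I_+}\xrightarrow{\ \sim\ }D(Y_+)$ for windows of the maximal admissible widths; since the anticanonical $\lambda$-weight along $Z_\lambda$ is $\mu$, one has $w_+-w_-=\mu>0$, so the windows can be aligned so that $\mathcal{W}^-:=\mathcal{W}_{I_-}\subset\mathcal{W}_{I_+}=:\mathcal{W}^+$, with $I_+\setminus I_-$ consisting of $\mu$ consecutive weights.

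\emph{Step 3 (assembling the decomposition).} Transporting $\mathcal{W}^-\subset\mathcal{W}^+$ through the equivalences of Step~2 defines the fully faithful $\Phi^+\colon D(Y_-)\to D(Y_+)$ (``restrict, then re-expand into the wider window''). The $j$-th extra weight slot gives a one-step widening of the window generated, inside $\mathcal{W}^+$, by pushforwards to $\mathfrak{X}$ of $\mathcal{O}_{S_{-\lambda}}$-modules pulled back from $Z_\lambda$ and twisted into $\lambda$-weight $j$; under $r_+$ this becomes the essential image of a fully faithful $\Upsilon_j^+\colon D([Y^\lambda\sslash G_\lambda])\to D(Y_+)$ landing in objects supported on $\mathcal{E}_+$, for $0\le j\le\mu-1$. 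It remains to check that $D(Y_+)=\langle\Upsilon_0^+,\dots,\Upsilon_{\mu-1}^+,D(Y_-)\rangle$ is semiorthogonal: generation holds because $\mathcal{W}^+$ generates $D(Y_+)$ and is built from $\mathcal{W}^-$ and the $\mu$ slices; semiorthogonality is weight bookkeeping plus the vanishing lemma of Step~2 — a weight-$j$ object on $Z_\lambda$ is right-orthogonal to the image of $\mathcal{W}^-$, and the $\Upsilon_j^+$ are mutually semiorthogonal in the stated order because their defining weights are distinct and increasing, so the relevant $\Ext$-groups reduce to $R\Gamma$ of negative-weight bundles on the weighted-projective fibres of $\mathcal{E}_+$.

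\emph{The main obstacle} is the cohomology-vanishing lemma of Step~2: controlling the $\lambda$-weights of the derived pushforward of $\mathcal{O}$-modules along $S_{\pm\lambda}\to Z_\lambda$ and checking that a window of width exactly $w_\pm$ annihilates the relevant $\Ext$'s — this is the real content (Teleman's theorem in disguise). A close second is the stacky bookkeeping: the non-trivial generic $\mathbb{G}_m$-stabiliser on $Y^{ss}(0)\setminus(Y(+)\cup Y(-))$, the splitting $C(\lambda)=\lambda\times G_\lambda$ needed to make ``$\lambda$-weight along $Z_\lambda$'' meaningful, and the identification of each slice with $D([Y^\lambda\sslash G_\lambda])$ rather than with $D([Z_\lambda/C(\lambda)])$.
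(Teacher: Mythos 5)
Your proposal is consistent with the paper, which offers no proof of this statement at all: it is quoted verbatim from \cite{bfk} and simply invoked, exactly as you note one may do. Your sketch of the grade-restriction-window mechanism (windows $\mathcal{W}_{I_\pm}$ equivalent to $D(Y_\pm)$, width gap $\mu$ from the anticanonical $\lambda$-weight along $Z_\lambda$, and the $\mu$ extra weight slices giving the $\Upsilon_j^+$) accurately reflects the argument of the cited source, so there is nothing to reconcile with the paper's text.
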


This theorem shows that in the decomposition \eqref{eq:sodecomp}, the category $\mathcal{T}^{B}_{\mathbf{a}}$ may be further decomposed into the subcategories $\Upsilon^+_j$. In fact, in our case, the categories $\Upsilon^+_j = \left< E_j \right>$ are generated by a single exceptional object $E_j$ and the semi-orthogonal decomposition of $\mathcal{T}^B_\mathbf{a}$ gives a full exceptional collection. We now give an explicit description of the $B$-model category $\mathcal{T}^{B}_{\mathbf{a}}$ associated to $f_{\mathbf{a}}$. We will utilize more results from \cite{bfk} in the proof, but only quote them.
\begin{theorem} \label{thm:Bmodel}
Let $\nu$ be any function for which $\nu ( i ) = 0$ for all $i \ne d + 1$. Then there is an equivalence of categories \begin{align*} \mathcal{T}^B_{\mathbf{a}} \cong \mathcal{D}_{\mathbf{a}, \nu, - a_{d + 1}}.
	\end{align*}
\end{theorem}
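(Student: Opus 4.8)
The plan is to apply Theorem~\ref{thm:bfk} to the specific variation of GIT that defines $f_{\mathbf{a}}$ and then identify the resulting exceptional collection with the modules $R_{\mathbf{a}}(k)$. First I would set up the VGIT data: take $Y = \mathbb{C}^{d+1}$, $G = \mathbb{C}^*$ acting with weights $(a_0, \ldots, a_d)$, and the two linearizations $\mathcal{L}_\pm$ given by the characters $\pm 1$ (or a positive and negative multiple thereof), so that $Y(\pm) = \mathbb{C}^{d+1} \setminus B_\pm$ and $Y_\pm = X^{\mathbf{a}}_\pm$. The one-parameter subgroup $\lambda$ is the whole group $\mathbb{G}_m$, the fixed locus $Z_\lambda$ is the origin, and since $a_{d+1} < 0$ the weight $\mu$ of $\lambda$ on the anticanonical bundle $\det(T\mathbb{C}^{d+1}) = \mathbb{C}^{d+1}$ along $Z_\lambda$ is $\sum_{i=0}^{d} a_i = -a_{d+1} > 0$. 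Here I am using that $\mathbf{a}$ is balanced. The quotient $[Y^\lambda \sslash G_\lambda]$ is a point, so each $\Upsilon^+_j$ is generated by a single exceptional object $E_j$, and Theorem~\ref{thm:bfk} yields $\mathcal{T}^B_{\mathbf{a}} = \langle E_0, \ldots, E_{-a_{d+1}-1}\rangle$.

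Next I would make the functors $\Upsilon^+_j$ explicit using the window/grade-restriction description from \cite{bfk}: the objects $E_j$ are (push-forwards of) the $G$-equivariant line bundles $\mathcal{O}(j)$ on the quotient stack $[\mathbb{C}^{d+1}/\mathbb{C}^*]$, restricted to $X^{\mathbf{a}}_+$, with $j$ ranging over the appropriate window of $\mu = -a_{d+1}$ consecutive weights. The morphism spaces $\Hom^\bullet(E_j, E_k)$ between these are then computed from the Koszul/Čech complex of the vanishing locus, and one finds they are governed precisely by the graded-symmetric algebra on the generators $v_i$ dual to the coordinates $z_i$, with the bidegree of $v_i$ determined by the sign of $a_i$ and its weight $|a_i|$ — this is exactly the algebra $R_{\mathbf{a},\nu}$ with $\nu(i) = 0$ for $i \neq d+1$ (the shift $\nu(d+1)$ records the cohomological placement of the last, negative, variable coming from the $B_+$ locus). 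Matching the weight shifts, the object $E_j$ corresponds to $R_{\mathbf{a}}(j)$, so $\End_{R_{\mathbf{a}}}(\oplus_{k=0}^{-a_{d+1}-1} R_{\mathbf{a}}(k))$ is identified with the endomorphism algebra of $\oplus_j E_j$, and the semi-orthogonality (directedness) of the $\Upsilon^+_j$ matches the directedness built into $\mathcal{C}_{\mathbf{a},\nu,n}$.

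Finally I would assemble these identifications: the equivalence $\mathcal{D}_{\mathbf{a},\nu,-a_{d+1}} \cong D(\End_{R_{\mathbf{a}}}(\oplus_k R_{\mathbf{a}}(k))\mhyphen\textnormal{mod})$ from the Yoneda functor (noted right after Definition~\ref{def:can}), combined with the identification of this endomorphism algebra with $\End(\oplus_j E_j)$ and of the latter's derived category of modules with $\mathcal{T}^B_{\mathbf{a}}$ via the exceptional collection $\langle E_0, \ldots, E_{-a_{d+1}-1}\rangle$, gives the claimed equivalence. One should also check independence from the choice of $\nu$ among those with $\nu(i) = 0$ for $i \neq d+1$: changing $\nu(d+1)$ only shifts the cohomological grading of the generator $v_{d+1}$, which at worst re-indexes but does not change the quasi-equivalence class.

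The main obstacle I anticipate is the explicit morphism computation: pinning down that $\Hom^\bullet(E_j, E_k)$ — including the $A_\infty$-structure, not just cohomology — is the degree-$(k-j)$ weight-graded piece of $R_{\mathbf{a},\nu}$ with the correct sign conventions, cohomological shifts (the odd generators from the negative $a_i$ and the role of the $B_+$/$B_-$ excision), and weight normalizations. This requires carefully tracking the grade-restriction windows of \cite{bfk} and the Koszul resolution of the (stacky) origin, and reconciling their grading conventions with the bigrading in \eqref{eq:degrees}; the formality or freeness of the relevant complex (so that only the algebra $R_{\mathbf{a}}$, and no higher $A_\infty$-terms, survives) is the crux. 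The appearance of $n = -a_{d+1}$ as the number of windows should fall out of $\mu = -a_{d+1}$ once $\mathbf{a}$ is balanced, and the Koszul-duality statement Proposition~\ref{prop:duality} provides a useful consistency check that the identification is compatible with the $\mathbf{a} \leftrightarrow -\mathbf{a}$ symmetry.
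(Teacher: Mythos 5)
Your proposal follows essentially the same route as the paper's proof: set up the VGIT data with $Y=\mathbb{C}^{d+1}$, $G=\mathbb{C}^*$, compute $\mu=-a_{d+1}$ from the balancing condition so that Theorem~\ref{thm:bfk} yields $-a_{d+1}$ exceptional objects coming from the point quotient $[Y^\lambda \sslash G_\lambda]$, pass to the window subcategory of $D^{eq}(\mathbb{C}^{d+1})$, and compute the endomorphism algebra (and its formality) via the Koszul resolution, matching weights to identify $E_j$ with $R_{\mathbf{a}}(j)$. Two small precision points: in the window the objects are the twisted structure sheaves $\mathcal{O}_{B_-}(k)$ rather than restrictions of line bundles on $X_+^{\mathbf{a}}$ (these coincide only when the signature is $(d,0)$, i.e. $B_-=\mathbb{C}^{d+1}$), and the independence of $\nu(d+1)$ holds not by a re-indexing argument but because $v_{d+1}$ has weight $-a_{d+1}>n-1$, so no morphism involving it ever appears between objects of the collection.
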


\begin{proof}
	We first identify the stacks $X_\pm$ introduced above with those using the notation of Theorem~\ref{thm:bfk}. Our space $X = \mathbb{A}^{d + 1}$ equals $Y$ and the group $G$ acting on $X$ is simply $\mathbb{C}^*$ with the one parameter subgroup $\lambda : \mathbb{G}_m \to G$ as the identity. The $G$-equivariant line bundles $\mathcal{L}_+$ and $\mathcal{L}_-$ correspond to any positive and negative characters of $\mathbb{C}^*$, respectively. It is easy to see that $B_\pm = S^\lambda_\pm$. As $G$ is abelian, its centralizer $C (\lambda ) = G = \mathbb{C}^*$ while its quotient $G_\lambda = \{1\}$.  As $a_i \ne 0$ for all $i$, the fixed locus of the $\mathbb{C}^*$-action on $X$ is simply $Z_\lambda = (0, \ldots, 0) = Y^\lambda$ and thus its quotient $Y^\lambda \sslash G_\lambda$ also equals a point. Furthermore, the parameter $\mu$ which is the weight of $\lambda$ acting on the anti-canonical bundle of $X$ along $Z_\lambda^0$ is given by
	\begin{align*} t \cdot \textnormal{d}z_0 \wedge \cdots \wedge \textnormal{d} z_{d } & = \textnormal{d}(t^{a_0} z_0) \wedge \cdots \wedge \textnormal{d}(t^{a_{d }} z_{d }) ,  \\ & = t^{a_0 + \cdots + a_{d}} \left(  \textnormal{d}z_0 \wedge \cdots \wedge \textnormal{d} z_{d } \right), \\ & = t^{-a_{d + 1}} \left(  \textnormal{d}z_0 \wedge \cdots \wedge \textnormal{d} z_{d} \right). \end{align*}
	Here we utilized the balancing condition~\eqref{eq:balance} and have $\mu = - a_{d + 1} > 0$.
	
	For any $k \in \mathbb{Z}$, let $\mathcal{O}_{X_\pm} (k)$ be the line bundle on $X_\pm$ obtained from the equivariant line bundle on $X \backslash B_\pm$ with character $k$. By Theorem~\ref{thm:bfk}, the  functor $\Upsilon_k^+ : D (Y^\lambda \sslash G_\lambda) \to D (X_+ )$ for any $k \in \mathbb{Z}$ is defined by sending the structure sheaf $\mathcal{O}_{pt}$ over the point $Y^\lambda \sslash G_{\lambda}$ to the structure sheaf $\mathcal{O}_{[B_- \sslash \lambda]} \otimes \mathcal{O}_{X_+} (k)$. This functor is fully faithful (as $Y^\lambda \sslash G_\lambda$ is a point) and Theorem~\ref{thm:bfk} asserts that
	\begin{align*} \mathcal{B} := \{ \Upsilon^+_{0} (\mathcal{O}_{pt}), \ldots,  \Upsilon^+_{-a_{d + 1} - 1} (\mathcal{O}_{pt}) \}
	\end{align*}
	is isomorphic to a full exceptional collection $\mathcal{E} = \{E_0, \ldots, E_{a_0 - 1}\}$ for $\mathcal{T}^B_{\mathbf{a}}$. Furthermore, application of \cite[Corollary~3.4.8]{bfk} shows that the sheaves $\Upsilon^+_{k} (\mathcal{O}_{pt})$ can be replaced with sheaves $\mathcal{O}_{B_-} (k)$ in the so-called window subcategory of $D ([X / G]) = D^{eq} (X)$. We use this, along with some elementary observations about Ext-groups to compute the $A_\infty$-algebra associated to $\mathcal{B}$.
	
	First, since $X = \mathbb{C}^{d +1}$ is affine, we have that $D^{eq} (\mathbb{C}^{d+1})$ is the homotopy category of the DG category of chain complexes in the category of finitely generated, graded $S = \mathbb{C}[z_0, \ldots, z_{d}]$-modules. Take $W$ to be the bigraded vector space generated by $\{w_i : a_i < 0 \}$ with $|w_i| = (\deg (w_i) , \wt (w_i) ) = (1, a_i)$. Then grading $S \otimes_{\mathbb{C}} \symalg{W} $ by degree, the differential $d$ defined by multiplication by $ \sum_{a_i < 0} z_i w_i$ yields the Koszul complex resolving $\mathcal{O}_{B_-}[\sum_{a_i < 0} a_i]$. Here again, $\symalg{W}$ is meant to be the graded-symmetric algebra of a graded vector space, and in this case is simply an exterior algebra. Using this resolution, and assuming $\nu = 0$, one computes cohomology to obtain an isomorphism
	\begin{align*}
		\Ext_{S}^* (\mathcal{O}_{B_-} , \mathcal{O}_{B_-} ) & = R_{\mathbf{a}, \nu}.
	\end{align*}
	This isomorphism respects the weight grading. Furthermore, the equivalence is induced by a quasi-isomorphism of differential graded algebras.
	This implies that 
	\begin{align*} 
		\Ext^* \left( E_i, E_j \right) & \cong \Ext^* \left( \mathcal{O}_{B_-} (i) , \mathcal{O}_{B_-} (j) \right), \\
		& = \{ v \in R_{\mathbf{a}, \nu} : \wt (v) = j - i \}, \\ & = \Hom_{\mathcal{C}_{\mathbf{a}, \nu, - a_{d + 1}} }\left( R_{\mathbf{a}} (i), R_{\mathbf{a}}(j) \right).
		\end{align*}

	As these isomorphisms are compatible with the DG structure and composition, this implies that $\mathcal{B}$ is a formal exceptional collection and that $\mathcal{T}^B_{\mathbf{a}} \cong \mathcal{D}_{\mathbf{a}, \nu , -a_{d + 1}}$. Finally, to see that we may allow $\nu ({d +1})$ to equal any value, observe that, since $\mu = |\mathcal{B}| < - a_{d + 1}$, there is no morphism corresponding to  $v_{d + 1}$ between objects in the collection $\mathcal{B}$, or in $\mathcal{B}_{\mathbf{a}, - a_{d + 1}}$. 
\end{proof}
\begin{example} The most elementary non-trivial example of $\mathbf{a} = (1, 1, -2)$ yields the classical  Beilinson~\cite{beilinson} exceptional collection $\mathcal{O}$ and $\mathcal{O} (1)$. Theorem~\ref{thm:Bmodel} gives the following table.
	\begin{table}[h] \def\arraystretch{1.5} \begin{tabular}{c | c | c | c}  $\mathbf{a}$ & $R_{\mathbf{a}}$ & $X_+ / \mathbb{C}^*$ & $X_- / \mathbb{C}^*$  \\ \hline $(1,1,-2)$ & $\textnormal{Sym}^* [v_0, v_1] \otimes \bigwedge^* (v_2)$ & $\mathbb{P}^1$ & $ \emptyset$ \end{tabular} \end{table}
	\begin{figure}[h]
		\begin{tikzpicture}[cross line/.style={preaction={draw=white, -, line width=6pt}}, scale=2.3]
		\node (A) at (0,0) {$R_{\mathbf{a}} (0)$};
		\node (B) at (1,0) {$R_{\mathbf{a}} (1)$};
		\path[->,font=\scriptsize]
		(A.10) edge node[above] {$v_1$} (B.170)
		(A.350) edge node[below] {$v_0$} (B.190);
		\end{tikzpicture}
		\caption{\label{fig:quiv1} Full exceptional collection for $\mathcal{D}_{\mathbf{a}, \nu, 2} \cong D (\mathbb{P}^1)$.}
	\end{figure}
\end{example}
The more general case of signature $(d, 0)$ will yield a full exceptional collection of a weighted projective space.
\begin{example} Again we choose a basic one dimensional example and observe that, while larger $|a_{d + 1}|$ provides several exceptional objects, the structure is not significantly more complicated.
	
\begin{table}[h] \def\arraystretch{1.5} \begin{tabular}{c | c | c | c}  $\mathbf{a}$ & $R_{\mathbf{a}}$ & $X_+ / \mathbb{C}^*$ & $X_- / \mathbb{C}^*$  \\ \hline $(2, 3, -5)$ & $\textnormal{Sym}^* [v_0, v_1] \otimes \bigwedge^* (v_2)$ & $\mathbb{P} (2, 3)$ & $ \emptyset$ \end{tabular} \end{table}
\begin{figure}[h]
	\begin{tikzpicture}[cross line/.style={preaction={draw=white, -, line width=6pt}}, scale=2.3]
	\node (A) at (0,0) {$R_{\mathbf{a}} (0)$};
	\node (B) at (1,0) {$R_{\mathbf{a}} (1)$};
	\node (C) at (2,0) {$R_{\mathbf{a}} (2)$};
	\node(D) at (3,0) {$R_{\mathbf{a}} (3)$};
	\node (E) at (4,0) {$R_{\mathbf{a}} (4)$};
	\path[->,font=\scriptsize]
	(A) edge[bend left=17] node[above] {$v_0$} (C)
	(B) edge[cross line, bend left=17] node[above] {$v_0$} (D)
	(A) edge[bend left=30] node[above] {$v_1$} (D)
	(C) edge[cross line, bend left=17] node[above] {$v_0$} (E)
	(B) edge[cross line, bend left=30] node[above] {$v_1$} (E);
	\end{tikzpicture}
		\caption{\label{fig:quiv2} Full exceptional collection for $\mathcal{D}_{\mathbf{a}, \nu, 5} \cong D (\mathbb{P} (2, 3))$.}
\end{figure}

\end{example}

Finally, we describe an elementary birational cobordism with $(p, q) \ne (d , 0)$.

\begin{example}
As was mentioned in the introduction, elementary birational cobordisms describe blow-ups of points and flips, as well as projective spaces. When the signature $(p,q)$ is $(d-1, 1)$, the cobordism produces the weighted blow-up of a point. In this case, the category $\mathcal{D}_{\mathbf{a}, \nu, n}$ does not describe the entire derived category $D (X_+)$, but rather the semi-orthogonal component $\mathcal{T}_{\mathbf{a}}^B$ obtained from blowing up. The associated sheaves are equivariant structure sheaves of the (stacky) exceptional divisor tensored with a line bundle.

\begin{table}[h] \def\arraystretch{1.5} \begin{tabular}{c | c | c | c}  $\mathbf{a}$ & $R_{\mathbf{a}}$ & $X_+ / \mathbb{C}^*$ & $X_- / \mathbb{C}^*$  \\ \hline $(1, 2, 3, -1, -5)$ & $\textnormal{Sym}^* [v_0, v_1, v_2] \otimes \bigwedge^* ( v_3, v_4 )$ & $\mathcal{O}_{\mathbb{P} (1,2,3)}(-1)$ & $\mathbb{C}^3$ \end{tabular} \end{table} 
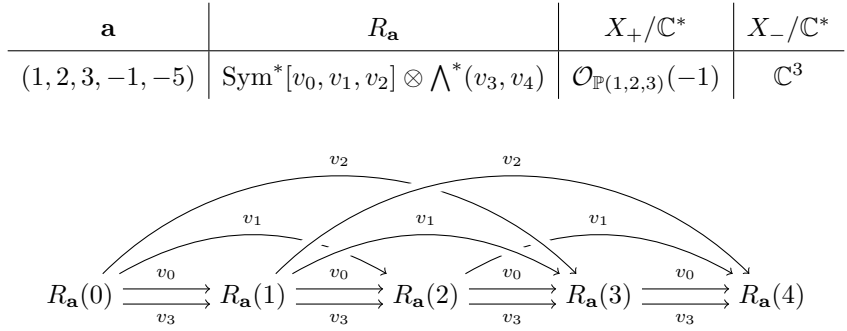
\begin{figure}[h]
	\begin{tikzpicture}[baseline=(current  bounding  box.center), cross line/.style={preaction={draw=white, -, line width=6pt}}, scale=2.3]
	\node (A) at (0,0) {$R_{\mathbf{a}} (0)$};
	\node (B) at (1,0) {$R_{\mathbf{a}} (1)$};
	\node (C) at (2,0) {$R_{\mathbf{a}} (2)$};
	\node (D) at (3, 0) {$R_{\mathbf{a}} (3)$};
	\node (E) at (4, 0) {$R_{\mathbf{a}} (4)$};
	\path[->,font=\scriptsize]
	(A.10) edge node[above] {$v_0$} (B.170)
	(B.10) edge node[above] {$v_0$} (C.170)
	(C.10) edge node[above] {$v_0$} (D.170)
	(D.10) edge node[above] {$v_0$} (E.170)
	(A.350) edge node[below] {$v_3$} (B.190)
	(B.350) edge node[below] {$v_3$} (C.190)
	(C.350) edge node[below] {$v_3$} (D.190)
	(D.350) edge node[below] {$v_3$} (E.190)
	(A) edge[bend left=30] node[above] {$v_1$} (C)
	(C) edge[bend left=30] node[above] {$v_1$} (E)
	(B) edge[cross line, bend left=30] node[above] {$v_1$} (D)
	(A) edge[cross line, bend left=45] node[above] {$v_2$} (D)
	(B) edge[cross line, bend left=45] node[above] {$v_2$} (E);
	\end{tikzpicture} 
	\caption{\label{fig:quiv3} Full exceptional collection for $\mathcal{D}_{\mathbf{a}, \nu, 5} \cong \mathcal{T}_{\mathbf{a}}^B$.}
\end{figure}
\end{example}

\section{$A$-model} \label{sec:A}

In this section we analyze the $A$-model mirror to the birational cobordism discussed in Section~\ref{sec:bircob}. 

\subsection{Fukaya-Seidel categories over $\mathbb{C}^*$}
We first consider the problem of defining a Fukaya-Seidel category $\mathcal{T}^A_{\mathbf{a}}$ associated to a mirror potential with values in $\mathbb{C}^*$. We will review and establish some of the notation and constructions in Fukaya-Seidel categories which are particularly important for the main results of this article. The seminal reference for the definition and basic properties of the usual Fukaya-Seidel category  is \cite{seidel} (where it is called the directed Fukaya category) and we do not intend to posit a unique definition. However, as the base of the LG model $W_\mathbf{a}$ is not simply connected, we will have to provide some additional data to obtain a well defined category as in \cite{kerr}. We will do this for a slightly broader class of potentials than the mirror potential $W_{\mathbf{a}}$. For the rest of this section, assume $E$ is a $d$-dimensional K\"ahler manifold with a nowhere-zero holomorphic volume form $\eta_E$, $S$ is a Riemannian surface and $\pi : E \to S$ is a Lefschetz fibration. We will write $\eta^2_E$ for a quadratic volume form on $E$ which allows Lagrangian submanifolds to be graded. Denote the critical points of $\pi$ by $\cp{\pi}$ and the critical values by $\cv{\pi}$.  Given a base point $* \in S$, a $\pi$\textbf{-admissible path} $\delta : [0, 1] \to S$ is a smoothly embedded curve with $\delta (0) = *$, $\delta (t) \not\in \cv{\pi}$ for all $t \ne 1$ and $\delta (1) \in \cv{\pi}$. 

Away from the critical values of $\pi$, one can define a symplectic connection on $E$ by taking the symplectic orthogonal of the tangent spaces to the fibers as the horizontal distribution. In particular, for any path $\gamma : [0, b) \to S \backslash \cv{\pi}$, and any $0 \leq t < b$ one can perform symplectic parallel transport 
\begin{align*} \mathbf{P}_{\gamma, t} : \pi^{-1} (\gamma (0)) \to \pi^{-1} (\gamma (t)). \end{align*}
Given a $\pi$-admissible path $\delta$ for which the critical point $p$ lies above $\delta (1)$, one defines the \textbf{vanishing cycle} and the \textbf{vanishing thimble} of $\delta$ as 
\begin{align} 
 \label{eq:vc}\vc_\delta &:= \left\{ e \in \pi^{-1} (*) : \lim_{t \to 1} \mathbf{P}_{\delta, t} (e) = p \right\}, \\
 \label{eq:vt}\vt_\delta & := p \cup \left( \cup_{t \in [0, 1)} \mathbf{P}_{\delta, t} (\vc_\delta ) \right).
\end{align}
Using the fact that $\pi$ is a Lefschetz fibration, there is a local model near the critical points of $\pi$ and one can show that $\vc_\delta$ is an exact Lagrangian $(d -1)$-sphere bounding the Lagrangian ball $\vt_\delta$. 

Given a Lagrangian thimble $\vt \subset E$, one may consider a branched double cover of $E$ and produce a sphere inside this cover by gluing two copies of $\vt$ along the branch locus. Precisely, if $f : \tilde{S} \to S$ is a branched cover of Riemann surfaces with ordinary double points we take $f^* \pi : f^*E \to \tilde{S}$ to be the pullback of $\pi : E \to S$. Then $f^* \pi$ is a Lefschetz fibration and, if the conditions spelled out below are satisfied, its construction allows one to define exact Lagrangian $d$-spheres in $f^* E$.

\begin{definition} \label{def:vs} Given  a Lefschetz fibration $\pi : E \to S$ and a map $f : \tilde{S} \to S$ with ordinary double points, a $\pi$-admissible path $\delta$ is called $(\pi, f)$-admissible if $\delta (t) \in \cv{f}$ exactly when $t = 0$. Write $\vs_\delta^f \subset f^* E$ for the Lagrangian sphere equal to the pullback $f^* \vt_{\delta}$.
\end{definition} 
%EDIT: The data of f^* \eta must be established here!! 

A set $\mathcal{B} = \{\delta_1, \ldots , \delta_m\}$ of $\pi$-admissible paths will be called a $\pi$\textbf{-admissible collection} if 
\begin{enumerate}
	\item for $i \ne j$,  $\delta_i (t) = \delta_j (s)$ if and only if $t = 0 = s$,
	\item $\delta_1^\prime (0), \ldots, \delta_m^\prime (0)$ is ordered clockwise about $*$.
\end{enumerate}
When $S$ is simply connected and $\cv{\pi} = \{\delta_1 (1), \ldots, \delta_m (1)\}$, we say that $\mathcal{B}$ is a $\pi$\textbf{-distinguished basis}. In this case, the Fukaya-Seidel category of $\pi$ can be defined using the Fukaya category of the fiber $\pi^{-1} (*)$ along with the ordering of the paths in $\mathcal{B}$. This is done using the general construction given in Definition~\ref{def:directedcategory} of a directed subcategory of a given $A_\infty$ category $\mathcal{C}$. For each path $\delta_i \in \mathcal{B}$, we decorate $\vc_{\delta_i}$ with a grading and relative Pin structure to obtain a Lagrangian brane $L_i$ in the Fukaya category of the fiber $\mathcal{F} (\pi^{-1} (*))$. Let $\mathbf{B} = \{L_1, \ldots, L_m\} \subset \mathcal{F} (\pi^{-1} (*))$ be the ordered collection of Lagrangian branes.

\begin{definition} \label{defn:fscat}
	If $S$ is simply connected, the Fukaya-Seidel category $\mathcal{F} (\pi)$ of $\pi$ is the category of twisted complexes $\textnormal{Tw} (\mathcal{F} (\pi^{-1} (*))_\mathbf{B})$. 
\end{definition}

In fact, this is but one of several equivalent formulations of $\mathcal{F} (\pi )$.  It is a non-trivial fact \cite[Theorem~17.20]{seidel} that $\mathcal{F} (\pi)$ does not depend on the choice of distinguished basis. Moreover, noting that the objects $L_i \in \mathcal{F} (\pi)$ form a full exceptional collection, it was shown in loc. cit. that different choices of bases are related algebraically through mutations of exceptional collections.

To provide a definition for the case of a non-simply connected surface $S$, we take the easiest approach and equip ourselves with the additional data of a simply connected region $S^\circ$ in $S$ which contains $\cv{\pi}$. Our category is then  
\begin{align*}  \mathcal{F} (\pi, S^\circ) := \mathcal{F} (\pi |_{\pi^{-1}(S^\circ)}) . \end{align*}  In general, making different choices of domains $S^\circ$ will drastically alter the category. However, for certain potentials over $S = \mathbb{C}^*$, we will see that this choice does not have such an effect.  We now restrict to the case of $S = \mathbb{C}^*$. 
\begin{definition}
	A Lefschetz fibration $\pi : E \to \mathbb{C}^*$ is called atomic if it has a unique critical point.
\end{definition}

Given an atomic Lefschetz fibration $\pi$, we will denote its critical point by $p$ and its critical value by $q$. 

\begin{example} \label{eg:hmsp}
	One of the most important examples of an atomic LG model comes from a quotient of the mirror potential to a weighted projective space $\mathbb{P} (a_0, \ldots, a_d)$. We recall the construction of the mirror potential from \cite{givental, hv}. Let $\bar{\mathbf{a}} = \left(a_0, \ldots, a_d\right)$ and consider the subtorus
\begin{align}\label{eq:weightedprojmirror}
G_{\bar{\mathbf{a}}} := \left\{ (z_0, \ldots, z_d) \in (\mathbb{C}^*)^{d + 1}: \prod_{i = 0}^d z_i^{a_i} = 1 \right\}. 
\end{align}
Then the mirror potential $\mathbf{w}_{\bar{\mathbf{a}}} : G_{\bar{\mathbf{a}}} \to \mathbb{C}$ to $\mathbb{P} (a_0, \ldots, a_d)$ is the restriction of $\mathbf{w} = z_0 + \cdots + z_d$ to $G_{\bar{\mathbf{a}}}$. Letting $n = \sum_{i = 0}^d a_i$, one can easily compute that the critical points of $\mathbf{w}_{\bar{\mathbf{a}}}$ are 
\begin{align*}
\cp{\mathbf{w}_{\bar{\mathbf{a}}}} & = \left\{ (a_0 t, \ldots, a_d t) : t^{n} = \prod_{i = 0}^d a_i^{-a_i} \right\}.
\end{align*}
	
Note that there are $n$ critical points and that the diagonal action of the group of $n$-th roots of unity $\units_n$ on $G_{\bar{\mathbf{a}}}$ restricts to a transitive action on $\cp{W_{\bar{\mathbf{a}}}}$. Also note that $\mathbf{w}_{\bar{\mathbf{a}}}$ is equivariant with respect to this action. Thus we may quotient both the total space and the base to obtain 
\begin{align} \label{eq:barpi}
\bar{\pi} : \frac{G_{\bar{\mathbf{a}}}}{\units_n} \to \frac{\mathbb{C}}{\units_n}.
\end{align}
The function $\bar{\pi}$ has exactly one critical point away from the zero fiber where the action on $\mathbb{C}$ is fixed point free. Over zero $\bar{\pi}$ is branched and therefore singular. However, if we excise the zero fiber, we obtain the atomic LG model
\begin{align*}
\pi : \frac{G_{\bar{\mathbf{a}}} \backslash (\mathbf{w}_{\bar{\mathbf{a}}}^{-1} (0)) }{\units_n} \to \mathbb{C}^*.
\end{align*}
\end{example}

\begin{figure}
\includegraphics[scale=.9]{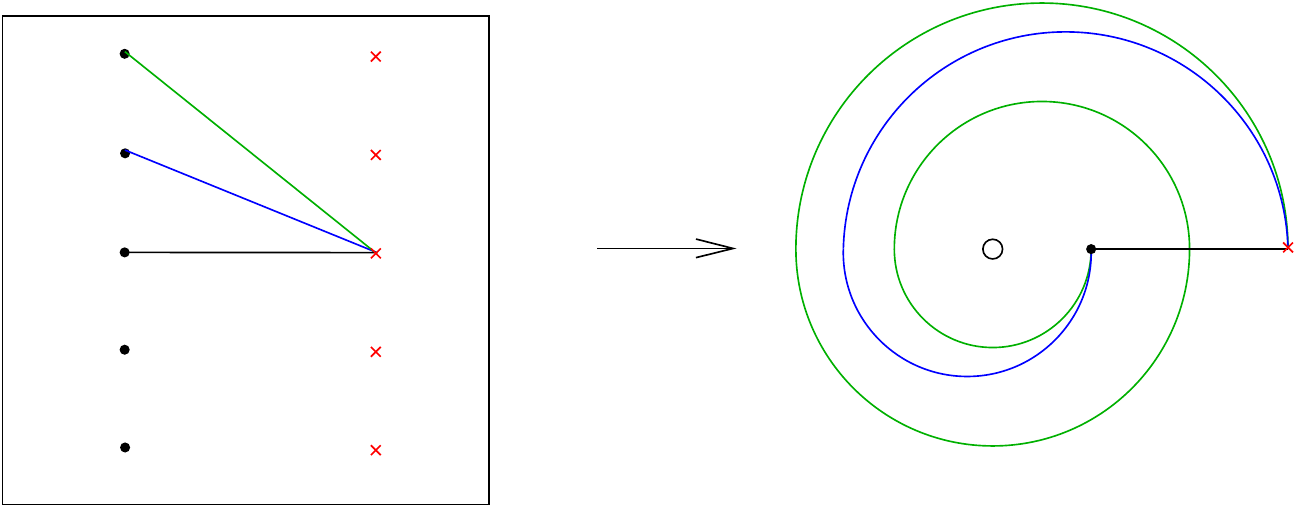}
\caption{\label{fig:paths} Three logarithmic paths and their images in $\mathbf{B}_*$.}
\end{figure}

Returning to the general case of an atomic Lefschetz fibration $\pi : E \to \mathbb{C}^*$, choose a base point $* = e^u q$ so that $\Re (u) \ne 0$. By choosing different logarithms $u$ of $* / q$, we may index the set of isotopy classes 
\begin{align*} \left\{ [\delta^{u + 2  \pi i k} :  k \in \mathbb{Z} \right\}
 \end{align*} of $\pi$-admissible paths from $*$ to $q$ where
\begin{align} \label{eq:deltadef}
\delta^u (s) = e^{(1 - s)u} q .
\end{align}
It is sometimes useful to vary the basepoint $u$, so we will frequently use the notation in equation \eqref{eq:deltadef}. However, when we fix a base point $* = e^u q$, we will take $u_* \in \mathbb{C} $ be the unique logarithm of $*/ q$ for which $ 0 \leq \Im (u_* ) < 2 \pi$. We write $\delta_k$ for $\delta^{u_* + 2\pi i k}$. Three of the logarithmic paths $\ln (\delta^u)$ and their images are illustrated in Figure~\ref{fig:paths}.

\begin{definition}
	Given an atomic Lefschetz fibration $\pi : E \to \mathbb{C}^*$, the $n$-unfolded Fukaya-Seidel category of $\pi$ is the category $\mathcal{F} (\pi^{1/n} ) := \mathcal{F} (\tilde{\pi}, S^\circ )$ where $\tilde{\pi}$ is the pullback of $\pi$ along $z \mapsto z^n$ and $S^\circ = \mathbb{C}^* \backslash \left( e^{i \theta } \cdot \mathbb{R}_{> 0} \right)$ for $n \theta \not\equiv \arg (q ) \pmod{2\pi}$.
\end{definition}
As the next proposition indicates, the $n$-unfolded category of $\pi$ is independent of the choice of cut and can be computed directly from the vanishing cycles associated to $\delta_k$. To make this computation, we first choose a natural distinguished basis of paths and fix a coherent brane structure on their associated vanishing cycles. First we equip the vanishing thimble $\vt_{\delta_k} \in \pi^{-1} (*)$ with an arbitrary Lagrangian brane structure. This induces a brane structure on $\vc_{\delta_k}$ and we write the associated object as $L_k \in \mathcal{F} (\pi^{-1} (*))$. For $j \in \mathbb{N}$, we then equip $\vc_{\delta_{k + j}}$ with a brane structure induced by the one on $L_k$. This can be done by performing symplectic monodromy around $0$ exactly $j$ times, which is a graded symplectomorphism.  Write
\begin{align} \label{eq:naturalcol}
\mathbf{B} (k ) = \{L_k, L_{k + 1} , \ldots, L_{k + n - 1}\} 
\end{align}
for the associated collection of objects in $\mathcal{F} (\pi^{-1} (*))$.

\begin{proposition} \label{prop:unfoldedfuk} For any $k \in \mathbb{Z}$, there is an equivalence of categories \begin{align*} \mathcal{F} (\pi^{1/n}) \cong \textnormal{Tw} (\mathcal{F} (\pi^{-1}(*))_{\mathbf{B}(k)}). \end{align*}
\end{proposition}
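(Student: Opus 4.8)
The plan is to reduce the statement to Seidel's theorem that the Fukaya–Seidel category of a Lefschetz fibration over a simply connected base is independent of the choice of distinguished basis, together with an explicit identification of the vanishing cycles in the pullback $\tilde\pi$ along $z \mapsto z^n$. First I would unwind the definition: $\mathcal{F}(\pi^{1/n}) = \mathcal{F}(\tilde\pi, S^\circ) = \mathcal{F}(\tilde\pi|_{\tilde\pi^{-1}(S^\circ)})$, where $S^\circ = \mathbb{C}^* \setminus (e^{i\theta}\cdot\mathbb{R}_{>0})$ is simply connected and contains all of $\cv{\tilde\pi}$. The critical values of $\tilde\pi$ are precisely the $n$-th roots of the critical value $q$ of $\pi$, so there are exactly $n$ of them; restricting $\tilde\pi$ over the cut plane $S^\circ$ gives a genuine Lefschetz fibration over a simply connected surface, and Definition~\ref{defn:fscat} applies. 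The content is then to exhibit a $\tilde\pi$-distinguished basis whose associated collection of branes in the fiber is exactly $\mathbf{B}(k)$.

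The key geometric observation is that the $n$ paths $\delta_k, \delta_{k+1}, \ldots, \delta_{k+n-1}$ downstairs, which are the logarithmic arcs $\delta^{u_* + 2\pi i j}$ from the fixed basepoint $*$ to $q$, lift under $z \mapsto z^n$ to $n$ paths in $S^\circ$ from a chosen lift $\tilde *$ of $*$ to the $n$ distinct preimages of $q$. One checks that for a suitable choice of the cut angle $\theta$ (any $\theta$ with $n\theta \not\equiv \arg(q) \pmod{2\pi}$, as in the definition) these lifts stay inside $S^\circ$, are pairwise disjoint except at $\tilde *$, and — this is the point requiring care — their initial velocity vectors at $\tilde *$ are cyclically ordered, hence after fixing a starting index can be reordered clockwise to form a $\tilde\pi$-distinguished basis in the sense of the conditions preceding Definition~\ref{defn:fscat}. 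Because $\tilde\pi$ is a pullback, the fiber $\tilde\pi^{-1}(\tilde *)$ is canonically identified with $\pi^{-1}(*)$, and the vanishing cycle of the lift of $\delta_j$ is identified with $\vc_{\delta_j} \subset \pi^{-1}(*)$; moreover the brane structures transport correctly because the grading and relative Pin structure on $\vc_{\delta_{k+j}}$ were defined in \eqref{eq:naturalcol} precisely by applying $j$-fold symplectic monodromy, which on the nose matches the comparison of the lifted thimbles. Thus the distinguished basis of $\tilde\pi|_{S^\circ}$ built from these lifts yields exactly the ordered collection $\mathbf{B}(k)$, and Definition~\ref{defn:fscat} gives $\mathcal{F}(\tilde\pi, S^\circ) \cong \textnormal{Tw}(\mathcal{F}(\pi^{-1}(*))_{\mathbf{B}(k)})$.

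Finally I would address independence of $k$ and of the cut. Different values of $k$ correspond to rotating the basepoint lift $\tilde *$ around the branch point $0$, which permutes the $n$ preimages of $q$ cyclically; the resulting distinguished bases differ by the global monodromy and by mutations, so by \cite[Theorem~17.20]{seidel} the categories of twisted complexes are equivalent — this also reconfirms that the category does not depend on the position of the cut $e^{i\theta}\cdot\mathbb{R}_{>0}$, since moving the cut across the base point or across a critical value again just implements a mutation or a relabeling of the distinguished basis. I expect the main obstacle to be the careful bookkeeping in the second paragraph: verifying that the lifted paths genuinely form a distinguished basis (disjointness, the clockwise ordering of initial vectors, and staying within the chosen simply connected domain $S^\circ$) and, most delicately, that the transported brane structures agree with the ones specified via iterated monodromy in \eqref{eq:naturalcol} rather than differing by a shift or a twist. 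Everything else is a direct application of the definitions and of Seidel's invariance theorem.
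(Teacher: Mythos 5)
Your proposal is correct and follows essentially the same route as the paper's proof: lift the logarithmic paths $\delta_j$ under $z \mapsto z^n$ into the cut plane $S^\circ$ to get a $\tilde{\pi}$-distinguished basis, use the pullback identification of $\tilde{\pi}^{-1}(\tilde{*})$ with $\pi^{-1}(*)$ (and of the symplectic connections) to match the vanishing cycles and brane structures with $\mathbf{B}(k)$, and then apply Definition~\ref{defn:fscat}. The only cosmetic difference is at the last step: where you invoke mutations and \cite[Theorem~17.20]{seidel} to compare different $k$, the paper argues more directly that the monodromy about $0$ is a graded symplectomorphism of $\pi^{-1}(*)$ carrying $\mathbf{B}(k)$ to $\mathbf{B}(k+1)$, so the directed subcategories are identified on the nose.
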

\begin{proof}
	Consider the pullback $\tilde{\pi}$ of $\pi$ along $z \mapsto z^n$.
\begin{equation} \label{eq:pullback}
\begin{tikzpicture}[baseline=(current  bounding  box.center), scale=1.5]
\node (A) at (0,1) {$\tilde{E}$};
\node (B) at (1,1) {$E$};
\node (C) at (0,0) {$\mathbb{C}^*$};
\node (D) at (1,0) {$\mathbb{C}^*$};
\path[->,font=\scriptsize]
(A) edge node[above]{$\phi$} (B)
(A) edge node[left]{$\tilde{\pi}$} (C)
(B) edge node[right]{$\pi$} (D)
(C) edge node[above]{$z^n$} (D);
\end{tikzpicture} 
\end{equation}	
	The critical values of $\tilde{\pi}$ are the $n$-th roots of $q$. For any choice of $\theta$, let $\tilde{q}$ be the $n$-th root which follows $e^{i\theta}$ clockwise and take $\tilde{*} = e^{u} \tilde{q}$ for $u \in \mathbb{R}_{< 0}$. Using unique path lifting of $z \mapsto z^n$ with respect to the basepoint $*$ lifting to $\tilde{*} \in \mathbb{C} \backslash e^{i \theta} \mathbb{R}_{> 0}$, one obtains a unique lift $\tilde{\delta}_i : [0,1] \to \mathbb{C}^*$ for every $i$. It is clear that $\tilde{\delta}_i \subset \mathbb{C} \backslash e^{i\theta} \mathbb{R}_{> 0}$ if $0 \leq i < n$, so that $\tilde{\mathcal{B}} = \{\tilde{\delta}_0, \ldots, \tilde{\delta}_{n - 1}\}$ is a distinguished basis of paths for $\tilde{\pi}$. Take $\tilde{\mathbf{B}}$ to be the respective collection of vanishing cycles in $\mathcal{F} (\tilde{\pi}^{-1} (\tilde{*}))$. Note that $\phi$ yields a symplectomorphism from  $\tilde{\pi}^{-1} (\tilde{*})$  to $\pi^{-1} (*)$ and the symplectic connection of $\pi$ pulls back to that of $\tilde{\pi}$. Thus the vanishing cycle $\vc_{\tilde{\delta}_i}$ is mapped isomorphically to $\vc_{\delta_i}$ via $\phi$ so that $\phi$ induces an equivalence of categories from $\mathcal{F} (\tilde{\pi}^{-1} (\tilde{*}))$ to $\mathcal{F} (\pi^{-1} (*))$ taking $\tilde{\mathbf{B}}$ to $\mathbf{B}$ for $k = 0$. The result for $k = 0$ then follows  from Definition~\ref{defn:fscat}. For general $k$, we note that monodromy about $0$ induces an symplectomorphism from $\pi^{-1} (*)$ to itself which carries $\vc_{\delta_i}$ to $\vc_{\delta_{i + 1}}$ so that, for varying $k$, the associated directed subcategories with respect to $\mathbf{B}$  are equivalent.
\end{proof}

The class of atomic LG models that we consider arises from pencils on K\"ahler varieties. We utilize this additional structure and make a definition.

\begin{definition} \label{def:atomLP}
	Let $X$ be a projective variety with ample line bundle $\mathcal{L}$. Given two linearly equivalent, effective divisors $D_0$, $D_\infty$ such that $D_0 \cup D_\infty$ is a normal crossing divisor, we say the pencil \[[s_{D_0}: s_{D_\infty}] : X \backslash (D_0 \cap D_\infty) \to \mathbb{P}^1 \] is an atomic Lefschetz pencil if its restriction to $E = X \backslash (D_0 \cup D_\infty)$ is an atomic Lefschetz fibration.
\end{definition}

%Throughout this paper a symplectic Lefschetz fibration will be allowed to be an open symplectic manifold that has a reasonable subdomain which is a symplectic
%Note that, utilizing the normal crossing condition, one may blow up the base loci of the pencil and excise a tubular neighborhood of the exceptional locus to obtain a
When an atomic Lefschetz fibration arises as an atomic Lefschetz pencil, one may define and utilize additional invariants. Assume that, for $i = 0$ or $\infty$,  $D_i = \sum_{j = 1}^{r_i} b_j D_i^j$ where $D_i^j$ is an irreducible component of $D_i$. Let $D_i^\circ = D_i \backslash (D_0 \cap D_\infty)$ and, for any $n \in \mathbb{N}$, extend the pullback in equation~\eqref{eq:pullback} 
\begin{equation} \label{eq:pullback2}
\begin{tikzpicture}[baseline=(current  bounding  box.center), scale=1.5]
\node (A) at (0,1) {$\tilde{E}_0$};
\node (B) at (1,1) {$E \cup D_0^\circ$};
\node (C) at (0,0) {$\mathbb{C}$};
\node (D) at (1,0) {$\mathbb{C}$};
\path[->,font=\scriptsize]
(A) edge node[above]{$\phi$} (B)
(A) edge node[left]{$\tilde{\pi}_0$} (C)
(B) edge node[right]{$\pi$} (D)
(C) edge node[above]{$z^{n}$} (D);
\end{tikzpicture} 
\end{equation}
 by adding $D_0^\circ$. Observe that for $n = b_j$,  $\tilde{\pi}_0$ is regular on the pullback of $D_0^j \backslash  (D_0^j \cap D_\infty)$ in the partial compactification  $\tilde{E}_0$ of $\tilde{E}$. Now, let 
 \[S = \{ z : z \in \mathbb{C}, |z| \leq |*|, z \ne * \} \] be the disc with boundary and $*$ removed as in the right hand side of  Figure~\ref{fig:bondcond}. We equip $S$ with a strip like end near $*$ which is a trivialization $\epsilon : \mathbb{R}_{> 0} \times [0,1] \to S$ for which $\lim_{s \to \infty} \epsilon (s, t) = *$ and $\epsilon (s, i) \in \partial S$ for $i \in \{0,1\}$ (see \cite[Section~8d]{seidel}).
 Let $\tilde{E}_S = \tilde{\pi}_0^{-1} (S)$ and $F$ be the union of the counter-clockwise parallel transports of $\vc_{\delta_0}$ along the boundary of $S$. So $\tilde{\pi}_0 : F \to \partial S$ gives a Lagrangian boundary condition for the fibration $\tilde{\pi}_0 : \tilde{E}_S \to S$ and one can consider the zero dimensional component $\mathcal{M}_{n}(\pi)$ in the moduli space  of sections of
 \begin{align} 
 \label{eq:totalsec} \tilde{\pi}_0 : (\tilde{E}_S, F) \to (S, \partial S).
 \end{align} 
For any such section $u : S \to \tilde{E}_S$, we have that, $\lim_{s \to \infty} u \circ \varepsilon (s, t) \in \vc_{\delta_0} \cap \tilde{\vc{}} = CF^* (\vc_{\delta_0} , \tilde{\vc})$ where $\tilde{\vc}$ is Hamiltonian isotopic to $\vc_{\delta_n}$. Moreover, for every $1 \leq j \leq r_0$ we define a subspace of $\mathcal{M}_n (\pi)$ which isolates those sections for which $u (0) \in D_0^{j}$. Precisely, for $p \in \mathcal{V}_{\delta_0} \cap \tilde{\mathcal{V}}$, $1 \leq j \leq n$ we take 
\[\mathcal{M}^{p, j}_{n} (\pi ) = 
\left\{ u \in \mathcal{M}_n (\pi ) :  u(0) \in D_0^j, \lim_{s \to \infty} u (\varepsilon (s, t)) = p  \right\} .\] Then we obtain the partition 

\begin{figure}[t]
\begin{picture}(0,0)%
\includegraphics{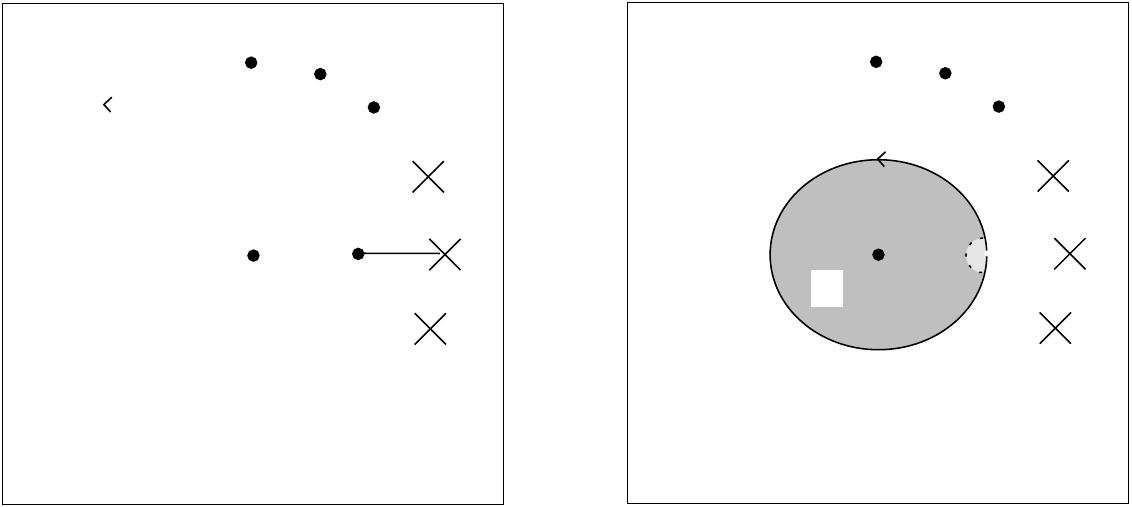}%
\end{picture}%
\setlength{\unitlength}{3947sp}%
\begin{picture}(5430,2432)(889,-2177)
\put(4792,-1190){\makebox(0,0)[lb]{\smash{$S$}}}
\put(2476,-886){\makebox(0,0)[lb]{\smash{$*$}}}
\put(5642,-880){\makebox(0,0)[lb]{\smash{$\vc_0$}}}
\put(5657,-1201){\makebox(0,0)[lb]{\smash{$\tilde{\vc}$}}}
\end{picture}%
\caption{\label{fig:bondcond}Disc with Lagrangian boundary condition}
\end{figure}

\begin{align} \label{eq:refmodspace} \mathcal{M}_n (\pi) = \bigsqcup_{p \in \vc_{\delta_0}  \cap \tilde{\vc}, 1 \leq j \leq r_0} \mathcal{M}^{p, j}_n (\pi). \end{align}
Using the Pin structure on $\vc_0$ to orient $\mathcal{M}_n (\pi )$, the usual TQFT formalism applies in this setting to show that the signed count $\# \mathcal{M}^{p, j}_n (\pi ) = 0$ for coboundaries $p \in CF^* (\mathcal{V}_{\delta_0}, \tilde{\mathcal{V}})$ so that, for all $1 \leq j \leq r_0$, one obtains the invariant
\begin{align*}
\kappa_{n}^j : H^* (\textnormal{Hom}_{\mathcal{F} (\pi^{-1} (*))} (\vc_{\delta_0} , \vc_{\delta_n})) \cong HF^* (\vc_{\delta_0}, \tilde{\vc}) \to \mathbb{Z}
\end{align*} 
where 
\begin{align} \label{eq:defkappa}
\kappa_{n}^j (p) = \# \mathcal{M}^{p , j}_n (\pi ).
\end{align}
This additional invariant for atomic Lefschetz pencils will be used in the induction proof for Theorem~\ref{thm:Amodel} in Section~\ref{sec:proof}.

\subsection{Circuit LG models} In this section we will review the mirror potential of an elementary birational cobordism and state the main theorem of this paper.  Consider $\mathbf{a} = (a_0, \ldots, a_{d + 1}) \in \mathbb{Z}^{d + 2}$ satisfying equations~\eqref{eq:balance} and of signature $(p,q)$. If necessary, permute the coordinates of $\mathbf{a}$ so that $a_i > 0$ for $0 \leq i \leq p$ and $a_i < 0$ for $p < i \leq d + 1$. The volume of $\mathbf{a}$ is defined to be the constant 
\begin{align} \label{eq:vola} \Vol (\mathbf{a}) := \sum_{i = 0}^p a_i . \end{align}
Let $[Z_0: \cdots : Z_{d + 1}]$ be homogeneous coordinates of $\mathbb{P}^{d + 1}$. For any subset $I \subseteq \{0, \ldots, {d +1}\}$ take 
\begin{align} \label{eq:coordplane} C_I = \{[Z_0: \cdots : Z_{d + 1}] : Z_i = 0 \textnormal{ for all } i \in I\}\end{align}
to be the associated coordinate plane.  Recall that  the $d$-dimensional pair of pants is the subvariety
\begin{align} 
\label{eq:pairpants} P_d := \left\{[Z_0: \cdots :Z_{d + 1}] \in \mathbb{P}^{d + 1} : \sum_{i = 0}^{d + 1}  Z_i  = 0, Z_i \ne 0 \right\} .
\end{align}
We will write $\bar{P}_d \cong \mathbb{P}^d$ for the closure of $P_d$ in $\mathbb{P}^{d + 1}$. Let  $ \mathcal{L}_{\mathbf{a}} = \mathcal{O}_{P_d} (\Vol  (\mathbf{a}))$ and define the divisors and sections  
\begin{equation} \label{eq:sections} 
\begin{split}
D_0  = \sum_{i = 0}^p a_i C_i |_{\bar{P}_d}, & \hspace{2cm} s_0  = \prod_{i = 0}^{p} Z_i^{a_i}, \\ D_\infty  = - \sum_{i = p + 1}^{d + 1} a_i C_i |_{\bar{P}_d}, & \hspace{2cm}
s_\infty  = \prod_{i = p + 1}^{d + 1} Z_i^{-a_i}.
\end{split}
\end{equation}
Observe that $B_{\mathbf{a}} = \cup_{i \leq p < j} C_{\{i,j\}}$ is the base locus of the associated pencil $\{D_{0}, D_{\infty} \}$.
\begin{definition}
	The circuit pencil associated to $\mathbf{a}$ on $\mathbb{P}^{d + 1}$ is defined by the map $\psi_{\mathbf{a}} : \mathbb{P}^{d + 1} \backslash B_{\mathbf{a}} \to \mathbb{P}^1$ given by $[s_0: s_\infty]$. The circuit LG model $W_{\mathbf{a}}$ is the restriction of $\psi_{\mathbf{a}}$ to the pair of pants $P_d$.
\end{definition}
We now review some facts from \cite{gkz, dkk} on the  basic properties of $W_{\mathbf{a}}$. 
\begin{lemma} \label{lem:circuit_basics}
The circuit LG model $W_{\mathbf{a}}$ is an atomic Lefschetz pencil with unique critical point $p_{\mathbf{a}} = \left[a_0: \cdots : a_{d + 1} \right]$ and critical value $q_{\mathbf{a}} = \prod_{i = 0}^{d + 1} a_i^{a_i}$. 
\end{lemma}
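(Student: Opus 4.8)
The plan is to work directly with the affine chart and compute critical points of the restricted function. First I would pass from the projective pencil $\psi_{\mathbf{a}} = [s_0 : s_\infty]$ to an honest holomorphic function on the pair of pants. On $P_d$, since all $Z_i \neq 0$ and $\sum Z_i = 0$, I can use the coordinates $z_i = Z_i / Z_{d+1}$ (or more symmetrically, pick any normalization) so that $W_{\mathbf{a}}$ becomes the Laurent monomial $W_{\mathbf{a}} = \prod_{i=0}^{d+1} Z_i^{a_i}$, viewed as a function on the $d$-dimensional torus-like variety $P_d \cong \{\sum z_i = 0\} \cap (\mathbb{C}^*)^{\text{stuff}}$. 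This is legitimate because $\sum a_i = 0$ by the balancing condition~\eqref{eq:balance}, so $\prod Z_i^{a_i}$ is homogeneous of degree zero and descends to $P_d \setminus B_{\mathbf{a}}$; moreover it restricts to a nowhere-zero function on $P_d$ itself since all $Z_i \neq 0$ there.

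Next I would find the critical points by Lagrange multipliers. On $\mathbb{P}^{d+1}$ with the constraint $g := \sum_{i=0}^{d+1} Z_i = 0$, a critical point of the multivalued function $\log W_{\mathbf{a}} = \sum_i a_i \log Z_i$ satisfies $\frac{a_i}{Z_i} = \lambda$ for all $i$, i.e. $Z_i = a_i / \lambda$, for some Lagrange multiplier $\lambda$. The constraint $\sum Z_i = 0$ becomes $\frac{1}{\lambda}\sum a_i = 0$, which is automatically satisfied by balancing — so the constraint imposes no restriction on $\lambda$, reflecting the scaling freedom of homogeneous coordinates. Hence in $\mathbb{P}^{d+1}$ there is a \emph{unique} critical point $p_{\mathbf{a}} = [a_0 : \cdots : a_{d+1}]$, and it does lie in $P_d$ because every $a_i \neq 0$ (the second part of~\eqref{eq:balance}) and $\sum a_i = 0$. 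Evaluating: $W_{\mathbf{a}}(p_{\mathbf{a}}) = \prod_i (a_i)^{a_i} = q_{\mathbf{a}}$, using scale-invariance to take representatives $Z_i = a_i$. This gives the critical point and critical value.

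Then I would verify the two remaining assertions: that $p_{\mathbf{a}}$ is a nondegenerate (Lefschetz) critical point, and that $W_{\mathbf{a}}$ together with the compactification $\bar{P}_d$ and divisors $D_0, D_\infty$ of~\eqref{eq:sections} forms an atomic Lefschetz pencil in the sense of Definition~\ref{def:atomLP}. Nondegeneracy is a Hessian computation: in suitable affine coordinates adapted to the constraint (eliminate one variable using $\sum z_i = 0$ and one via the $\mathbb{C}^*$-scaling, so we are on a genuine $d$-dimensional affine piece with coordinates being, say, $\log(Z_i/Z_0)$ for $i=1,\dots,d$), the function $\log W_{\mathbf{a}}$ has Hessian a symmetric matrix built from the $a_i$'s, and one checks it is invertible — concretely the Hessian of $\sum a_i \log Z_i$ in $\log$-coordinates on the torus is $-\mathrm{diag}$-type plus rank-one corrections coming from the linear relations, and invertibility follows from all $a_i$ being nonzero. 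For the Lefschetz-pencil and normal-crossings conditions I would invoke the cited facts: $D_0 \cup D_\infty$ is supported on the coordinate hyperplanes restricted to $\bar P_d \cong \mathbb{P}^d$, which form a normal crossing divisor, $D_0$ and $D_\infty$ are linearly equivalent (both of degree $\Vol(\mathbf{a})$ by~\eqref{eq:vola} and balancing), the base locus $B_{\mathbf{a}} = \cup_{i \le p < j} C_{\{i,j\}}$ is exactly $D_0 \cap D_\infty$, and $W_{\mathbf{a}}$ is nonconstant so a generic member is smooth — this is standard for circuit/GKZ-type pencils and is the content being quoted from \cite{gkz, dkk}.

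The main obstacle I expect is not the location of the critical point (Lagrange multipliers handle that cleanly, and uniqueness is immediate from the linearity of the equations $Z_i = a_i/\lambda$) but rather the \emph{nondegeneracy} check together with confirming there are no critical points "at infinity" hiding in the closure — i.e., genuinely verifying atomicity rather than merely "$\geq 1$ critical point." One must be careful that after excising the base locus $B_{\mathbf{a}}$ and restricting to the open part $P_d$, no further critical fibers appear over $\mathbb{C}^*$; this is where the precise structure of the coordinate-hyperplane stratification of $\bar P_d$ and the genericity statements from \cite{gkz} do the real work. I would lean on those references for the pencil-theoretic part and supply the explicit Hessian computation for nondegeneracy, the latter being a short linear-algebra argument exploiting $a_i \neq 0$ for all $i$.
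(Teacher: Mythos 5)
Your proposal is correct and takes essentially the same route as the paper: the paper locates the critical locus via the equivalent condition $\mathrm{d}s \wedge \mathrm{d}\psi_{\mathbf{a}} = 0$ (your Lagrange-multiplier equation $a_i/Z_i = \lambda$ in disguise), concludes that $[a_0:\cdots:a_{d+1}]$ is the unique critical point with value $\prod_i a_i^{a_i}$, and then cites \cite[Proposition~2.13]{dkk} for the Hessian nondegeneracy while relying on \cite{gkz, dkk} for the pencil-theoretic facts, just as you propose. One small correction to your Hessian sketch: $\sum a_i \log Z_i$ is linear in log-coordinates (so its torus Hessian vanishes); the diagonal-plus-rank-one Hessian you describe arises only after eliminating a variable through the affine constraint $\sum Z_i = 0$, and its determinant is proportional to $a_{d+1}/(a_0\prod_{i=1}^{d} a_i)$, nonzero precisely because every $a_i \ne 0$ together with the balancing condition, so your planned computation does go through.
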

\begin{proof}
	This follows at once from a basic computation of the critical points for $W_{\mathbf{a}}$. Indeed, wedging the exterior derivative of the constraint $s([Z_0: \cdots : Z_{d + 1}]) = \sum Z_i = 0$ and $\psi_{\mathbf{a}}$ gives
	\begin{align*}
	 \tn s \wedge \tn \psi_{\mathbf{a}} = \psi_{\mathbf{a}} \sum_{i < j} \left( \frac{a_j}{Z_j} - \frac{ a_i}{Z_i} \right) \tn Z_i \wedge \tn Z_j . 
	\end{align*}
	So a critical point $[Z_0: \cdots :Z_{d + 1}]$ of $W_{\mathbf{a}}$ satisfies $\frac{Z_j}{a_j} = \frac{ Z_i}{a_i}$ for all $i$ and $\sum Z_i = 0$. This implies the result that $p  = [a_0: \cdots : a_{d + 1}]$ is the unique critical point. To see that it is a Morse critical point, one applies the complex Morse Lemma by computing the Hessian of $W_{\mathbf{a}}$ and observing that it is non-degenerate. We cite \cite[Proposition~2.13]{dkk} for this computation.
\end{proof}
To define the Fukaya-Seidel category associated to $W_\mathbf{a}$, and in particular to consistently grade the morphisms between Lagrangians, we specify a non-zero holomorphic volume form. We will keep some flexibility here, but index the possible choices by using our function $\nu$ from equation~\ref{eq:defnu}. Fix the meromorphic $(d + 1)$-form 
\begin{equation*}
\eta_{\mathbb{P}^{d + 1}} = \sum_{i = 0}^{d + 1}  \prod_{j \ne i} \frac{\tn Z_j}{Z_j}
\end{equation*}
which is holomorphic and non-zero on $\mathbb{P}^{d + 1} \backslash (D_0 \cup D_\infty)$. To obtain a non-zero $d$-form on $P_d$, assume that $\sum_{ i = 0}^{ d + 1} \nu (i) = 0$ and take
\begin{align*} \varrho_{\nu} & = \prod_{i = 0}^{d + 1} Z_i^{- \nu (i)}  \sum_{i = 0}^{d + 1} \frac{\tn Z_i}{Z_{d + 1}}
\end{align*}
to define 
\begin{align} \label{eq:etadef}
\eta_{\nu} = \eta_{\mathbb{P}^{d + 1}} / \varrho_{\nu}.
\end{align}
For later use, we express $\eta_{\nu}$ in local coordinates  $(z_1, \ldots, z_d) \in (\mathbb{C}^*)^d$ where $-1 \ne \sum z_i$ and pull back via the chart $[(-1 - \sum z_i): z_1: \cdots :z_d : 1] \in P_d$. Here a short computation shows
\begin{align} \label{eq:etalc}
\eta_\nu = \left(-1 - \sum z_i \right)^{\nu (0) - 1} \prod_{i = 1}^d z_i^{\nu (i) - 1} \tn z_1 \wedge \cdots \wedge \tn z_d .
\end{align}

The $A$-model category associated to $\mathbf{a}$ may then be defined as follows.
\begin{definition}
	Suppose $\mathbf{a}, \nu \in \mathbb{Z}^{d + 1}$ are balanced, $n \in \mathbb{N}$, $u \in \mathbb{R}_{< 0}$ and $* = e^u q_{\mathbf{a}}$. Denote by $\mathcal{A}_{\mathbf{a}, \nu, n}$ the directed subcategory $\mathcal{F} (W^{-1}_{\mathbf{a}} (*)) )_\mathbf{B}$ where $\mathbf{B} = \{L_0, \ldots, L_{n -1 }\}$ with grading induced by $\eta^2_{\nu}$. If $a_{d + 1} < 0$, let $\nu = 0$ and write $\mathcal{T}^A_{\mathbf{a}}$ for the $n$-unfolded Fukaya-Seidel category $\textnormal{Tw} (\mathcal{A}_{\mathbf{a}, \nu, -a_{d + 1}}) \cong \mathcal{F} (W_{\mathbf{a}}^{1/-a_{d + 1}})$.
\end{definition}
The definition of $\mathcal{A}_{\mathbf{a}, \nu, n}$ has an ambiguity up to Hamiltonian deformations of $L_i$, although it is well defined up to quasi-equivalence. To remove this ambiguity, given a collection $\mathcal{B} = \{L_0, \ldots, L_{n - 1}\} \subset W^{-1}_{\mathbf{a}} (*)$ for which $L_i$ is Hamiltonian equivalent to $\vc_{\delta_i}$ in $W^{-1}_\mathbf{a} (*)$, we will say ${\mathcal{B}}$ is a \textbf{Hamiltonian representative basis} of $W_{\mathbf{a}}$ and assert that $\mathcal{A}_{\mathbf{a}, \nu, n}$ is defined by $\mathcal{B}$.  The main theorem of the paper gives an explicit description of $\mathcal{A}_{\mathbf{a}, \nu, n}$ in cases where $n$ is bounded by the volume of $\mathbf{a}$.
\begin{theorem} \label{thm:Amodel}
	Given balanced $\mathbf{a}, \nu \in \mathbb{Z}^{d +1}$ and $n \in \mathbb{N}$ with $0 \leq n <  \Vol (\mathbf{a})$, there exists a Hamiltonian representative basis $\mathcal{B}$ of $W_{\mathbf{a}}$ defining $\mathcal{A}_{\mathbf{a}, \nu, n}$ and an isomorphism
	\begin{align} \label{eq:equivalence} \Xi_{\mathbf{a}, n} :  \mathcal{C}_{\mathbf{a}, \nu, n}  \to \mathcal{A}_{\mathbf{a}, \nu, n}  \end{align}
	for which \begin{enumerate}[label=(\roman*),ref=\thetheorem(\roman*)]
		\item \label{thm:Amodel:1} $\Xi_{\mathbf{a}, n} (R_{\mathbf{a}} (k)) = L_k $, 
		\item \label{thm:Amodel:2} $\Xi_{\mathbf{a}, n}$ takes the monomial basis vectors to the canonical basis of intersections.
	\end{enumerate} 
	Furthermore, for any $0 \leq j \leq p$, one has 
	\begin{align}
		\label{eq:divisor}
	\kappa_{a_j}^j (\Xi_{\mathbf{a}, n} (x)) & = \begin{cases} 1 & \textnormal{ if } x = v_j , \\ 0 & \textnormal{ otherwise.} \end{cases}
	\end{align}
\end{theorem}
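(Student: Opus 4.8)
\textbf{Proof proposal for Theorem~\ref{thm:Amodel}.}

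The plan is to argue by induction on the dimension $d$, with the base case $d = 1$ handled by the explicit analysis promised for Section~\ref{sec:1dcase}. In dimension one, the pair of pants $P_1$ is a thrice-punctured sphere, $W_{\mathbf a}$ has a unique critical point by Lemma~\ref{lem:circuit_basics}, and the fiber $W_{\mathbf a}^{-1}(*)$ is a finite set of points; the vanishing cycles $\vc_{\delta_k}$ and their intersections can be written down by hand, and one checks directly that the directed $A_\infty$-structure on $\mathbf{B} = \{L_0,\dots,L_{n-1}\}$ matches $\mathcal{C}_{\mathbf a,\nu,n}$ under the map sending $R_{\mathbf a}(k)\mapsto L_k$ and monomials to intersection points. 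The grading statement is where the function $\nu$ enters: the Maslov indices of the intersection points are computed against $\eta_\nu^2$ using the local formula \eqref{eq:etalc}, and this must reproduce the bigrading \eqref{eq:degrees}. The invariant count \eqref{eq:divisor} in dimension one is a count of holomorphic discs with boundary on $\vc_{\delta_0}$ passing through the divisor point $D_0^j$; for the circuit pencil these discs are explicit (images of sections of the elementary Lefschetz pencil on $\mathbb{P}^1$ after the $z\mapsto z^{a_j}$ pullback of \eqref{eq:pullback2}), and the count is $1$ exactly for the generator $v_j$.

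For the inductive step, the key structural input is Proposition~\ref{prop:fibprod}: the fiber of $W_{\mathbf a}$ is realized as the pullback of a $(d-1)$-dimensional circuit potential along a one-dimensional circuit. Concretely, I would use this to express $W_{\mathbf a}^{-1}(*)$ as (a smoothing of) a fiber product, so that the vanishing cycles $\vc_{\delta_k}$ of $W_{\mathbf a}$ become \emph{matching cycles} in the sense of \cite[Chapter~III, \S16--18]{seidel}, built from the vanishing cycles of the lower-dimensional circuit $W_{\mathbf a'}$ via a matching path. By the matching-cycle formalism, the Floer cohomology $HF^*(L_k, L_\ell)$ and the $A_\infty$-products among the $L_k$ are then computed from the corresponding data for $W_{\mathbf a'}$ together with the one-dimensional monodromy; combined with the inductive hypothesis that $\mathcal{A}_{\mathbf a',\nu',n}\cong \mathcal{C}_{\mathbf a',\nu',n}$, this should identify $\mathcal{A}_{\mathbf a,\nu,n}$ with $\mathcal{C}_{\mathbf a,\nu,n}$. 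The point is that $R_{\mathbf a}$ is built from $R_{\mathbf a'}$ in a parallel, essentially tensorial way (adjoining one symmetric or exterior generator), so the algebraic side mirrors the geometric surgery; I would make this precise by matching the generator $v_i$ of $R_{\mathbf a}$ with the intersection point produced by the matching cycle construction, and checking that the shift by $\nu(i)$ in \eqref{eq:degrees} is exactly the grading shift introduced by the holomorphic volume form $\eta_\nu$ under the decomposition \eqref{eq:etalc}.

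For the divisor count \eqref{eq:divisor} in the inductive step, I would use the TQFT gluing property of the moduli spaces $\mathcal{M}_n^{p,j}(\pi)$ defined in \eqref{eq:refmodspace}: a section of $\tilde\pi_0: (\tilde E_S, F)\to(S,\partial S)$ meeting $D_0^j$ decomposes, via the fiber-product structure of Proposition~\ref{prop:fibprod}, into a one-dimensional section (contributing the count $\kappa_{a_j}^j$ for the elementary circuit, known from the base case) glued to a section in the $(d-1)$-dimensional circuit; invariance of the count lets me reduce to the lower-dimensional case and the one-dimensional computation. The functoriality of $\kappa$ under the pullback \eqref{eq:pullback2} with $n = a_j$ is precisely what makes the exponent $a_j$ appear.

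The main obstacle I expect is the matching-cycle identification in the inductive step: Proposition~\ref{prop:fibprod} presents the fiber as a pullback of singular objects over a highly non-generic base, so one must perturb carefully to a genuine Lefschetz situation where Seidel's matching-path machinery applies, control the vanishing cycles through this perturbation, and verify that the induced brane structures (gradings from $\eta_\nu^2$, relative Pin structures) are the ones that make $\Xi_{\mathbf a,n}$ strictly compatible with compositions rather than merely up to quasi-isomorphism. Getting the $A_\infty$-products — not just the cohomology-level intersections — to match the (formal, associative) algebra $\mathrm{End}_{R_{\mathbf a}}(\oplus R_{\mathbf a}(k))$ may require an additional formality or degree argument, e.g. showing all higher products vanish for degree reasons once $n < \Vol(\mathbf a)$, analogous to the vanishing of the $v_{d+1}$-morphism noted at the end of the proof of Theorem~\ref{thm:Bmodel}.
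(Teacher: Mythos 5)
Your overall strategy coincides with the paper's: induction on $d$ with the $d=1$ case done by hand (Proposition~\ref{prop:thma1d}), and the inductive step built on Proposition~\ref{prop:fibprod} together with Seidel's matching-path machinery (Propositions~\ref{prop:isotopy}, \ref{prop:matchingpaths}, \ref{prop:alphavc}). But two steps, as you describe them, would fail. First, the algebraic side of your induction is not right: the lower-dimensional circuit produced by Proposition~\ref{prop:fibprod} is $\mathbf b=(a_0+a_1,a_2,\dots,a_{d+1})$, obtained by \emph{merging} two positive entries, and $R_{\mathbf a}$ is not obtained from $R_{\mathbf b}$ by ``adjoining one symmetric or exterior generator'' --- the two new generators $v_0,v_1$ are related to the old one by $v_0v_1\leftrightarrow w_0$, which is not a polynomial extension. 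What actually happens is that $\Hom_{\mathcal A_{\mathbf a,\nu,n}}(L_j,L_k)$ splits as a direct sum indexed by the intersection points $z^m\in\mu_j\cap\mu_k$ of the one-dimensional matching paths (equation~\eqref{eq:dirsum}), each summand being a Hom space in $\mathcal A_{\mathbf b,\nu_{\mathbf b},n}$ between objects whose indices are shifted by the weight $\sigma_w(m)=a_0m_0+a_1m_1$ and whose grading is shifted by $\sigma_d(m)$, mirrored on the $B$-side by the maps $\chi_s^m$ of \eqref{eq:defchis}. Without this weight-shifted decomposition there is no way to match monomial bases with intersection points, so the ``essentially tensorial'' picture does not give the isomorphism \eqref{eq:equivalence} as stated.

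Second, and more seriously, you treat equation~\eqref{eq:divisor} as an extra statement to be checked afterwards by TQFT gluing, whereas in the paper it is an essential strengthening of the induction hypothesis used to compute the ring structure itself. The matching-cycle formalism lives over $E_{\mathbf D(\mathbf a)}=P_d\setminus D_{h,-1}$, and the products $m_2(\Xi_{\mathbf a,n}(v_1),\Xi_{\mathbf a,n}(v_0))$ --- exactly the relation $v_0v_1\leftrightarrow w_0$ --- are computed by triangles whose projection to the base passes through the puncture $-1$ (Proposition~\ref{prop:mpd:b}), i.e.\ by sections meeting the excised divisor $D_{h,-1}$. These counts cannot be extracted from the fiber-product data plus ``one-dimensional monodromy'': in the paper they are identified with the moduli spaces $\mathcal M^{\tilde x,0}_{a_0+a_1}(\bar W_{\mathbf b})$, hence with $\kappa^0_{b_0}(\tilde x)$ for the lower-dimensional circuit, which only the inductive instance of \eqref{eq:divisor} evaluates. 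So \eqref{eq:divisor} for $\mathbf b$ is an input to proving \eqref{eq:equivalence} for $\mathbf a$, not merely an output, and your sketch has no substitute for it. Relatedly, your fallback of killing higher products ``for degree reasons'' is not how the paper argues and is doubtful for general $\nu$: the vanishing of $m_s$ for $s\ne 2$ follows by projecting holomorphic polygons to the base and invoking Proposition~\ref{prop:mpd:2}, which says directed discs there are only triangles or constants. Your base case and your final gluing picture for \eqref{eq:divisor} are, by contrast, consistent with what the paper does.
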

This theorem will be proved by induction on $d$ in the following sections. For now, we note that Theorems~\ref{thm:Amodel} and \ref{thm:Bmodel}, yield the following important corollary.
\begin{corollary} \label{cor:main_result}
	The categories $\mathcal{T}_{\mathbf{a}}^B$ and $\mathcal{T}_{\mathbf{a}}^A$ are equivalent.
\end{corollary}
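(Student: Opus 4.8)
The plan is to prove Theorem~\ref{thm:Amodel} by induction on the dimension $d$, mirroring the structure of the paper as advertised in the introduction, and then to read off the corollary by composing the resulting equivalence $\Xi_{\mathbf{a}, -a_{d+1}}$ with the equivalence $\mathcal{T}^B_{\mathbf{a}} \cong \mathcal{D}_{\mathbf{a}, \nu, -a_{d+1}}$ of Theorem~\ref{thm:Bmodel}, passing to twisted complexes on both sides. First I would settle the base case $d = 1$ completely: here $W_{\mathbf{a}}$ is a circuit LG model on the one-dimensional pair of pants $P_1 \cong \mathbb{P}^1 \setminus \{3 \text{ points}\}$, the fiber $W_{\mathbf{a}}^{-1}(*)$ is a finite set of points, and the Fukaya category of that fiber together with the distinguished basis $\mathbf{B} = \{L_0, \ldots, L_{n-1}\}$ of Lefschetz thimbles can be computed by hand. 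The content of the base case is to match the morphism spaces $\Hom(L_i, L_j)$ — which are spanned by intersection points of vanishing cycles — with $\{v \in R_{\mathbf{a}, \nu} : \wt(v) = j - i\}$, to check the $A_\infty$-structure is formal and agrees with the graded-symmetric algebra structure on $R_{\mathbf{a}}$ under the chosen grading induced by $\eta^2_\nu$ (this is where formula~\eqref{eq:etalc} for $\eta_\nu$ in local coordinates enters, fixing the degree shifts via $\nu$), and to verify the disc-count identity~\eqref{eq:divisor} for $\kappa^j_{a_j}$ by an explicit holomorphic-section count in the one-dimensional pencil.

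For the induction step I would exploit Proposition~\ref{prop:fibprod}, which expresses the fiber $W_{\mathbf{a}}^{-1}(*)$ as a fiber product: the pullback of a $(d-1)$-dimensional circuit along a one-dimensional circuit. Concretely, the idea is to split the coordinates $\mathbf{a} = (a_0, \ldots, a_{d+1})$ and factor $W_{\mathbf{a}}$ through an intermediate projection so that a generic fiber is built from a lower-dimensional circuit fiber fibered over (a fiber of) a one-dimensional circuit. Then the vanishing cycles $\vc_{\delta_k}$ of $W_{\mathbf{a}}$ are matching cycles in the sense of Seidel \cite{seidel}: they arise from matching paths in the base of the one-dimensional circuit connecting critical values, with the lower-dimensional vanishing cycles transported along them. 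The techniques of Chapter on matching paths/cycles in \cite{seidel} then let me compute $\Hom_{\mathcal{F}(W_{\mathbf{a}}^{-1}(*))}(L_i, L_j)$ and the $A_\infty$-products from the corresponding data for the $(d-1)$-dimensional circuit (handled by the inductive hypothesis) together with the one-dimensional base data (handled by the base case). The key algebraic point is that the tensor/symmetric-algebra structure of $R_{\mathbf{a}, \nu}$ over the split variables $\{v_i\}_{i \le p}$ and the exterior variables $\{v_i\}_{i > p}$ is exactly reproduced by this matching-cycle Künneth-type computation, with the degree and weight bookkeeping again pinned down by $\nu$ and the hypothesis $n < \Vol(\mathbf{a})$ (which guarantees, as in the proof of Theorem~\ref{thm:Bmodel}, that the would-be morphism $v_{d+1}$ never appears among objects of $\mathbf{B}$, so the relevant part of $R_{\mathbf{a}}$ is a polynomial-times-exterior algebra on the remaining generators). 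The disc-count identity~\eqref{eq:divisor} is then propagated through the induction by a gluing/TQFT argument for the moduli spaces $\mathcal{M}^{p,j}_{a_j}(\pi)$ of sections with Lagrangian boundary condition defined in~\eqref{eq:refmodspace}: a section in the $d$-dimensional problem decomposes according to the fiber product, and the count factors as a product of a one-dimensional count (giving $1$ for $x = v_j$) and a lower-dimensional count that is forced to be nonzero only on the corresponding generator by the inductive hypothesis.

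Two auxiliary points need care. First, the Fukaya-Seidel category here is the $n$-unfolded category $\mathcal{F}(W_{\mathbf{a}}^{1/n})$ over the non-simply-connected base $\mathbb{C}^*$, so throughout I must use Proposition~\ref{prop:unfoldedfuk} to know that the answer is computed by the directed subcategory on $\mathbf{B}(k)$ for any $k$ and is independent of the cut $S^\circ$; in particular the monodromy around $0 \in \mathbb{C}^*$, which cyclically shifts $\vc_{\delta_i} \mapsto \vc_{\delta_{i+1}}$, must be tracked as the grading-preserving symplectomorphism realizing the weight shift $R_{\mathbf{a}} \mapsto R_{\mathbf{a}}(1)$, so that $\Xi_{\mathbf{a}, n}(R_{\mathbf{a}}(k)) = L_k$ is consistent. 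Second, I must produce a genuine \emph{Hamiltonian representative basis} $\mathcal{B}$ — i.e. fix representatives of the $\vc_{\delta_i}$ up to Hamiltonian isotopy in the fiber so that $\mathcal{A}_{\mathbf{a},\nu,n}$ is strictly defined — and check the constructed $\Xi_{\mathbf{a},n}$ is an isomorphism of $A_\infty$-categories, not merely a quasi-isomorphism; this follows once formality is established on both sides (the $B$-side formality being exactly the ``formal exceptional collection'' output of Theorem~\ref{thm:Bmodel}) and the morphism spaces are matched on the nose by~(ii). The main obstacle I anticipate is the induction step's $A_\infty$-structure computation: showing that \emph{all} higher products $m_k$ on the matching cycles vanish except for the ones dictated by the graded-symmetric multiplication in $R_{\mathbf{a},\nu}$, which requires combining Seidel's long exact sequences / matching-cycle Floer computations with a careful energy and index argument to rule out spurious holomorphic polygons — and then threading the divisor invariant $\kappa^j_{a_j}$ through the same gluing without sign errors. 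Granting Theorem~\ref{thm:Amodel}, Corollary~\ref{cor:main_result} is immediate: for $a_{d+1} < 0$ and $\nu = 0$, Theorem~\ref{thm:Amodel} with $n = -a_{d+1}$ gives $\mathcal{C}_{\mathbf{a}, 0, -a_{d+1}} \cong \mathcal{A}_{\mathbf{a}, 0, -a_{d+1}}$, hence $\mathcal{D}_{\mathbf{a}, 0, -a_{d+1}} = \textnormal{Tw}(\mathcal{C}_{\mathbf{a},0,-a_{d+1}}) \cong \textnormal{Tw}(\mathcal{A}_{\mathbf{a},0,-a_{d+1}}) = \mathcal{T}^A_{\mathbf{a}}$, while Theorem~\ref{thm:Bmodel} gives $\mathcal{T}^B_{\mathbf{a}} \cong \mathcal{D}_{\mathbf{a},0,-a_{d+1}}$; composing yields $\mathcal{T}^B_{\mathbf{a}} \cong \mathcal{T}^A_{\mathbf{a}}$.
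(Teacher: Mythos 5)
Your proposal follows the paper's own route: the corollary is obtained exactly as you say, by combining Theorem~\ref{thm:Bmodel} ($\mathcal{T}^B_{\mathbf{a}} \cong \mathcal{D}_{\mathbf{a}, \nu, -a_{d+1}}$) with Theorem~\ref{thm:Amodel} applied with $n = -a_{d+1}$ and $\nu = 0$, passing to twisted complexes, and your sketch of how Theorem~\ref{thm:Amodel} itself is established (base case $d=1$, induction via the fiber-product description of Proposition~\ref{prop:fibprod} and Seidel's matching-cycle machinery, with Proposition~\ref{prop:unfoldedfuk} handling the $\mathbb{C}^*$ base and the invariant $\kappa^j_{a_j}$ threaded through the induction) matches the structure of Sections~\ref{sec:1dcase}--\ref{sec:proof}. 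This is essentially the same argument as in the paper.
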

This corollary gives further evidence of the link between birational geometry on the $B$-model side and degenerations on the $A$-model side of mirror symmetry. As a special case of this corollary, we obtain the folklore result.
\begin{corollary} \label{cor:wps}
	The homological mirror symmetry conjecture holds for weighted projective spaces $\mathbb{P} (a_0, \ldots, a_d)$.
\end{corollary}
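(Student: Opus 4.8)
The plan is to realise the weighted projective stack $\mathbb{P}(a_0,\ldots,a_d)$ as the positive chamber of an elementary birational cobordism whose negative chamber is empty, and then to invoke Corollary~\ref{cor:main_result}. Put $n=\sum_{i=0}^{d}a_i$ and $\mathbf{a}=(a_0,\ldots,a_d,-n)\in\mathbb{Z}^{d+2}$; this element is balanced, has no vanishing coordinate, and has signature $(d,0)$. For such an $\mathbf{a}$ the $\mathbb{C}^*$-action of Section~\ref{sec:bircob} on $\mathbb{C}^{d+1}$ carries only the positive weights $a_0,\ldots,a_d$, so $B_-=\mathbb{C}^{d+1}$ and hence $X_-^{\mathbf{a}}=\emptyset$, while $B_+=\{0\}$ gives $X_+^{\mathbf{a}}=\mathbb{P}(a_0,\ldots,a_d)$ (this is the pattern behind the tables for $\mathbf{a}=(1,1,-2)$ and $(2,3,-5)$). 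Because $D(X_-^{\mathbf{a}})$ is now the zero category, the semi-orthogonal decomposition~\eqref{eq:sodecomp} collapses to an equivalence $D^b(\mathbb{P}(a_0,\ldots,a_d))\cong\mathcal{T}^B_{\mathbf{a}}$, and Theorem~\ref{thm:Bmodel} (taking $\nu=0$, so $-a_{d+1}=n$) rewrites this as $\mathcal{D}_{\mathbf{a},0,n}$, the Beilinson-type collection $R_{\mathbf{a}}(0),\ldots,R_{\mathbf{a}}(n-1)$ depicted in Figures~\ref{fig:quiv1}--\ref{fig:quiv2}.

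On the symplectic side, $\mathcal{T}^A_{\mathbf{a}}$ is by definition the $n$-unfolded Fukaya--Seidel category $\mathcal{F}(W_{\mathbf{a}}^{1/n})$ of the circuit LG model. I would identify $W_{\mathbf{a}}$, via Example~\ref{eg:hmsp}, with the atomic Lefschetz pencil obtained from the Givental--Hori--Vafa potential $\mathbf{w}_{\bar{\mathbf{a}}}=z_0+\cdots+z_d$ on $G_{\bar{\mathbf{a}}}=\{\prod_i z_i^{a_i}=1\}$ by quotienting by the diagonal $\mathbb{G}_n$-action on its $n$ critical points and excising the zero fibre; the $n$-fold unfolding restores those $n$ critical values, so Proposition~\ref{prop:unfoldedfuk} yields $\mathcal{F}(W_{\mathbf{a}}^{1/n})\cong\mathcal{F}(\mathbf{w}_{\bar{\mathbf{a}}})$. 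Chaining the two sides through Corollary~\ref{cor:main_result} gives
\[
D^b(\mathbb{P}(a_0,\ldots,a_d))\;\cong\;\mathcal{T}^B_{\mathbf{a}}\;\cong\;\mathcal{T}^A_{\mathbf{a}}\;\cong\;\mathcal{F}(\mathbf{w}_{\bar{\mathbf{a}}}),
\]
which is exactly the homological mirror symmetry statement for $\mathbb{P}(a_0,\ldots,a_d)$.

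The subtle step, and the one I expect to be the real obstacle, is that signature $(d,0)$ is precisely the \emph{boundary} case $-a_{d+1}=n=\Vol(\mathbf{a})$ of Theorem~\ref{thm:Amodel}, whose hypothesis asks for $n<\Vol(\mathbf{a})$; one must check that Corollary~\ref{cor:main_result} genuinely computes $\mathcal{A}_{\mathbf{a},0,n}$ and not merely $\mathcal{A}_{\mathbf{a},0,n-1}$. Applying Theorem~\ref{thm:Amodel} with $n-1$ in place of $n$ already gives all $\Hom(L_i,L_j)$ with $j\le n-2$; the groups $\Hom(L_i,L_{n-1})$ with $i\ge 1$ are handled by transporting along the graded symplectic monodromy about $0$, which sends $(L_i,L_{n-1})$ to $(L_{i-1},L_{n-2})$ and so lies in the range already treated, and the one remaining group $\Hom(L_0,L_{n-1})$ is forced by comparison with $\{v\in R_{\mathbf{a}}:\wt(v)=n-1\}$ --- a space into which $v_{d+1}$ (of weight $n>n-1$) cannot enter, hence equal to the weight-$(n-1)$ part of $\mathbb{C}[v_0,\ldots,v_d]$ already identified by the induction. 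Checking this last identification carefully, together with the compatibility of the divisor invariants $\kappa^{j}_{a_j}$ of~\eqref{eq:divisor} at the boundary, is where the work concentrates; the rest is bookkeeping around Corollary~\ref{cor:main_result} and the classical shape of the Hori--Vafa mirror.
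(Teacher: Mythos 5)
Your proposal is correct and follows the paper's own route: take $\mathbf{a}=(a_0,\ldots,a_d,-n)$ so that $X_-^{\mathbf{a}}=\emptyset$ and \eqref{eq:sodecomp} collapses to $D(\mathbb{P}(a_0,\ldots,a_d))\cong\mathcal{T}^B_{\mathbf{a}}$, identify $\mathcal{T}^A_{\mathbf{a}}\cong\mathcal{F}(\mathbf{w}_{\bar{\mathbf{a}}})$ through the quotient map $\phi\colon G_{\bar{\mathbf{a}}}\to P_d\cup D_{d+1}$ of Example~\ref{eg:hmsp} (the paper writes the fiber square $W_{\mathbf{a}}\circ\phi=1/\bar{\pi}$ explicitly and then argues, as in Proposition~\ref{prop:unfoldedfuk}, that both categories are built from the same distinguished basis of vanishing cycles for $\mathbf{w}_{\bar{\mathbf{a}}}$), and conclude with Corollary~\ref{cor:main_result}. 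The only real difference is your discussion of the boundary case $-a_{d+1}=\Vol(\mathbf{a})$ versus the hypothesis $n<\Vol(\mathbf{a})$ of Theorem~\ref{thm:Amodel}: the paper's proof is silent on this and simply cites Corollary~\ref{cor:main_result}, so your observation is a fair point about the paper's bookkeeping rather than a defect of your reduction; note, however, that your sketched patch for $\Hom(L_0,L_{n-1})$ (``forced by comparison with $R_{\mathbf{a}}$'') is not an $A$-side argument --- since all weight differences among $L_0,\ldots,L_{n-1}$ are at most $n-1<\Vol(\mathbf{a})$, the honest fix is to check that the inductive proof of Theorem~\ref{thm:Amodel} applies to these pairs unchanged, not to import the answer from the $B$-side one is trying to match.
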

This result was proven in \cite{ako} for weighted projective planes and \cite{fu} for ordinary (unweighted) projective space. 
\begin{proof}[Proof of Corollary~\ref{cor:wps}]
Given positive weights $a_i$ with $0 \leq i \leq d$ take $\bar{\mathbf{a}} = (a_0, \ldots, a_{d})$, $a_{d + 1} = - \sum_{i = 0}^d a_i$ and $\mathbf{a} = (a_0, \ldots, a_{d + 1})$. The VGIT for this collection of weights is a transition from $X_+^{\mathbf{a}} = \mathbb{P} (a_0, \ldots, a_{d + 1})$ to the vacuum $X_-^{\mathbf{a}} = \emptyset$. By equation~\eqref{eq:sodecomp}, we have that $D (\mathbb{P} (a_0, \ldots, a_{d + 1}) ) = \mathcal{T}^{B}_{\mathbf{a}}$.

On the $A$-model side, recall from equation~\eqref{eq:weightedprojmirror} that $G_{\bar{\mathbf{a}}} = \{(z_0, \ldots, z_{d}) : \prod z_i^{a_i} = 1\}$. Write $D_{d + 1}$ for the divisor $\{[Z_0: \cdots : Z_{d + 1}] : Z_{d + 1} = 0, \sum_{i = 0}^{d} Z_i = 0\}$ in $\bar{P}_d$. Consider the map $\phi : G_{\bar{\mathbf{a}}} \to P_d \cup D_{d + 1}$ obtained by taking 
\begin{align*} \phi (z_0, \ldots, z_d) = \left[z_0: \cdots : z_{d}: - \sum z_i \right] . \end{align*}

It is not hard to see that $\phi$ is simply the quotient by $\mathbb{G}_{a_{d + 1}}$. Using the definitions of the mirror potential $\mathbf{w}_{\bar{\mathbf{a}}}$ of $\mathbb{P} (a_0, \ldots, a_n)$ in Example~\ref{eg:hmsp}  and of the quotient $\bar{\pi}$ from equation~\eqref{eq:barpi}, one observes that $W_\mathbf{a} \circ \phi = 1/\bar{\pi}$. This yields the fiber square
\begin{equation*} 
\begin{tikzpicture}[baseline=(current  bounding  box.center), scale=2]
\node (A) at (0,1) {$G_{\mathbf{a}}$};
\node (B) at (1,1) {$P_d \cup D_{d + 1}$};
\node (C) at (0,0) {$\mathbb{C}$};
\node (D) at (1,0) {$\mathbb{P}^1 \backslash [0:1]$};
\path[->,font=\scriptsize]
(A) edge node[above]{$\phi$} (B)
(A) edge node[left]{$-\mathbf{w}_{\mathbf{a}}$} (C)
(B) edge node[right]{$W_{\mathbf{a}}$} (D)
(C) edge node[above]{$z^{a_{d + 1}}$} (D);
\end{tikzpicture} 
\end{equation*}	
In particular, both the Fukaya-Seidel category  $\mathcal{F} (\mathbf{w}_{\mathbf{a}})$ and $\mathcal{T}_{\mathbf{a}}^{A}$ are quasi-equivalent to the category of twisted complexes of the algebra generated by vanishing cycles of a distinguished basis of paths for $\mathbf{w}_{\mathbf{a}}$. This yields the equivalence $\mathcal{T}^A_{\mathbf{a}} = \mathcal{F} (\mathbf{w}_{\mathbf{a}})$, and, applying Corollary~\ref{cor:main_result}, finishes the proof.
\end{proof}

%Define $n$-admissible paths, $n$-distinguished basis, vanishing thimble $\vt_\delta$, vanishing cycle $\vc_\delta$.

\subsection{One-dimensional case}\label{sec:1dcase}
In this section we will consider the case of $d = 1$. Up to isotopy, we will describe the thimbles of $\delta_k$ for any given $k$ and utilize  Proposition~\ref{prop:unfoldedfuk} to prove equation~\eqref{eq:equivalence} in Theorem~\ref{thm:Amodel}.  We will first establish a bit of notation. 

Let $0 < \varepsilon$ be a small real number and 
$h : \mathbb{R}_{> 0} \to [0, 1]$ be a smooth function satisfying
\begin{enumerate}
	\item $h$ is  monotonically decreasing,
	\item $h (t) = 1 $ for $t \leq \varepsilon $ and,
	\item $h (t) = 0 $ for $t \geq 1$. 
\end{enumerate}

Now suppose $S$ is the surface with a point $s \in S$ and a local unit disc chart $U \subset S$ around $S$ with coordinate $z$. Given a real number $\theta$ we define a twist $\tau_{\theta}$ of $U$ by taking
\begin{align} \label{eq:twist}
\tau_{\theta, s} (z) = e^{i \theta h (|z|) )}
\end{align}
and extend $\tau_\theta$ to $S$ by taking the identity outside of $U$.
If $(s_1, \ldots, s_k)$ are a tuple of distinct points and $(\theta_1, \ldots, \theta_k)$ are real numbers we write $\tau_{(\theta_1, \ldots, \theta_k), (s_1, \ldots, s_k)}$ for the composition, which is well defined assuming that the local coordinates were chosen to be disjoint.  

We now state a basic result for $d = 1$. We write $\units_n$ for the $n$-th roots of unity in $\mathbb{C}$.
\begin{proposition}\label{prop:1dthimble}
	Assume that $\mathbf{a} = (a_0, a_1, a_2)$ has signature $(1,0)$. Then there is a trivialization $\phi : \mathbb{C} \backslash \{0, -1\} \to P_1$ and $\varepsilon > 0$  for which 
	\begin{enumerate}[label=(\roman*),ref=\theproposition(\roman*)]
		\item \label{prop:1dthimble:1}  the fiber $W_{\mathbf{a}}^{-1} (*) = \sqrt[a_0]{\varepsilon} \units_{a_0} \cup \sqrt[a_1]{\varepsilon^{-1}} \units_{a_1}$,
		\item \label{prop:1dthimble:2} the vanishing thimble of $\delta_0$ equals the line segment \[\vt_{\delta_0} = \left[\sqrt[a_0]{\varepsilon}, \sqrt[a_1]{\varepsilon^{-1}}\right] , \]
		\item \label{prop:1dthimble:3}the vanishing thimble of $\delta_k$ is isotopic to the curve  \[\tau^k_{(2 \pi / a_0, 2 \pi / a_1), (0, \infty)} (\vt_{\delta_0}) . \]
	\end{enumerate}
\end{proposition}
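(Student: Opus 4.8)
The plan is to work with the explicit model $W_{\mathbf{a}} = [s_0 : s_\infty]$ where, in the chart $[(-1-z) : z : 1] \in P_1$ for $\mathbf{a} = (a_0, a_1, a_2)$ of signature $(1,0)$, one has $s_0 = (-1-z)^{a_0} z^{a_1}$ (up to relabeling the two positive coordinates) and $s_\infty = 1$, so that $W_{\mathbf{a}}$ becomes an honest function $w(z) = (-1-z)^{a_0} z^{a_1}$ on $\mathbb{C} \setminus \{0, -1\}$. First I would identify the fiber over $*= e^u q_{\mathbf{a}}$ with $u \in \mathbb{R}_{<0}$: near $z = 0$ the map looks like $z \mapsto c\, z^{a_1}$ and near $z = -1$ it looks like $z \mapsto c'\,(z+1)^{a_0}$, so for $|*|$ small relative to $q_{\mathbf{a}}$ (which fixes the scale $\varepsilon$; here $\varepsilon = |*/q_{\mathbf{a}}|$ essentially, after the trivialization $\phi$ is chosen to straighten the two ends) the fiber consists of $a_1$ points forming an orbit of $\units_{a_1}$ scaled by $\sqrt[a_1]{\varepsilon^{-1}}$ near the $z=0$ end together with $a_0$ points near the $z=-1$ end forming an orbit of $\units_{a_0}$ scaled by $\sqrt[a_0]{\varepsilon}$; constructing $\phi$ amounts to interpolating between the two local normalizing coordinates, which is where (i) comes from. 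For the count $a_0 + a_1 = \Vol(\mathbf{a})$ to be consistent with Lemma~\ref{lem:circuit_basics} one notes $\deg w = a_0 + a_1$ as a rational map, so the generic fiber has exactly that many points.

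Next, for (ii), I would analyze the vanishing thimble of the straight path $\delta_0(s) = e^{(1-s)u} q_{\mathbf{a}}$. The critical point is $p_{\mathbf{a}} = [a_0 : a_1 : a_2]$, which in the chosen chart is a specific real point lying between the two ends; since $u \in \mathbb{R}_{<0}$ the path $\delta_0$ is the real ray $[0, q_{\mathbf{a}}]$ approached from below, and the real locus of $w$ (the segment of the real $z$-axis between the relevant ends) maps to the real positive axis in a way that is monotone on each side of the critical point. Hence the preimage of $\delta_0$ that limits to $p_{\mathbf{a}}$ is precisely the real segment joining the two fiber points $\sqrt[a_0]{\varepsilon}$ and $\sqrt[a_1]{\varepsilon^{-1}}$ that sit on the positive real axis; a direct check that symplectic parallel transport along $\delta_0$ preserves this real segment (it is the fixed locus of the anti-holomorphic involution $z \mapsto \bar z$, which is an anti-symplectic symmetry commuting with parallel transport along the real path) gives the identification $\vt_{\delta_0} = [\sqrt[a_0]{\varepsilon}, \sqrt[a_1]{\varepsilon^{-1}}]$.

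For (iii), I would use that $\delta_k = \delta_0$ followed (in the covering-space sense) by $k$ loops around $0$ in the base $\mathbb{C}^*$, so $\vt_{\delta_k}$ is obtained from $\vt_{\delta_0}$ by applying the $k$-th power of the total monodromy of $W_{\mathbf{a}}$ around $0 \in \mathbb{P}^1$. The key computation is to show this monodromy, up to Hamiltonian isotopy, equals the twist $\tau_{(2\pi/a_0,\,2\pi/a_1),\,(0,\infty)}$: going once around $0$ in the $s_0$-coordinate multiplies $w$ by $e^{2\pi i}$, which near the $z = 0$ end (where $w \sim z^{a_1}$) rotates the local fiber points by $e^{2\pi i/a_1}$ and near the $z = -1 \sim \infty$-in-$\phi$ end (where $w \sim (z+1)^{a_0}$, and $\phi$ sends $z=-1$ to $\infty$) rotates by $e^{2\pi i/a_0}$, while away from both ends the monodromy is isotopic to the identity because the fibration trivializes there. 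Packaging these two local fractional rotations via the cutoff function $h$ in \eqref{eq:twist} produces exactly the stated composition of twists at $0$ and $\infty$, and isotoping $\vt_{\delta_0}$ forward yields (iii). The main obstacle I anticipate is the last step: one must show the global monodromy is isotopic to the \emph{compactly supported} product of local twists and not some more complicated mapping class of the twice-punctured disc fiber — this requires knowing that the fibration $W_{\mathbf{a}}$ has no "extra" monodromy away from the two ends, which should follow from the explicit form of $w$ (its only branch points over $\mathbb{C}^*$ are at $q_{\mathbf{a}}$, already accounted for, and the behavior at $0, \infty$ is the pure fractional-power model), but making this rigorous is the technical heart of the argument.
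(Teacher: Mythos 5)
Your proposal is correct and follows essentially the same route as the paper's proof: an explicit affine model for $W_{\mathbf{a}}$, local power models at the two ends for (i), identification of $\vt_{\delta_0}$ with the real segment through the critical point for (ii), and $k$-fold fractional rotations at the two punctures for (iii). Two small remarks: the paper works in the chart $z \mapsto [z:1:-z-1]$, where $W_{\mathbf{a}}(z)=(-1)^{a_2}z^{a_0}(z+1)^{a_2}$ has its zero of order $a_0$ at $0$ and of order $a_1$ at $\infty$, so the statement's normalization holds without the M\"obius straightening your chart requires (and as written your parts (i) and (iii) disagree about which end $\phi$ sends to $\infty$, a recoverable slip); moreover the point you flag as the technical heart of (iii) is dispatched in the paper simply by taking $\varepsilon$ small so the fiber lies in the two local charts where $W_{\mathbf{a}}$ is the pure power model, while for (ii) the paper observes that with zero-dimensional fibers the thimble is just the component of $W_{\mathbf{a}}^{-1}(\operatorname{im}\delta_0)$ containing the critical point, checked against the four parity cases of the real graph, so no parallel-transport or anti-holomorphic involution argument is needed.
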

\begin{proof} Consider the coordinate $z \in \mathbb{C} \backslash \{0, -1\}$ with the chart $z \mapsto [z:1:-z - 1]$. Then the circuit potential has the form
 \begin{align} \label{eq:explicit} W_{\mathbf{a}} (z) = (-1)^{a_2} z^{a_0}(z + 1)^{a_2} .\end{align}
 One checks that the critical point of $W_{\mathbf{a}}$ is $\frac{a_0}{a_1}$ with value $a_0^{a_0}a_1^{a_1}a_2^{a_2}$ as in Lemma~\ref{lem:circuit_basics}.
 Locally  around $0$, $W_{\mathbf{a}} (z) \sim (-1)^{a_2} z^{a_0}$. Take $u \in \mathbb{R}_{< 0}$ to be sufficiently negative and choose the basepoint $* = e^u a_0^{a_0}a_1^{a_1}a_2^{a_2}$. We can identify the fiber $W^{-1}_\mathbf{a} (*)$ with the $a_0$-th roots of the positive real number $\varepsilon = e^u a_0^{a_0} a_1^{a_1} |a_2|^{a_2}$. For small $\varepsilon$, we may perturb the coordinate $z$ near $0$ so that $\phi$ identifies $W^{-1} (*)$ with $\sqrt[a_0]{\varepsilon} \units_{a_0}$. 
 Using the coordinate $w = z^{-1}$, we obtain
 \[ W_{\mathbf{a}} (w) = (-1)^{a_2} w^{a_1} (1 + w)^{a_2} \]
 so that an identical argument yields an identification of the fiber $W^{-1}_\mathbf{a} (*)$ with the $a_1$-th roots of the positive real number $\varepsilon^{-1}$ where again $\varepsilon = e^u a_0^{a_0} a_1^{a_1} |a_2|^{a_2}$. Thus Proposition~\ref{prop:1dthimble:1} holds.
 
 \begin{figure}[h]
 \begin{picture}(0,0)%
 \includegraphics{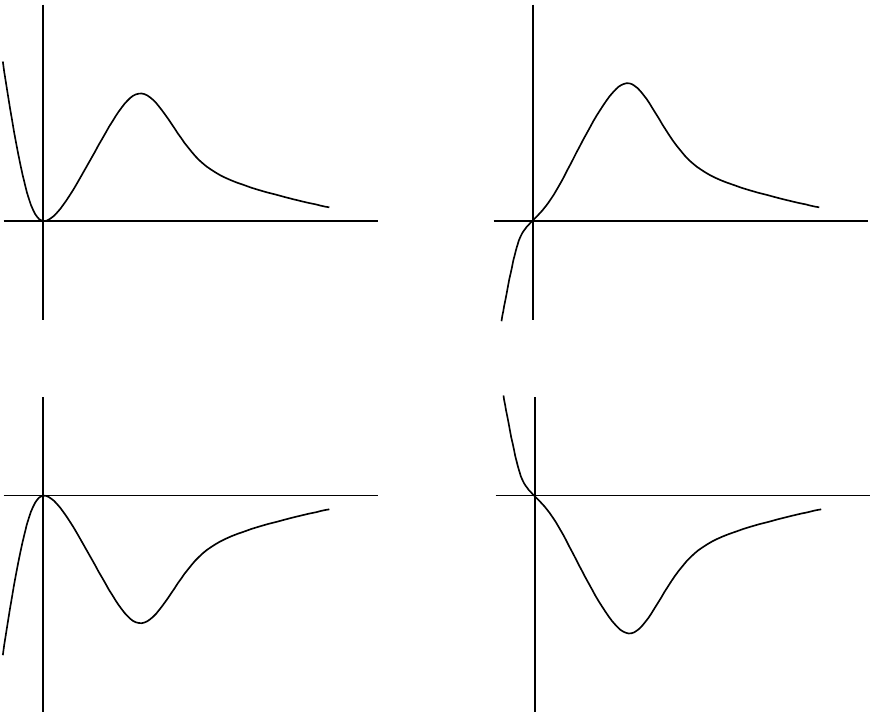}%
 \end{picture}%
 \setlength{\unitlength}{4144sp}%
\begin{picture}(3987,3276)(1605,-2875)
\put(2701,-1006){\makebox(0,0)[lb]{\smash{$(0,0)$}}}
\put(2701,-2606){\makebox(0,0)[lb]{\smash{$(0,1)$}}}
\put(4906,-2606){\makebox(0,0)[lb]{\smash{$(1,1)$}}}
\put(4906,-1006){\makebox(0,0)[lb]{\smash{$(1,0)$}}}
\end{picture}%
 \caption{\label{fig:reallocus} Graphs of $W_{\mathbf{a}}|_\mathbb{R}$ with parity $(a_0, a_1) \in \mathbb{Z}/2 \times \mathbb{Z} /2$. }
 \end{figure}
 
 Now, depending on the parity of $a_0$ and $a_2$, the graph of the restriction of $W_{\mathbf{a}}$ to the real locus is one of the four possibilities in Figure~\ref{fig:reallocus}. In each case, we have that the interval $\left[\sqrt[a_0]{\varepsilon}, \sqrt[a_1]{\varepsilon^{-1}}\right]$ maps to the connected component of $W^{-1}_{\mathbf{a}} (\textnormal{im} (\delta_0))$ which contains the critical point $a_0 / a_1$. But by Definition~\ref{eq:vt}, the vanishing thimble of a path for a Lefschetz fibration with domain a Riemann surface equals the component over the path containing the critical point. This verifies the claim~\ref{prop:1dthimble:2}.
 
For \ref{prop:1dthimble:3}, we note that $W_\mathbf{a}$ has zeros of order $a_0$ and $a_1$ at $z = 0$ and $z = \infty$ respectively. Thus, in local coordinates around $0$ and $\infty$, the fiber $W^{-1}(e^{i \theta} *)$ equals  $e^{i \theta  / a_0}\sqrt[a_0]{\varepsilon} \units_{a_0} \cup e^{i \theta  / a_1}\sqrt[a_1]{\varepsilon^{-1}} \units_{a_1} $. As $\delta_k$ is isotopic, relative its boundary, to the concatenation of $\delta_0$ with a $k$-fold counter-clockwise loop around $0$, we have that the vanishing thimble $\vt_{\delta_k}$ is that of $\delta_0$ twisted by $\tau_{(2\pi k/ a_0, 2 \pi k/ a_1), (0, \infty)}$ (here we assume that $\varepsilon$ is sufficiently small so that the fiber $W_{\mathbf{a}}^{-1} (*)$ is close to $0$ and $\infty$ in local charts).
\end{proof}

We apply Proposition~\ref{prop:1dthimble} to prove the base case of our induction argument for Theorem~\ref{thm:Amodel}. 

\begin{proposition} \label{prop:thma1d}
	Theorem~\ref{thm:Amodel} holds in dimension $d = 1$.
\end{proposition}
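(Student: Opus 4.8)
The plan is to use Proposition~\ref{prop:1dthimble} to reduce the claim to a combinatorial computation inside the punctured plane $\mathbb{C}^* \subset \mathbb{C} \backslash \{0,-1\}$, where all objects are explicit curves. By Proposition~\ref{prop:unfoldedfuk}, the target category $\mathcal{A}_{\mathbf{a}, \nu, n}$ is the directed subcategory of $\mathcal{F}(W_{\mathbf{a}}^{-1}(*))$ on the collection $\mathbf{B}(0) = \{L_0, \ldots, L_{n-1}\}$, where $L_k$ is the brane supported on the vanishing cycle $\vc_{\delta_k}$; I will take the $L_k$ to be Hamiltonian representatives given by the boundary spheres of the thimbles $\tau^k_{(2\pi/a_0, 2\pi/a_1), (0,\infty)}(\vt_{\delta_0})$ described in Proposition~\ref{prop:1dthimble:3}. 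The fiber is the $0$-manifold $\sqrt[a_0]{\varepsilon}\,\units_{a_0} \cup \sqrt[a_1]{\varepsilon^{-1}}\,\units_{a_1}$, so each vanishing cycle $\vc_{\delta_k}$ is a pair of points (one near $0$, one near $\infty$), and the Floer complex $CF^*(L_i, L_j)$ has one generator for each pair consisting of a point of $\vc_{\delta_i}$ that coincides with a point of $\vc_{\delta_j}$. Since $W_{\mathbf{a}}$ has a zero of order $a_0$ at $z=0$, the $k$-th thimble meets the $\units_{a_0}$-orbit in the point $e^{2\pi i k/a_0}\sqrt[a_0]{\varepsilon}$, and similarly at $\infty$; hence the intersection points $\vc_{\delta_i} \cap \vc_{\delta_j}$ for $i < j$ are indexed exactly by the monomials $v_0^m$ (at the $0$-end, one for each way $e^{2\pi i i/a_0} = e^{2\pi i j/a_0}$ up to rotating forward by $m$ with $a_0 \mid (j-i) - $ shift, i.e. counted by the exponents with $\wt = j-i$) and by the monomials in $v_1$ at the $\infty$-end. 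Matching this up with the description of $R_{\mathbf{a}}$, whose weight-$(j-i)$ part has basis the monomials $v_0^m v_1^\ell$ with $a_0 m + a_1 \ell = j - i$ (there is no $v_2$ contribution since $-a_2 = \Vol(\mathbf{a}) > n$), gives on the level of graded vector spaces the isomorphism $\Xi_{\mathbf{a}, n}$ together with conclusions (i) and (ii).

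The substantive step is to check that $\Xi_{\mathbf{a}, n}$ is an $A_\infty$-functor, i.e. that all products $\mu^k$ agree. On the $B$-side, $\mathcal{C}_{\mathbf{a}, \nu, n}$ is formal with $\mu^k = 0$ for $k \geq 3$ and $\mu^2$ simply polynomial multiplication in $R_{\mathbf{a}}$ restricted to the directed collection. On the $A$-side, since the fiber is zero-dimensional, $\mathcal{F}(W_{\mathbf{a}}^{-1}(*))$ has no discs and hence vanishing $\mu^{\geq 2}$ on the nose — but the relevant products are those of the twisted/directed category, which are computed by counting holomorphic discs (sections) in the total space $E$ with boundary on the thimbles, exactly in the manner of Seidel's construction. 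So I must show that the only such discs are the "obvious" triangles realizing the multiplication $v_0^{m}\cdot v_0^{m'} = v_0^{m+m'}$ near $0$ and $v_1^\ell \cdot v_1^{\ell'} = v_1^{\ell+\ell'}$ near $\infty$, with no mixed products and no higher polygons, and that the signs work out. Here the model $W_{\mathbf{a}}(z) = (-1)^{a_2} z^{a_0}(z+1)^{a_2}$ is completely explicit: near $z=0$ it is $(-1)^{a_2}z^{a_0}$ and near $z=\infty$ (in $w = z^{-1}$) it is $(-1)^{a_2}w^{a_1}$, so after the branched-cover unfolding the local pictures are standard $A_n$-type Milnor fibers where the moduli of triangles are known to be unique and the $A_\infty$-structure is that of $\symalg{V}$ — this is precisely the content behind the formality statement used in Theorem~\ref{thm:Bmodel}, and I will invoke the Koszul/formality computation there (via Proposition~\ref{prop:duality} and the resolution argument) rather than redo it. Separation of the two ends — no holomorphic polygon can have boundary limits at both the $0$-cluster and the $\infty$-cluster of marked points — follows from a maximum-principle / winding argument using $W_{\mathbf{a}}$ itself as a holomorphic function whose argument is pinned on the paths $\delta_k$.

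Finally I will verify the divisor equation~\eqref{eq:divisor}. With $d=1$ and signature $(1,0)$ there are two components $D_0^0 = C_0|_{\bar{P}_1}$ and $D_0^1 = C_1|_{\bar{P}_1}$ in $D_0 = a_0 C_0 + a_1 C_1$, sitting at $z = 0$ (order $a_0$, so $\tilde{\pi}_0$ is regular on the $b_0 = a_0$ cover there) and $z = \infty$ (order $a_1$). For $j = 0$: the moduli space $\mathcal{M}_{a_0}^{p,0}(\pi)$ counts sections of $\tilde{\pi}_0$ over the disc $S$ with boundary on $\vc_{\delta_0}$ that pass through $D_0^0$ at the center, asymptotic to $p \in \vc_{\delta_0} \cap \tilde{\vc}$. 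In the local chart $z^{a_0}$, after $a_0$-fold unfolding this is the degree-one section of the $A$-type Milnor fibration hitting the origin, which by a direct count has a unique solution exactly when $p$ is the generator $v_0$ of $CF^*(\vc_{\delta_0}, \vc_{\delta_{a_0}})$ corresponding to $z^{a_0} = q_{\mathbf{a}}\cdot(\text{leading term})$, and no solution otherwise; the analogous statement at $\infty$ gives $\kappa_{a_1}^1(\Xi(v_1)) = 1$. The main obstacle throughout is the honest verification of the holomorphic-disc counts — both that the triangle moduli spaces are exactly singletons and that no stray polygons contribute — but because the potential is this simple monomial-times-monomial function, I expect this to reduce cleanly to the one-variable $A_n$ computation already implicit in \cite{seidel} and in the $B$-model formality argument of Theorem~\ref{thm:Bmodel}.
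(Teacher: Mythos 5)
Your identification of the objects and of the morphism spaces via Proposition~\ref{prop:1dthimble} agrees with the paper, and your sketch of the divisor count is in the right spirit. The genuine gaps are at the two places where the work actually happens. First, the product structure. By definition $\mathcal{A}_{\mathbf{a},\nu,n}$ is a directed subcategory of $\mathcal{F}(W_{\mathbf{a}}^{-1}(*))$, so the compositions live in the zero-dimensional fiber, and they are \emph{not} zero there: the whole point is that $m_2\bigl(p_r^{\frac{k-j}{a_r}}, p_r^{\frac{j-i}{a_r}}\bigr) = p_r^{\frac{k-i}{a_r}}$, which the paper establishes by an elementary perturbation argument at coincident points, while all differentials and higher operations are killed by a degree argument (for $\nu = 0$ every generator has degree $0$, and changing $\nu$ does not change the moduli spaces). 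Your claim that the fiber category has ``vanishing $\mu^{\geq 2}$ on the nose'' is therefore false as stated, and your reroute through section counts in the total space — permissible in principle — is never carried out: deferring it to ``the one-variable $A_n$ computation'' and, worse, to ``the B-model formality argument of Theorem~\ref{thm:Bmodel}'' is not a proof. Theorem~\ref{thm:Bmodel} is a statement about Ext-algebras of equivariant sheaves; it cannot control holomorphic disc counts on the $A$-side, and invoking it here would make the mirror equivalence you are proving circular.

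Second, you never verify the gradings, which are part of the asserted isomorphism with $\mathcal{C}_{\mathbf{a},\nu,n}$: on the $B$-side $v_r$ has degree $2\nu(r)$, so on the $A$-side one must compute the absolute indices of the intersection points with respect to $\eta_\nu^2$; the paper does this explicitly (using the order of $\eta_\nu$ at $0$ and $\infty$ and a small rotation perturbation) and finds $i\bigl(p_r^{\frac{k-j}{a_r}}\bigr) = 2\nu(r)\frac{k-j}{a_r}$, exactly matching $\deg\bigl(v_r^{(k-j)/a_r}\bigr)$. Without this, $v_r^s \mapsto p_r^s$ is only an isomorphism of weight-graded, not degree-graded, spaces. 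Two smaller points: your vector-space matching also needs $k - j \leq n - 1 < \Vol(\mathbf{a})$ to exclude mixed monomials $v_0^m v_1^\ell$ with $m,\ell > 0$, not only $v_2$; and in the verification of equation~\eqref{eq:divisor} the nontrivial step is uniqueness of the section (in the paper, a maximum-principle argument confining the image to the union of small discs over $0$, which also forces $\kappa^1_{a_0} = 0$), so ``a direct count has a unique solution'' needs that argument spelled out rather than asserted.
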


\begin{proof}
	We first describe the equivalence $\Xi_{\mathbf{a}, n}$ in equation~\eqref{eq:equivalence}. By Proposition~\ref{prop:duality}, it suffices to show this for $\mathbf{a}$ with signature $(1,0)$ as the case of $(0,1)$ is Koszul dual. On generating objects, we define $\Xi_{\mathbf{a}, n} (R_{\mathbf{a}} (k)) =L_{k}$ where $L_k$ is the vanishing cycle $\vc_{\delta_k}$ endowed with a brane structure relative to $\eta_\nu^2$. For $d = 1$, the vanishing cycles and the fiber $W^{-1}_{\mathbf{a}}$ are both zero dimensional. Furthermore, by Proposition~\ref{prop:1dthimble:1} and \ref{prop:1dthimble:3} we have that there is a small $\varepsilon > 0$ and a trivialization $P_1 = \mathbb{C} \backslash \{0, -1\}$ for which $L_k = \{p_0, p_1\}$ where
	\begin{align*} p_0 & =  e^{2 \pi k i / a_0} \sqrt[a_0]{\varepsilon} , & p_1 & = e^{ - 2 \pi k i / a_1} \sqrt[a_1]{\varepsilon^{-1}} . \end{align*} 
	 The grading of the vanishing cycles and their intersection points is obtained by considering the grading of the vanishing thimble $\vt_{\delta_{k}}$ relative to $\eta_\nu^2$ at $p_0$ and $p_1$.
	%New work here
	Given the trivialization $[z: 1 : -z - 1]$ on $P_1$, we have that 
	\begin{align*} \eta_{\nu}  = z^{\nu (0)} (-1 - z)^{\nu (2)} \frac{\tn z}{z} . 
	\end{align*} 
	Proposition~\ref{prop:1dthimble:2}, we may grade $\vt_{\delta_0}$ by taking a real valued argument for $\log (\eta_{\nu}^2 |_{T \vt_{\delta_0}})$. Precisely, letting $\theta_k (p_i)$ be the phase of $\eta_\nu^2$ at $p_i$ on $L_k$, this gives 
	\begin{align*} \theta_0 (p_0) = 0 = \theta_0 (p_1). \end{align*} 
	Proposition~\ref{prop:1dthimble:3} gives the remaining thimbles $\vt_{\delta_k}$ in terms of the monodromy twists $\tau_{(2\pi/a_0, 2 \pi a_1), (0, \infty)}$ of $\vt_{\delta_0}$. 	To accomplish transversality between two vanishing thimbles, taking small $\epsilon > 0$, we slightly perturb $\vt_{\delta_k}$ to $\tilde{\vt}_{\delta_k}$ near $p_r$ by rotating $e^{-2 \pi i \epsilon / a_r}$. As, the order of $\eta_{\nu}$ at $0$ and $\infty$ is $\nu (0) - 1$ and $\nu (1) - 1$ respectively, one observes that the induced grading of $\tilde{\vt}_{\delta_k}$ is then
	\begin{align*}
	\theta_k (p_0 ) & = 2 \pi k \left( \nu(0) - \epsilon \right)  / a_0 , & \theta_k (p_1 ) & = 2 \pi k \left( \nu (1) - \epsilon \right) / a_1.
	\end{align*}
	
	Given $j < k$ if $L_j$ intersects $L_k$ amongst the points $\sqrt[a_0]{\varepsilon} \units_{a_0}$, respectively $\sqrt[a_1]{\varepsilon} \units_{a_1}$, we denote the intersection $p_0^{\frac{k - j}{a_0}}$, respectively $p_1^{\frac{k - j}{a_1}}$. Then
		\begin{align*}
			\Hom_{\mathcal{A}_{\mathbf{a}, \nu, n} }  (L_j, L_k ) & = \begin{cases} 0 & \textnormal{if } j \not\equiv k\pmod*{a_0} \textnormal{ and } j \not\equiv k \pmod*{a_1} , \\ \mathbb{K} \cdot p_0^{\frac{k - j}{a_0}} & \textnormal{if } j \equiv k\pmod*{a_0} \textnormal{ and } j \not\equiv k \pmod*{a_1} ,
			\\ \mathbb{K} \cdot p_1^{\frac{k - j}{a_1}} & \textnormal{if } j \not\equiv k\pmod*{a_0} \textnormal{ and } j \equiv k \pmod*{a_1} ,
			\\ \mathbb{K} \cdot \left\{ p_0^{\frac{k - j}{a_0}}, p_1^{\frac{k - j}{a_1}} \right\} & \textnormal{if } j \equiv k\pmod*{a_0} \textnormal{ and } j \equiv k \pmod*{a_1} .
			\end{cases}
		\end{align*} 
The absolute index of these intersections point gives their grading and, for $r \in \{0, 1\}$ and using \cite[Section~13c]{seidel}, they are computed as
\begin{align*}
i \left( p_r^{\frac{k - j}{a_r}} \right) & = \left\lfloor \frac{1}{\pi} \left( \theta_k (p_r ) - \theta_j (p_r )  \right) \right\rfloor + 1 , \\ & = \left\lfloor \frac{1}{\pi} \left( \frac{2 \pi (k - j) (\nu (r) - \epsilon)}{a_r}  \right) \right\rfloor + 1, \\ & = 2 \nu (r) \frac{k - j}{a_r} . 
	\end{align*}
Taking the case of $\nu = 0$, degree considerations imply that the differentials and higher compositions all vanish. Changing $\nu$, however, does not affect the moduli spaces determining these compositions, so they will vanish for any choice. On the other hand, an elementary perturbation argument shows that
\[ m_2 \left(p^{\frac{k - j}{a_r}}_{r}, p^{\frac{j - i}{a_r}}_{r} \right) = p^{\frac{k - i}{a_r}}_{r}  \] 
where $m_2$ denotes the composition determined by the $A_\infty$-structure. In this case, $m_2$ agrees with the second $A_\infty$ map, but generally may differ by a sign depending on the parity of the first factor (see \cite[Section 1a]{seidel}).

One then extends the functor $\Xi_{\mathbf{a}, n}$ to morphisms by taking  
	\begin{align*} \Xi_{\mathbf{a}, n} (v_0^{s}) & = p_0^s, \\ \Xi_{\mathbf{a}, n} (v_1^s) & = p_1^s. \end{align*} It is clear that $\Xi_{\mathbf{a}, n}$ then defines the isomorphism of categories specified in \eqref{eq:equivalence} and satisfying Theorem~\ref{thm:Amodel:1} and \ref{thm:Amodel:2}. We note that, since $0 \leq i , j \leq n < \Vol (\mathbf{a})$ and  $\Xi_{\mathbf{a}, n} (v_2)$ is a morphism from $L_i$ to $L_{i + \Vol (\mathbf{a})}$ we do not encounter this morphism in the $n$-unfolded category $\mathcal{A}_{\mathbf{a}, \nu, n}$.
	
	We now show that equation~\eqref{eq:divisor} holds for $d = 1$. This can easily be seen when $\mathbf{a}$ has signature $(1,0)$ or $(0,1)$, and we give the argument for the former case. Using the trivialization of $P_1 = \mathbb{C} \backslash \{0, -1\}$, denote $z_0, z_{-1}$ and $z_\infty$ for the compactification points on $\{Z_0 + Z_1 + Z_2 = 0\} \subset \mathbb{P}^2$. The divisors $D_0$ and $D_1$ from Definition~\ref{def:atomLP} are then $a_0 z_0 + a_1 z_{\infty}$ and $-a_2 z_{-1}$ respectively. Consider the pullback in equation~\eqref{eq:pullback2} with $n = a_0$. Write $D$ for a small disc around $0$ and $D^*$ for $D \backslash \{0\}$. For a local disc chart $D  \subset P_1 \cup \{z_0\}$ around $z_0$ we have the diagram 
	\begin{equation} \label{eq:pullback3}
	\begin{tikzpicture}[baseline=(current  bounding  box.center), scale=1.5]
	\node (A) at (0,1) {$\tilde{D}$};
	\node (B) at (1,1) {$D$};
	\node (C) at (0,0) {$D$};
	\node (D) at (1,0) {$D$};
	\path[->,font=\scriptsize]
	(A) edge node[above]{$\phi$} (B)
	(A) edge node[left]{$\tilde{\pi}|_{\tilde{D}}$} (C)
	(B) edge node[right]{$z^{a_0}$} (D)
	(C) edge node[above]{$z^{a_0}$} (D);
	\end{tikzpicture} 
	\end{equation}
where the $\tilde{D} \subset \tilde{E}$ is the union $\cup_{i=1}^{a_0} D$ of discs with common intersection point $0$. Parameterizing the $l$-th disc by $w_l$ we have that $\phi (w_l) = \zeta^l w_l$ where $\zeta = e^{2 \pi l i/ a_0}\in \units_{a_0}$ and $\tilde{\pi} (w_l) = w_l$. 

Now, $p = p_0^{\frac{k - j}{a_0}} \in \Hom_{\mathcal{A}_{\mathbf{a}, \nu, n}}(L_j, L_k)$ corresponds to a point of the form $\zeta^l \sqrt[a_0]{\varepsilon}$ so that the pullback of $p$ along $\phi$ is on the boundary of the radius $\sqrt[a_0]{\varepsilon}$ circle on the $l$-th disc in $\tilde{D} = \cup_{i = 1}^{a_0} D$. Thus the moduli space $\mathcal{M}^{p, 0}_{a_0}$ contains the unique section $u_p :  \sqrt[a_0]{\varepsilon}  D \to \tilde{D}$  sending $z$ to the $w_l = z$ in the $l$-th component. It is clear that $u_p$ is the unique section of $\tilde{\pi}|_{\tilde{D}}$ with boundary condition containing the boundary of the $l$-th disc. To see that $u_p$ is unique up to isomorphism in $\mathcal{M}^{p,0}_{a_0}$, assume otherwise and compose $u :  \sqrt[a_0]{\varepsilon}D \to \tilde{E}$ with $\phi$ and $z^{a_0}$ to obtain a map $\tilde{u} := (\phi \circ u)^{a_0} : \sqrt[a_0]{\varepsilon}D \to \mathbb{C}$. Since $\tilde{u}$ sends the boundary of $\sqrt[a_0]{\varepsilon}D$ to $\varepsilon D$, the maximum principle implies that $\textnormal{im} (\tilde{u}) \subset \varepsilon D$. This in turn shows that $\textnormal{im} (u) \subset \tilde{D}$ and $\mathcal{M}^p_{a_0} = \mathcal{M}_{a_0}^{p, 0} = \{u\}$. It follows that
	\begin{align*} 
		\kappa_{a_0}^i (p) = \begin{cases} 1 & \textnormal{if } i = 0, \\ 0 & \textnormal{if } i = 1.
	\end{cases}
	\end{align*}
An identical argument applies for $a_1$ and $p = p_1^{\frac{k - j}{a_1}}$ which verifies equation~\eqref{eq:divisor} and the proposition.
\end{proof}
To conclude this section, we describe holomorphic polygons in $P_1$ with boundary conditions labeled by thimbles and their intersection points. We first establish the general notation. Let $Y$ be a symplectic surface with a regular compatible complex structure $J$ and $F \subset Y$ a finite set of points. Suppose $\mathcal{L} = \{L_0, L_1, \ldots, L_{n - 1}\}$ is an ordered set of graded Lagrangian submanifolds, possibly with boundary, such that $\partial L_i  \subset F$ and $L_i \backslash \partial L_i \subset Y \backslash F$. We equip each $L_i \in \mathcal{L}$ with a Hamiltonian $H_i : Y \to \mathbb{R}$ for which $\tn H_i |_F = 0$ and write $\tilde{H} = \{H_0, \ldots, H_n\}$ for their union. Take $\psi^t_i$ to be the time $t$ flow map of $H_i$

\begin{definition} \label{defn:transverse}
	Given a finite ordered collection of Lagrangian branes $\mathcal{L} = \{L_i\}_{0 \leq i < n}$ along with Hamiltonians $\mathcal{H} = \{H_i\}_{0 \leq i < n}$, we call the pair $(\mathcal{L}, \mathcal{H})$ generic if 
	\begin{enumerate}[label=(\roman*),ref=\thedefinition(\roman*)]
		\item \label{defn:transverse:1} Lagrangians in $\mathcal{L}$ pairwise intersect transversely,	
		\item \label{defn:transverse:2} pairwise intersections $\psi^t_{j} L_j \cap \psi_k^t L_k$ have a constant number of points for $t \geq 0$,
		\item \label{defn:transverse:3} for every subset $I \subset \{0, \ldots, n - 1\}$ and every $t > 0$, we have that $\{\psi^t_i (L_i) : i \in I\}$ are transverse submanifolds in $Y \backslash F$.
	\end{enumerate}
\end{definition}
By property~\ref{defn:transverse:2} we can label the intersection points of two Lagrangians  $\psi_j^t L_j \cap \psi_k^t L_k$ for any time $t$ by $L_j \cap L_k$. We write the graded vector space generated by this intersection as
\begin{align*}
(L_j, L_k) := \oplus_{p \in L_j \cap L_k} \mathbb{K}[i(p)].
\end{align*}

Now suppose that $D$ is a pointed disc with marked points $\zeta_0, \ldots, \zeta_m \in \partial D$ oriented counter-clockwise. Given a set map $f : \{0, \ldots, m\} \to \{0, \cdots, n\}$, we say that $(D, \{\zeta_i\} , f)$ is a \textbf{labeled pointed disc}. Denote the arc on $\partial D$ from $\zeta_i$ to $\zeta_{i +1}$ as $\partial_i D$.
\begin{definition} \label{def:directed}
	Given a labeled pointed disc $(D, \{\zeta_i\} , f)$ and a generic pair $(\mathcal{L}, \mathcal{H})$ we say that a  $J$-holomorphic map $u : D \to Y$ is directed relative to $(\mathcal{L}, \mathcal{H})$ if
	\begin{enumerate} 
		\item the Maslov index of $u$ is $2$,
		\item $u (\partial_i D) \subset \psi^1 L_{f(i)}$,
		\item $f$ is an increasing function.
	\end{enumerate}
	If $u (\zeta_i) = p_i \in  (L_{f (i)},  L_{f(i + 1)})$ we write \begin{align*} m^u (p_{m-1}, \ldots, p_0) = p_{m} . \end{align*}
\end{definition} 
Turning our attention back to the one-dimensional circuit, we consider  $(Y, F) = (P_1, W^{-1}_\mathbf{a} (*))$ and write
\begin{align*}
\mathcal{T}_{\mathbf{a}, n}	 = \{\vt_{\delta_0}, \ldots, \vt_{\delta_{n -1}} \}.
\end{align*}
\begin{figure}[t]
\begin{picture}(0,0)%
\includegraphics{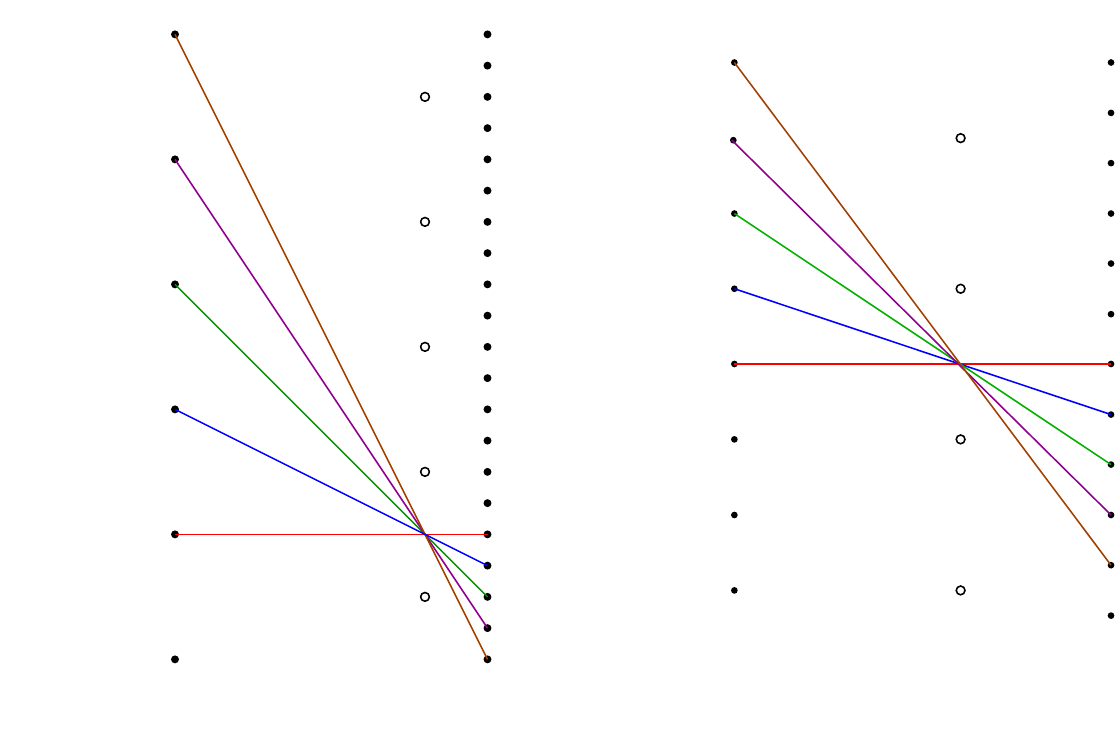}%
\end{picture}%
\setlength{\unitlength}{3947sp}%
\begin{picture}(5353,3528)(361,-3430)
\put(1376,-3361){\makebox(0,0)[lb]{\smash{$(a_0, a_1) = (1,4)$}}}
\put(4276,-3361){\makebox(0,0)[lb]{\smash{$(a_0,a_1) = (2,3)$}}}
\put(876,-1861){\makebox(0,0)[lb]{\smash{$\tilde{\mu}_1$}}}
\put(876,-1261){\makebox(0,0)[lb]{\smash{$\tilde{\mu}_2$}}}
\put(876,-661){\makebox(0,0)[lb]{\smash{$\tilde{\mu}_3$}}}
\put(876,-61){\makebox(0,0)[lb]{\smash{$\tilde{\mu}_4$}}}
\put(3526,-1636){\makebox(0,0)[lb]{\smash{$\tilde{\mu}_0$}}}
\put(3526,-1261){\makebox(0,0)[lb]{\smash{$\tilde{\mu}_1$}}}
\put(3526,-886){\makebox(0,0)[lb]{\smash{$\tilde{\mu}_2$}}}
\put(3526,-526){\makebox(0,0)[lb]{\smash{$\tilde{\mu}_3$}}}
\put(3526,-184){\makebox(0,0)[lb]{\smash{$\tilde{\mu}_4$}}}
\put(876,-2462){\makebox(0,0)[lb]{\smash{$\tilde{\mu}_0$}}}
\end{picture}%
	\caption{\label{fig:tildemu}Images of the paths $\tilde{\mu}_i$.} 
\end{figure}
Using Proposition~\ref{prop:1dthimble}, we now describe a collection of paths 
\begin{align} \label{eq:mupaths} \mathbf{\mu}_{\mathbf{a}, n} = \{\mu_0, \ldots, \mu_{n - 1 }\}
\end{align} 
in $Y$ isotopic, relative to $F$, to the thimbles in $\mathcal{T}_{\mathbf{a},n}$. Utilizing the parametrization $\mathbb{C} \backslash \{0, -1\} \cong P_1$, take $C = \ln (\varepsilon )$. For any $k \in \mathbb{Z}$ define the line segment 
\begin{align*}
\tilde{\mu}_k : [C / a_0 , -C / a_1] \to \mathbb{C}
\end{align*} 
via
\begin{align} \label{eq:tildemudef}
	\tilde{\mu}_k (t) = t \left( 1 + \frac{2\pi k }{C} i \right).
\end{align}

Define the path $\mu_k$ via $\mu_k (t) := \exp ({\tilde{\mu}_k (t)})$.
\begin{proposition} \label{prop:intnumb}
	The path $\mu_k$ is isotopic to $\vt_{\delta_k}$ in $P_1$ relative to $W^{-1}_\mathbf{a} (*)$. Furthermore, for $j < k$, we have
\begin{align} \label{eq:numberintersections}
\left| \mu_j \cap \mu_k \right| = \left\lfloor \frac{k - j}{a_0} \right\rfloor + \left\lfloor \frac{k - j}{a_1} \right\rfloor + 1.
\end{align}
\end{proposition}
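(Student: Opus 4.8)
The plan is to pass to the logarithmic coordinate $\zeta = \ln z$, in which the paths $\tilde\mu_k$ are literally straight lines and everything becomes linear. First I would record the elementary geometry. Since $\varepsilon$ is taken small, $C = \ln\varepsilon < 0$, so $[C/a_0, -C/a_1]$ is a genuine interval containing $0$; the endpoints of $\mu_k$ are $\exp(C/a_0 + 2\pi k i/a_0) = \varepsilon^{1/a_0} e^{2\pi k i/a_0}$ and $\exp(-C/a_1 - 2\pi k i/a_1) = \varepsilon^{-1/a_1} e^{-2\pi k i/a_1}$, which lie in $W_{\mathbf a}^{-1}(*) = \sqrt[a_0]{\varepsilon}\,\units_{a_0} \cup \sqrt[a_1]{\varepsilon^{-1}}\,\units_{a_1}$ by Proposition~\ref{prop:1dthimble:1}; the modulus $|\mu_k(t)| = e^{t}$ is monotone, so the interior of $\mu_k$ meets no fiber point; and the line $\tilde\mu_k$ meets the imaginary axis only at $\zeta = 0$, hence avoids $\exp^{-1}(\{-1\}) = \{(2n+1)\pi i : n \in \mathbb{Z}\}$, so $\mu_k$ is an embedded arc in $P_1 \cong \mathbb{C} \setminus \{0, -1\}$ with interior disjoint from $W_{\mathbf a}^{-1}(*)$.

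For the isotopy with $\vt_{\delta_k}$, the case $k = 0$ is an equality: $\tilde\mu_0$ is the real segment from $C/a_0$ to $-C/a_1$, so $\mu_0 = [\varepsilon^{1/a_0}, \varepsilon^{-1/a_1}] = \vt_{\delta_0}$ by Proposition~\ref{prop:1dthimble:2}. For general $k$, Proposition~\ref{prop:1dthimble:3} gives $\vt_{\delta_k} \simeq \tau_{(2\pi/a_0, 2\pi/a_1), (0, \infty)}^k(\vt_{\delta_0})$. The twist $\tau^k$ preserves moduli and lifts through $\exp$ to a compactly supported shift of $\zeta$ in the imaginary direction; applied to $\tilde\mu_0$ it yields a path with the same endpoints as $\tilde\mu_k$, monotone in $\Re \zeta$, fixing $\zeta = 0$ (the point $z = 1$, where both twist cutoffs vanish), and avoiding the punctures $(2n+1)\pi i$. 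Since $\tilde\mu_k$ shares these three properties, and the closed half-planes $\{\Re\zeta \le 0\}$ and $\{\Re\zeta \ge 0\}$ are simply connected and puncture-free, splitting both paths at $\zeta = 0$ exhibits them as homotopic rel endpoints in $\mathbb{C}\setminus\exp^{-1}(\{-1\})$; pushing down by the covering $\exp$, $\mu_k$ is homotopic rel endpoints in $P_1$ to $\tau^k(\vt_{\delta_0})$, hence to $\vt_{\delta_k}$. As these are embedded arcs interior-disjoint from the finite set $W_{\mathbf a}^{-1}(*)$, homotopy rel endpoints upgrades to ambient isotopy rel $W_{\mathbf a}^{-1}(*)$, which is the first claim.

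For the count, $\exp(\tilde\mu_j(s)) = \exp(\tilde\mu_k(t))$ forces $s = t$ (equal real parts), and then $\tilde\mu_j(s) - \tilde\mu_k(s) = \tfrac{2\pi (j - k) s}{C} i \in 2\pi i \mathbb{Z}$, i.e. $s = \tfrac{nC}{j - k}$ for some $n \in \mathbb{Z}$; requiring $s \in [C/a_0, -C/a_1]$ and using $C < 0$, $j < k$ gives exactly $-\tfrac{k - j}{a_0} \le n \le \tfrac{k - j}{a_1}$. Distinct admissible $n$ give distinct parameters $s$, hence distinct intersection points (the arc $\mu_j$ being injective), so $|\mu_j \cap \mu_k|$ is the number of such integers, which is $\lfloor \tfrac{k-j}{a_1}\rfloor + \lfloor \tfrac{k-j}{a_0}\rfloor + 1$ since $\lceil -x \rceil = -\lfloor x \rfloor$. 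The interior crossings are transverse because $\tilde\mu_j$ and the translate $\tilde\mu_k + 2\pi i n$ are non-parallel lines and $\exp$ is a local diffeomorphism, while the two extremal values of $n$, attainable only when $a_0 \mid k - j$ (resp. $a_1 \mid k - j$), account for the case where $\mu_j$ and $\mu_k$ share an endpoint in the fiber.

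The integer count and the verification that $\mu_k$ is an embedded arc are routine once one is in the $\zeta$-picture. The step needing care is the isotopy claim: one must check that the chosen lift of $\tau^k(\vt_{\delta_0})$ has the same endpoints as $\tilde\mu_k$ and crosses the imaginary axis only at $\zeta = 0$, so that the simply-connected-half-plane argument applies, and then invoke correctly that homotopic embedded arcs interior-disjoint from a fixed finite subset of a surface are isotopic rel that subset. I expect this to be the main obstacle, though it is largely bookkeeping once the logarithmic model is fixed.
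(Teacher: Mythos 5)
Your overall route is essentially the paper's: pass to the logarithmic cover, use Proposition~\ref{prop:1dthimble:3} to describe $\vt_{\delta_k}$ as a twist of $\vt_{\delta_0}$, and reduce the intersection count to counting integers $n$ with $\tilde{\mu}_j\cap(\tilde{\mu}_k+2\pi i n)\neq\emptyset$; your count argument is the same as the paper's and is correct. The one place where the paper differs is the isotopy claim: there the lifted twist is identified (up to isotopy) with the shear $\tilde{\tau}_C(z)=\tfrac{2\pi i}{C}\Re(z)+z$, which carries $\tilde{\mu}_0$ exactly onto $\tilde{\mu}_k$, whereas you argue by a homotopy-then-isotopy upgrade. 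That upgrade, as you state it, is not a valid principle: two embedded arcs with common endpoints whose interiors avoid a finite set $F$ and which are homotopic rel endpoints in the ambient surface need \emph{not} be isotopic relative to $F$ --- in a disc with one interior puncture, the two boundary-to-boundary arcs passing on either side of the puncture are homotopic rel endpoints in the disc but not isotopic in its complement. To conclude isotopy relative to $W_{\mathbf{a}}^{-1}(*)$ you must produce the homotopy inside $P_1$ with the \emph{other} fiber points removed, and your half-plane construction does not address this, since the lifts of the remaining fiber points (at $\Re\zeta=C/a_0$ and $\Re\zeta=-C/a_1$) are not excluded. Relatedly, the closed half-planes $\{\Re\zeta\le 0\}$, $\{\Re\zeta\ge 0\}$ are not puncture-free: the punctures $(2n+1)\pi i$ lie on their common boundary.

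The gap is repairable along your own lines: both arcs (the lift of $\tau^k(\vt_{\delta_0})$ through $0$ and $\tilde{\mu}_k$) have interiors contained in the open strip $C/a_0<\Re\zeta<-C/a_1$, and all lifts of fiber points lie on the two boundary lines of that strip, so one can run the splitting argument in the closed strip minus the punctures and minus the fiber-point lifts other than the two shared endpoints; this region (a convex set with part of its boundary deleted) is still simply connected, giving a homotopy rel endpoints that avoids the fiber, after which the classical ``homotopic embedded arcs in a surface are isotopic'' fact applies in the punctured surface. With that correction your proof is sound; without it, the final sentence of your isotopy step is an appeal to a false statement rather than a proof.
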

\begin{proof}
Consider the space $S := \mathbb{C} \backslash \left(\pi i + 2 \pi i\mathbb{Z}\right)$ and the cover $\exp : S \to P_1$ where we have identified $P_1$ with $\mathbb{C} \backslash \{0, -1\}$. By Proposition~\ref{prop:1dthimble:1} we have that $\ln (\vt_{\delta_0}) \subset S$ is $\textnormal{im} (\tilde{\mu}_0 ) + 2\pi i \mathbb{Z}$. Furthermore, one observes that the twist map $\tau_{(2 \pi / a_0, 2 \pi / a_1)}$ on $P_1$ lifts to a map isotopic to the shear map 
\begin{align} \label{eq:shear} \tilde{\tau}_C (z) = \frac{2\pi i}{C} \Re (z) + z. \end{align} 
Thus by Proposition~\ref{prop:1dthimble:3} we have that $\vt_{\delta_k}$ is isotopic to $\exp (\tilde{\tau}_C^k (\textnormal{im} (\tilde{\mu}_0 ))) = \exp (\textnormal{im} (\tilde{\mu}_k )) = \textnormal{im} (\mu_k )$ proving the first claim.

To verify equation~\eqref{eq:numberintersections}, one checks that there is a one to one correspondence between points in $ \mu_j \cap \mu_k$ and integers $n \in \mathbb{Z}$ for which $\tilde{\mu}_j \cap (\tilde{\mu}_k + 2 \pi i n) \ne \emptyset$. As the real values of $\tilde{\mu}_k (t)$ and $\tilde{\mu}_j (s)$ are $t$ and $s$ respectively, an intersection point of the latter has the form $\tilde{\mu}_k (t) = \tilde{p} = \tilde{\mu}_j (t) + 2\pi i n$ implying $t = \frac{C n}{j - k} $. As $t \in [C / a_0 , -C / a_1]$  this implies 
\begin{align} \label{eq:bounds} -\frac{k - j}{a_0} \leq n \leq \frac{k - j}{a_1}.
\end{align}
The number of such integers $n$ satisfying this inequality is precisely $\lfloor (k - j) / a_0 \rfloor + \lfloor (k - j) / a_1 \rfloor + 1$. 
\end{proof}
We now label the intersection points between $\mu_j$ and $\mu_k$. For $j < k$, we write
\begin{align} \label{eq:tildezn}
\tilde{z}^n & := \tilde{\mu}_k \left( \frac{C n}{j - k} \right) = \tilde{\mu}_k \cap (\tilde{\mu}_j + 2 \pi i n) ,
\end{align} 
and write $z^n = \exp (\tilde{z}^n)$. Then applying the arguments from Proposition~\ref{prop:intnumb}, we see that 
\begin{align} \label{eq:intsects} \mu_j \cap \mu_k = \left\{ z^n : n \in \mathbb{Z}, - \frac{k - j}{a_0}  \leq n \leq \frac{k - j}{a_1}  \right\}.
\end{align}

With this labeling, we are able to detail the directed discs relative to the collection $\mathbf{\mu}_{\mathbf{a}, n}$. 
\begin{proposition} \label{prop:mpd}
	There are Hamiltonian perturbations $\mathcal{H}$ for $\mathbf{\mu}_{\mathbf{a}, n}$ for which 
	\begin{enumerate}[label=(\roman*),ref=\theproposition(\roman*)]
		\item \label{prop:mpd:1} $(\mathbf{\mu}_{\mathbf{a}, n}, \mathcal{H})$ is generic,
		\item \label{prop:mpd:2} any $u : D \to Y \cup \{-1\}$ directed relative to $(\mathbf{\mu}_{\mathbf{a}, n}, \mathcal{H})$ must be a holomorphic triangle, a constant bigon, or a constant polygon with image in $F$. 
		\item \label{prop:mpd:3} for $k_0 < k_1 < k_2$ and any $z^{n_1} \in (\mu_{k_0}, \mu_{k_1})$ and $z^{n_2} \in (\mu_{k_1}, \mu_{k_2})$ there exists a unique directed triangle $u : D \to Y \cup \{-1\}$ with 
		\begin{align} \label{eq:mult} m^u (z^{n_2}, z^{n_1}) = z^{n_1 + n_2} \end{align}
		and every non-constant holomorphic triangle satisfies such an equation.
	\end{enumerate}
 Furthermore, for a non-constant $u$ with boundary condition (iii) either
	\begin{enumerate}[label=(\alph*),ref=\theproposition(\alph*)]
		\item \label{prop:mpd:a} $n_1$ has the same sign as $n_2$ and the image of $u$ is contained in $Y$ or,
		\item \label{prop:mpd:b} $n_1$ has the opposite sign as $n_2$ and the image of $u$ contains $-1$.
	\end{enumerate}
\end{proposition}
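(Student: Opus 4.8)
The natural setting is the exponential cover $\exp : S \to P_1$ with $S = \mathbb{C} \setminus (\pi i + 2\pi i \mathbb{Z})$ used in the proof of Proposition~\ref{prop:intnumb}, where the geometry becomes affine. By that proposition and \eqref{eq:tildemudef}, each $\mu_k$ lifts to the $2\pi i\mathbb{Z}$-orbit of the straight segment $\tilde\mu_k$, which lies on the line through the origin of slope $2\pi k/C$; since $C = \ln\varepsilon < 0$, these slopes are pairwise distinct and strictly decreasing in $k$. Equation~\eqref{eq:tildezn} records the ``cocycle'' fact that will produce \eqref{eq:mult}: for $j < k$, the translate of $\tilde\mu_j$ meeting $\tilde\mu_k$ over $z^n$ is exactly $\tilde\mu_j + 2\pi i n$, so the winding indices add along a chain $k_0 < k_1 < k_2$. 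The whole statement rests on one principle: a non-constant $J$-holomorphic polygon $u : D \to P_1 \cup \{-1\} = \mathbb{C}^*$ with boundary on the perturbed $\mu_k$'s has compact image, hence lifts to a bounded holomorphic polygon $\tilde u : D \to \mathbb{C}$ with boundary on affine lines; by the standard analysis of such polygons (Schwarz reflection, or the maximum principle together with an index count) $\tilde u$ is a biholomorphism onto the straight convex polygon cut out by its boundary lines, so it is rigid and determined by its corners.

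For part~(i), the only failure of genericity on the relevant segments is that all of $\mu_0, \dots, \mu_{n-1}$ pass through the single point $1 = z^0$---but with pairwise distinct tangent directions, so this is a multiple point, not a tangency. A generic Hamiltonian perturbation $\mathcal{H} = \{H_k\}$ with $\tn H_k|_F = 0$ and support away from $F = W_{\mathbf{a}}^{-1}(*)$, taken $C^1$-small, resolves this multiple point while creating or destroying no transverse intersection; conditions \ref{defn:transverse:1}--\ref{defn:transverse:3} then follow from standard transversality, and the counts of \eqref{eq:numberintersections} are preserved. For part~(ii), a non-constant directed $u$ lifts, as above, to a holomorphic $(m+1)$-gon in $\mathbb{C}$ with affine boundary; since $f$ is increasing, hence injective, its $m+1$ boundary lines have strictly decreasing slopes, so each of the $m+1$ corners is a genuine convex corner. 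An index count---carried out with $\nu = 0$, which is legitimate since the moduli spaces do not depend on $\nu$ (see the proof of Proposition~\ref{prop:thma1d}) and all generators then have degree $0$---forces $m = 2$, so $u$ is a triangle. The remaining, constant cases listed in the statement are the trivial configurations (units and maps into $F$).

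For part~(iii), fix $k_0 < k_1 < k_2$ and inputs $z^{n_1} \in (\mu_{k_0}, \mu_{k_1})$ and $z^{n_2} \in (\mu_{k_1}, \mu_{k_2})$; adding the bounds of \eqref{eq:intsects} shows that $n_1 + n_2$ is admissible for the pair $(k_0, k_2)$, so $z^{n_1+n_2} \in (\mu_{k_0}, \mu_{k_2})$ is defined. Lift a prospective directed triangle and normalise its $\mu_{k_1}$-edge onto $\tilde\mu_{k_1}$. By the cocycle fact the corner over $z^{n_1}$ then forces the $\mu_{k_0}$-edge onto $\tilde\mu_{k_0} + 2\pi i n_1$ and the corner over $z^{n_2}$ forces the $\mu_{k_2}$-edge onto $\tilde\mu_{k_2} - 2\pi i n_2$, and the third vertex---lying on $(\tilde\mu_{k_0} + 2\pi i n_1) \cap (\tilde\mu_{k_2} - 2\pi i n_2)$---projects, by a second application of \eqref{eq:tildezn}, to $z^{n_1+n_2}$. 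Three lines of distinct slopes bound a unique affine triangle, which by the rigidity principle is the unique holomorphic triangle; since every directed triangle has its two inputs of exactly this form, this gives existence, uniqueness, \eqref{eq:mult}, and the last clause of~(iii). For the alternative (a)/(b): the lifts of $-1$ are precisely the punctures $\pi i + 2\pi i\mathbb{Z}$, all lying on the imaginary axis, while the three edges of the triangle meet that axis---when they do---at the heights $2\pi n_1$, $0$, $-2\pi n_2$, which are even multiples of $\pi$, and the three vertices have real parts of signs $-\sgn(n_1)$, $-\sgn(n_2)$, $-\sgn(n_1+n_2)$. If $n_1$ and $n_2$ have the same sign, then so does $n_1 + n_2$, all vertices lie on one side of the imaginary axis (meeting it at most at the origin), the triangle contains no puncture, and $\textnormal{im}(u) \subset P_1$---case (a). If $n_1$ and $n_2$ have opposite signs, two vertices straddle the axis, so the triangle meets it in a segment of length at least $2\pi$ whose endpoints are even multiples of $\pi i$; its interior therefore contains an odd multiple of $\pi i$, a lift of $-1$, so $\textnormal{im}(u) \ni -1$---case (b). Smallness of $\mathcal{H}$ preserves all of these transverse incidences.

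The genuine obstacle is the rigidity input behind (ii) and (iii): confirming that a Maslov-index-$2$ holomorphic polygon with affine boundary in $\mathbb{C}$ is the straight triangle, and that the small generic perturbation $\mathcal{H}$ introduces no new rigid discs. The rest is essentially bookkeeping---keeping every $2\pi i\mathbb{Z}$-translate and every sign in part~(iii) consistent with the conventions of \eqref{eq:tildemudef}--\eqref{eq:intsects}---after which the argument is elementary affine geometry in the cover.
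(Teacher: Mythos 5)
Your proposal is correct and follows essentially the same route as the paper: pass to the logarithmic cover where the $\tilde\mu_k$ are straight segments, use a grading/Maslov count (with slopes controlled by the small perturbations) to force non-constant directed discs to be triangles, get uniqueness and the additivity $n_0=n_1+n_2$ from elementary affine geometry of the translates, and decide (a)/(b) by how the lifted triangle meets the imaginary axis, whose punctures $\pi i+2\pi i\mathbb{Z}$ are the lifts of $-1$. The only blemishes are harmless: the real parts of the vertices have sign $+\sgn(n_i)$ rather than $-\sgn(n_i)$, and $z^0$ is not the only possible multiple point before perturbing, but neither affects the argument.
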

\begin{figure}
		\includegraphics[]{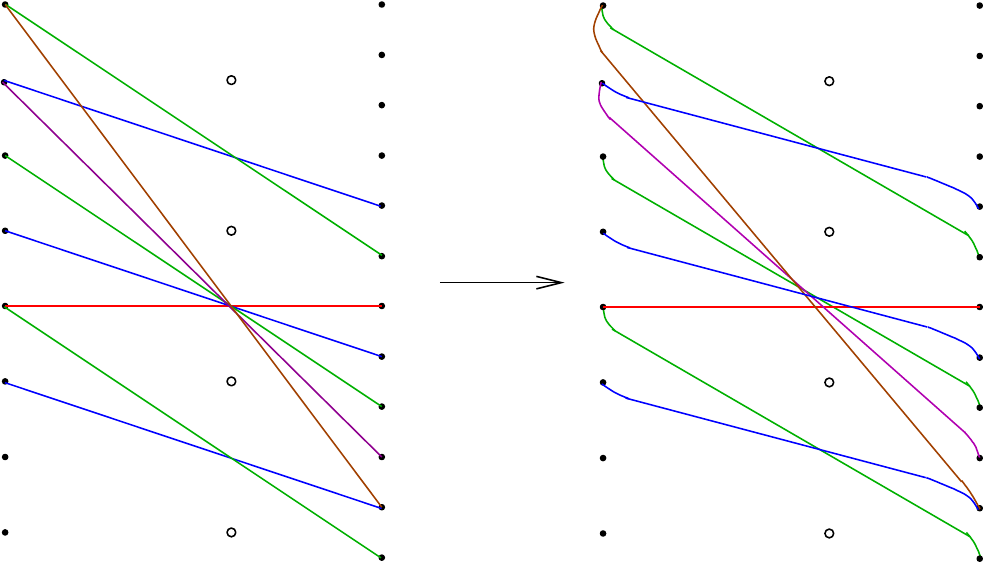}
		\caption{\label{fig:perturbations} Logarithms of the perturbations $\psi_k^t (\mu_k)$ for $(a_0, a_1) = (2, 3)$.}
\end{figure}
\begin{proof}
Define $H_k$ to be a sufficiently small Hamiltonian which rotates clockwise around $F = W^{-1}_\mathbf{a} (*)$ for small time and yields a small twist map for $\sqrt[a_0]{2 \varepsilon} < |z| < \sqrt[a_1]{(2 \varepsilon)^{-1}}$. We may choose $H_k$ so that the tangent line  of $\ln (\psi_k^{t} (\mu_k))$ at any point has slope approximately equal to $\frac{2 \pi k}{C} $, which is equal to the slope of $\tilde{\mu}_i = \ln (\mu_i)$. Such a collection of perturbed Hamiltonians is illustrated in Figure~\ref{fig:perturbations}. It is clear that, for generic choices of twists, the collection $\mathcal{H} = \{H_0, \ldots, H_{n - 1}\}$ makes $(\mathbf{\mu}_{\mathbf{a}, n}, \mathcal{H})$ generic.

Now, for any directed $u : D \to Y \cup \{-1\}$, we have that there is a lift $\tilde{u} = \ln (u): D \to \mathbb{C}$  which is unique up to a translation by $2 \pi i \mathbb{Z}$.
In order for $u$ to be a Maslov index $2$ disc, either $\tilde{u}$ is constant or it is a convex polygon with boundary edges lying on $\psi^1 (\tilde{\mu}_k)$ for $0 \leq k < n$. In the former case, by Definition~\ref{defn:transverse:3} the curves $\psi^1 (\tilde{\mu}_k)$ intersect transversely in the interior of $Y \backslash F$. In particular, no three of them intersect in a single point in $Y \backslash F$, so that any directed constant map with more than two boundary components must have image in $F$. 

For the latter case, suppose $(D , \{\zeta_i\}, f)$ is the labeled pointed disc domain of $u$ with $(m + 1)$ marked points $\{\zeta_i\}_{0 \leq i \leq m}$. Working on the cover $\mathbb{C}^*$ of $Y  \cup \{-1\}$, take $\eta^2 : T \mathbb{C}^* \otimes T \mathbb{C}^* \to \mathbb{C}$ to be the quadratic holomorphic form given by $\tn z^2$ (different choices will not affect this argument).  Each Lagrangian $\psi^1_k (L_k)$ can be graded by choosing a continuous family real arguments $\arg (\eta|_{T_z L_k} )$ for $z \in L_k$ which, by our choice of perturbations,  we may take to be arbitrarily close to $2 \tan^{-1} (2 \pi k / C)$. The absolute index (see \cite[Section~13c]{seidel}) of the intersection point $ p_k : = \tilde{u} (\zeta_k )$ for $0 < k \leq m$ is then 
\begin{align*}
i (p_k ) = \left\lfloor \frac{1}{\pi} \left( \tan^{-1} (2 \pi f(k ) / C) - \tan^{-1} (2 \pi f(k - 1) / C) \right) \right\rfloor + 1.
\end{align*}
For $k = 0$ we have $i(p_k) = \left\lfloor \frac{1}{\pi} \left( \tan^{-1} (2 \pi f(m ) / C) - \tan^{-1} (2 \pi f(0) / C) \right) \right\rfloor + 1$.
Thus, for an arbitrary holomorphic disc $\tilde{u}$ with boundary conditions in $\mathbf{\mu}_{\mathbf{a}, n}$, we have $i (p_k) \in \{0, 1 \}$ for all $p_k$. However, since  $\tilde{u}$ is directed,  $f$ is an increasing function which implies that $p_k = 0$ for all $0 \leq k \leq m$. Writing $M (\tilde{u})$ for the Maslov index of $\tilde{u}$, this then implies that
\begin{align*}
	M (\tilde{u}) = m + i(p_0) -  \sum_{k = 0}^{m - 1} i (p_k) = m 
\end{align*}
so that the only non-constant directed maps must have $m = 2$ and are holomorphic triangles.

To show \ref{prop:mpd:3}, it suffices to consider the lift $\tilde{u}$ of a holomorphic triangle $u : D \to Y \cup \{-1\}$ with domain the pointed disc $(D, \{\zeta_0, \zeta_1, \zeta_2\}, f)$ where $f(0) = k_0$, $f(1) = k_1$ and $f(2) = k_2$. Using the Hamiltonians in $\mathcal{H}$, we may assume the map $u$ has been perturbed so that the boundary is mapped to $\mu_{k_i}$ versus $\psi^1_{k_i} (\mu_{k_i})$. This may collapse certain triangles to constant maps, but otherwise it deforms a holomorphic triangle $\tilde{u}$ to a convex triangle. Assume $u (\zeta_i) = z^{n_i}$ for $i \in \{0, 1, 2\}$ and choose a lift $\tilde{u}$ so that the arc $\partial_0 D$ from $\zeta_0$ to $\zeta_1$ is mapped to $\tilde{\mu}_{k_0}$. Then the image of  $\tilde{u}$ is bounded by $\tilde{\mu}_{k_0}$, $(\tilde{\mu}_{k_1} + 2 \pi i n_1)$ and $(\tilde{\mu}_{k_2} + 2 \pi i n_0)$ where 
\begin{align*} - \frac{k_1 - k_0}{a_0}  \leq & n_1 \leq  \frac{k_1 - k_0}{a_1},  & -  \frac{k_2 - k_0}{a_0}  \leq n_0 \leq  \frac{k_2 - k_0}{a_1} .\end{align*} 
Then $(\tilde{\mu}_{k_1} + 2 \pi i n_1) \cap (\tilde{\mu}_{k_2} + 2 \pi i n_0)$ is equivalent modulo $2\pi i \mathbb{Z}$ to $\tilde{\mu}_{k_1} \cap (\tilde{\mu}_{k_2} + 2 \pi i (n_0 - n_1)) = \tilde{\mu}_{k_1} \cap (\tilde{\mu}_{k_2} + 2 \pi i n_2)$ so that $n_0 = n_1 + n_2$ which verifies the claim. Conversely, one easily shows that $n_1$ and $n_2$ satisfying the respective bounds in inequality~\eqref{eq:bounds}, $n_0 = n_1 + n_2$ satisfies this inequality as well and there is a holomorphic triangle $u$ for which equation~\eqref{eq:mult} holds.

For claims~\ref{prop:mpd:a} and \ref{prop:mpd:b} we simply observe that if $n_1$ and $n_2$ have the same sign, then so does $n_0$ and the holomorphic triangle $\tilde{u}$ lies in the positive or negative real half plane and does not contain $\pi i + 2 \pi i \mathbb{Z}$. On the other hand, if $n_1$ and $n_2$ have opposite signs, then the triangle $\tilde{u}$ intersects $i \mathbb{R}$ in an interval. But since, for every $k, n \in \mathbb{Z}$,  $(\tilde{\mu}_k + 2\pi i n)$ intersects $i \mathbb{R}$ in $2 \pi i \mathbb{Z}$ we have that this interval must intersect $\pi i + 2 \pi i \mathbb{Z}$ non-trivially so that the image of $u$ contains $-1$.
\end{proof}
Proposition~\ref{prop:mpd} is closely related to the calculation of the wrapped Fukaya category for the cylinder and the pair of pants \cite{aaeko, fss}. However, there are two minor differences which prevent one from referencing such computations directly. First, the paths $\mu_j$ are compact paths in a non-compact surface, versus the case of a Lagrangian with Legendrian boundary (or a non-compact Lagrangian with a cylindrical end \cite{abouzaid12}). Secondly, the lengths of these compact paths are asymmetric about circle of radius $- C^{-1}$, making the book-keeping of intersections slightly more delicate. 

\subsection{Circuit Lefschetz bifibrations} \label{sec:dcase}

In this section we will prepare the groundwork to perform the induction step for the proof of Theorem~\ref{thm:Amodel}. An essential ingredient in this step is the use of matching paths and Lefschetz bifibrations which factor the circuit LG model $W_\mathbf{a}$. We first define a slight variant of Lefschetz bifibrations adapted to the setting of pencils on K\"ahler manifolds. 

Let $X$ be a complete K\"ahler manifold with four effective, smooth normal crossing divisors $\mathbf{D} = \{D_{v, 0}, D_{v,\infty}, D_{h, 0}, D_{h, \infty}\}$ such that $D_{v,0}$ and $D_{h,0}$ are linearly equivalent to $D_{v,\infty}$ and $D_{h,\infty}$ respectively. We call the pencils associated to $D_{v,i}$ and $D_{h, i}$ the vertical and horizontal linear systems respectively. Write $B_v = D_{v, 0} \cap D_{v, \infty}$ and $B_h = D_{h, 0} \cap D_{h, \infty}$ for the base loci of these linear systems and $\psi_{\mathbf{D}, v} : X \backslash B_v \to \mathbb{P}^1$, $\psi_{\mathbf{D}, h} : X \backslash B_h \to \mathbb{P}^1$ for the induced maps.  For $t \in  \mathbb{P}^1$ we say the divisor $D_{h, t} = \psi_{\mathbf{D}, h}^{-1} (t)$ is an \textbf{irregular horizontal divisor} if it does not transversely intersect $D_{v, 0} \cup D_{v, \infty}$. Finally, we write $\Psi_{\mathbf{D}}$ for the product $ (\psi_{\mathbf{D}, v}, \psi_{\mathbf{D}, h}) : X \backslash (B_v \cup B_h) \to \mathbb{P}^1 \times \mathbb{P}^1$. 

\begin{definition} \label{defn:lbp} The collection $\mathbf{D}$ is said to generate a Lefschetz bipencil if
	\begin{enumerate}[label=(\roman*),ref=\thedefinition(\roman*)]
		\item \label{defn:lbp:1} $D_{h, 0}$ and $D_{h,\infty}$ are components of $D_{v, 0}$,
		\item \label{defn:lbp:2} $\psi_{\mathbf{D}, v} : X \backslash \left( D_{v, 0} \cup D_{v, \infty} \right) \to \mathbb{C}^*$ is a Lefschetz pencil,
		\item \label{defn:lbp:3} there are finitely many irregular horizontal divisors $D_{h, t_1}, \ldots, D_{h, t_k}$,
		\item \label{defn:lbp:4} the closure of the critical locus $\Delta_{\mathbf{D}} := \overline{\cp{\Psi_{\mathbf{D}}}}$ does not intersect $B_h$ and satisfies 
		\begin{align*} 
		\Delta_{\mathbf{D}} \cap D_{h, t_i} \subset D_{v, 0} \cup D_{v, \infty}
		\end{align*}
		for all $1 \leq i \leq k$.
	\end{enumerate} 
\end{definition}

If $\mathbf{D}$ generates a Lefschetz bipencil, then it can be used to obtain an exact symplectic Lefschetz fibration as defined in \cite[Section~15c]{seidel}. Consider the spaces
\begin{align*} E_{\mathbf{D}} & = X \backslash  \left( D_{v, 0} \cup D_{v, \infty} \cup \left(\cup_{i = 1}^k D_{h, t_i} \right) \right), \\ Y & = \mathbb{C}^* \times \left( \mathbb{P}^1 \backslash \{t_1, \ldots, t_k\} \right).
\end{align*} 
The exact symplectic structure on $E_{\mathbf{D}}$ can be defined by taking the K\"ahler potential $\left(\log (\prod s_{D_{h, t_i}})\right)^2$. Writing $\pi_1$ for the projection to the first coordinate, we then obtain the sequence of maps
\begin{equation}\label{eq:slfib}
\begin{tikzpicture}[baseline=(current  bounding  box.center), scale=1.5]
\node (A) at (0,0) {$E_{\mathbf{D}}$};
\node (B) at (1,0) {$Y$};
\node (C) at (2,0) {$\mathbb{C}^*$};
\path[->,font=\scriptsize]
(A) edge node[below]{$\Psi_{\mathbf{D}}$} (B)
(B) edge node[below]{$\pi_1$} (C)
(A) edge[bend left] node[above]{$\psi_{\mathbf{D}, v}$} (C);
\end{tikzpicture} 
\end{equation}
Because $E_{\mathbf{D}} \subsetneq E = X \backslash (D_{v, 0} \cup D_{v, \infty}) \subset X$, the Fukaya-Seidel category of $\psi_{\mathbf{D}, v} : E_{\mathbf{D}} \to \mathbb{C}^*$ differs from the Lefschetz pencil $\psi_{\mathbf{D}, v} : E  \to \mathbb{C}^*$ and we will need to reincorporate the excised divisors $D_{h, t_i}$ into the picture in order to relate these  categories. We will do this in the next section, but first we describe our main example of a Lefschetz bipencil.

%\begin{example}
Let $d > 1$ and $\mathbf{a} = (a_0, \ldots, a_{d + 1}) \in \mathbb{Z}^{d + 2}$ satisfy equations~\eqref{eq:balance}. Assume $\mathbf{a}$ has signature $(p,q)$ with $p \geq 1$ and take $X = \bar{P}_d \subset \mathbb{P}^{d + 1}$. We consider the collection of effective divisors $\mathbf{D} (\mathbf{a}) = \{D_{v, 0}, D_{v, \infty}, D_{h, 0}, D_{h, \infty}\}$ where the vertical divisors $D_{v, i} = D_i$ are given from equation~\eqref{eq:sections} and 
\begin{align*}
D_{h,0} & = C_0 |_{\bar{P}_d} , & D_{h, \infty} &= C_1 |_{\bar{P}_d}. 
\end{align*}
With this data, we have that $E = P_d$ and $\psi_{\mathbf{D} (\mathbf{a}), v} = W_{\mathbf{a}}$.
%\end{example}
\begin{proposition} \label{prop:circuitbiLefschetz}
 The collection $\mathbf{D} (\mathbf{a} )$ on $\bar{P}_d$ generates a Lefschetz bipencil with irregular horizontal divisors $\{D_{h, 0}, D_{h, -1}, D_{h, \infty}\}$. Furthermore, the critical points and values of $\Psi_{\mathbf{D} (\mathbf{a} )}$ are 
 \begin{align}
 \label{eq:cpPsi}	\cp{\Psi_{\mathbf{D} (\mathbf{a} )}} & = \{ [a_0 + z: a_1 - z : a_2 : \cdots :a_{d + 1} ] : z \in \mathbb{C} \backslash \{-a_0, a_1\} \}, \\
 \label{eq:cvPsi}	\cv{\Psi_{\mathbf{D} (\mathbf{a} )}} & = \left\{ \left( \prod_{j = 2}^{d + 1} a_j^{a_j} (a_0 + z)^{a_0}(a_1 - z)^{a_1},  \frac{a_0 + z}{a_1 - z}  \right) :  z \in \mathbb{C} \backslash \{-a_0, a_1\} \right\}.
 \end{align}
\end{proposition}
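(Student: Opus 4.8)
The plan is to verify the four conditions of Definition~\ref{defn:lbp} in turn and then read off the critical locus of $\Psi_{\mathbf{D}(\mathbf{a})}$ by a direct computation in the affine chart. First I would note that condition~\ref{defn:lbp:1} is essentially built into the definition of $\mathbf{D}(\mathbf{a})$: since $a_0 > 0$ we have $C_0|_{\bar{P}_d}$ appearing in $D_0 = D_{v,0}$ with positive coefficient $a_0$, and likewise $C_1|_{\bar P_d}$ (using $p \ge 1$, so $a_1 > 0$), so $D_{h,0}$ and $D_{h,\infty}$ are indeed components of $D_{v,0}$. Condition~\ref{defn:lbp:2} is exactly the content of Lemma~\ref{lem:circuit_basics}: $E = P_d$ and $\psi_{\mathbf{D}(\mathbf{a}),v} = W_{\mathbf{a}}$ is an (atomic) Lefschetz pencil over $\mathbb{C}^*$.

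For the heart of the argument I would pass to the affine coordinates $(z_1,\dots,z_d) \in (\mathbb{C}^*)^d$ with $-1 \ne \sum z_i$, via the chart $[(-1-\sum z_i): z_1: \cdots : z_d:1]$, exactly as in equation~\eqref{eq:etalc}. In these coordinates $W_{\mathbf{a}}$ and $\psi_{\mathbf{D}(\mathbf{a}),h} = Z_0/Z_1 = (-1-\sum z_i)/z_1$ become explicit monomial-type expressions, and one computes $\tn W_{\mathbf{a}} \wedge \tn \psi_{\mathbf{D}(\mathbf{a}),h}$ directly, much as in the proof of Lemma~\ref{lem:circuit_basics}. The vanishing of this two-form forces $Z_j/a_j$ to be a common value for $2 \le j \le d+1$ (the ``interior'' coordinates) while leaving $Z_0, Z_1$ free subject only to the hyperplane constraint $\sum Z_i = 0$; solving gives $Z_j = a_j$ for $j \ge 2$ and $Z_0 = a_0 + z$, $Z_1 = a_1 - z$ for a free parameter $z$, with the excluded values $z = -a_0$ and $z = a_1$ corresponding to points landing on $B_h = C_{\{0,1\}}$. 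This yields~\eqref{eq:cpPsi}, and substituting back into $(\psi_{\mathbf{D}(\mathbf{a}),v}, \psi_{\mathbf{D}(\mathbf{a}),h})$ — using $q_{\mathbf{a}}$-type bookkeeping as in Lemma~\ref{lem:circuit_basics} — gives~\eqref{eq:cvPsi}. From~\eqref{eq:cpPsi} one sees that $\Delta_{\mathbf{D}(\mathbf{a})}$ is the closure of a rational curve, it is disjoint from $B_h$ (the excluded $z$-values are exactly the ones approaching $B_h$), and the irregular horizontal divisors are $\psi_{\mathbf{D}(\mathbf{a}),h}^{-1}$ of those $t$ where $D_{h,t}$ fails to meet $D_{v,0}\cup D_{v,\infty}$ transversally — the candidates being $t = 0$ ($Z_0 = 0$), $t = \infty$ ($Z_1 = 0$), and $t = -1$ (where $D_{h,-1} = \{Z_0 + Z_1 = 0\}$ meets the remaining coordinate hyperplanes non-generically). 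Checking there are no others, and that $\Delta_{\mathbf{D}(\mathbf{a})} \cap D_{h,t_i} \subset D_{v,0}\cup D_{v,\infty}$ at each of these three values, establishes conditions~\ref{defn:lbp:3} and~\ref{defn:lbp:4}.

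I expect the main obstacle to be the careful case analysis for conditions~\ref{defn:lbp:3} and~\ref{defn:lbp:4}: one must confirm that transversality of $D_{h,t}$ with the vertical divisors can fail only at $t \in \{0,-1,\infty\}$, and that along the closure $\Delta_{\mathbf{D}(\mathbf{a})}$ the points sitting over these three values genuinely lie in $D_{v,0} \cup D_{v,\infty}$ rather than in the open part of the fibration. This is a geometric transversality check on coordinate hyperplane arrangements inside $\bar P_d \cong \mathbb{P}^d$, and while each individual verification is routine, organizing them — and handling the behavior of $\Delta_{\mathbf{D}(\mathbf{a})}$ at the boundary points $z \to -a_0, a_1, \infty$ where the parametrization degenerates — is the part requiring genuine care rather than mechanical computation.
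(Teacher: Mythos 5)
Your overall strategy coincides with the paper's: conditions \ref{defn:lbp:1} and \ref{defn:lbp:2} are read off from the definition of $\mathbf{D}(\mathbf{a})$ and Lemma~\ref{lem:circuit_basics}; the critical locus is obtained by the wedge computation (the paper wedges $\tn g \wedge \tn W_{\mathbf{a}} \wedge \tn (\sum Z_i)$ in homogeneous coordinates, you work in the affine chart --- these are equivalent), yielding the parametrization in \eqref{eq:cpPsi} and, by substitution, \eqref{eq:cvPsi}; and conditions \ref{defn:lbp:3}--\ref{defn:lbp:4} are settled by showing $t = 0, -1, \infty$ are the only irregular values (generic $D_{h,t} \cong \mathbb{P}^d$ meets the coordinate hyperplanes transversely, $D_{h,0}, D_{h,\infty} \subset D_{v,0}$, and $D_{h,-1}$ meets $D_{v,\infty}$ non-transversely at $[1:-1:0:\cdots:0]$) and by checking that a point of the critical curve on $D_{h,-1}$ would force $a_0 + a_1 = 0$ or $z = \infty$, hence lies in $D_{v,0} \cup D_{v,\infty}$. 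The checks you defer as ``routine'' are exactly the ones the paper carries out, so the outline is sound.

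One justification you give is incorrect, although the fact it is meant to support is true and immediate: you assert that the excluded values $z = -a_0, a_1$ ``correspond to points landing on $B_h$'' and offer this as the reason $\Delta_{\mathbf{D}(\mathbf{a})}$ is disjoint from $B_h$. The closure of the critical curve meets the boundary at $[0 : a_0 + a_1 : a_2 : \cdots : a_{d+1}]$, at $[a_0 + a_1 : 0 : a_2 : \cdots : a_{d+1}]$, and (as $z \to \infty$) at $[1 : -1 : 0 : \cdots : 0]$; in each case at most one of $Z_0, Z_1$ vanishes, so these points lie on $D_{h,0}$, on $D_{h,\infty}$, or on $\cap_{j \geq 2} C_j|_{\bar{P}_d}$, but never on $B_h = \{Z_0 = Z_1 = 0\}$. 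Taken literally, your parenthetical would say the closure does meet $B_h$, which would violate the very clause of condition \ref{defn:lbp:4} you are verifying. The correct one-line argument is precisely that no limit point of the parametrized curve has both $Z_0$ and $Z_1$ equal to zero. With that repair, your proposal matches the paper's proof.
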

\begin{proof}
	One verifies Definition~\ref{defn:lbp:1} immediately from the definition of the divisors in $\mathbf{D} (\mathbf{a})$ while \ref{defn:lbp:2} was observed in Lemma~\ref{lem:circuit_basics}. For property~\ref{defn:lbp:3} we take coordinates $[Z_0: \cdots : Z_{d + 1}] \in \bar{P}_d  \subset \mathbb{P}^{d + 1}$ and observe that for $t \ne 0, -1, \infty$, we have that $D_{h, t} = \{[tX : X: Z_2: \cdots :Z_{d + 1} ] : (t + 1) X+ Z_2 + \cdots + Z_{d + 1} = 0 \} \cong \mathbb{P}^{d}$. A check shows that the intersection $D_{h, t}$ with the support of $D_{v, 0} \cup D_{v, \infty}$ then gives transverse coordinate hyperplanes. On the other hand at $t = 0, \infty$, $D_{h, t} \subset D_{v, 0}$ while at $t = -1$, $D_{h, -1}$ has a non-transverse intersection with $D_{v,\infty}$ at the point $[1:-1: 0: \cdots :0]$. 

	For the last property~\ref{defn:lbp:4}, we first compute the critical points of $\Psi_{\mathbf{D} (\mathbf{a})}$ in $E = P_d$. Letting $[Z_0: \cdots : Z_{d + 1}] \in E  \subset \mathbb{P}^{d + 1}$, we have
		\begin{align} \label{eq:circbil} \Psi_{\mathbf{D} (\mathbf{a} )} ([Z_0: \cdots :Z_{d + 1}]) & = \left(W_{\mathbf{a}} ([Z_0: \cdots :Z_{d + 1}]) , \frac{Z_0}{Z_1} \right). 
		\end{align}
 Let $g = \frac{Z_0}{Z_1}$ and observe  
	\[\tn g = \frac{Z_0}{Z_1} \left( \frac{\tn Z_0}{Z_0} - \frac{\tn Z_1}{Z_1}  \right) \] which is non-zero for all $[Z_0: \cdots : Z_{d + 1}] \in E$.  Next, one checks that $[Z_0: \cdots : Z_{d + 1}] \in \cp{\Psi_{\mathbf{D} (\mathbf{a})}}$ if and only if 
	
	\begin{align*} 0 & =  \tn g \wedge \tn W_{\mathbf{a}} \wedge \tn \left( \sum_{i = 0}^{d + 1} Z_i \right), \\ & = 
	\tn g \wedge \left( \sum_{1 < i < j} \left(\frac{a_i}{Z_i} - \frac{a_j}{Z_j} \right) \tn Z_i \wedge \tn Z_j \right) + , \\
	&  \hspace{.5in} + \sum_{1 < j} \left[	 \frac{1}{Z_0} \left( \frac{a_1}{Z_1} - \frac{a_j}{Z_j}  \right) + \frac{1}{Z_1} \left( \frac{a_0}{Z_0} - \frac{a_j}{Z_j} \right) \right] \tn Z_0 \wedge \tn Z_1 \wedge \tn Z_j.
	\end{align*} 
	Solving these equations shows that there exists $ t \ne 0$, $z \in \mathbb{C} \backslash \{-a_0, a_1\}$ for which $Z_i = a_i t$ for $1 < i \leq d + 1$, $Z_0 = t (a_0 + z)$ and $Z_1 =  t (a_1 - z)$. This verifies equations~\eqref{eq:cpPsi} and \eqref{eq:cvPsi}. The only irregular divisor not contained in $D_{v, 0}$ is $D_{h, -1}$ and we see that $[a_0 + z: a_1 - z:a_2: \cdots :a_{d + 1}] \in D_{h, -1} \cap \Delta_{\mathbf{D} (\mathbf{a})}$ implies either $a_0 + z = - a_1 + z$ and $a_0 = -a_1$, or $z = \infty$. Since $a_0 , a_1 > 0$, the first case does not occur, implying $D_{h, -1} \cap \cv{\Psi_{\mathbf{D} (\mathbf{a})}} = \emptyset$ or $D_{h, -1} \cap \Delta_{\mathbf{D} (\mathbf{a})} \subset D_{v, 0} \cup D_{v, \infty}$.
\end{proof}

By Proposition~\ref{prop:circuitbiLefschetz} we have that ${\mathbf{D} (\mathbf{a})}$ generates a Lefschetz bipencil and that the induced diagram 
\begin{equation*}
\begin{tikzpicture}[baseline=(current  bounding  box.center), scale=2.3]
\node (A) at (0,0) {$E_{\mathbf{D}(\mathbf{a})}$};
\node (B) at (1,0) {$\mathbb{C}^* \times P_1$};
\node (C) at (2,0) {$\mathbb{C}^*$};
\path[->,font=\scriptsize]
(A) edge node[below]{$\Psi_{\mathbf{D}(\mathbf{a})}$} (B)
(B) edge node[below]{$\pi_1$} (C)
(A) edge[bend left=25] node[above]{$W^\circ_{\mathbf{a}}$} (C);
\end{tikzpicture}
\end{equation*} 
is a symplectic Lefschetz bifibration where $E_{\mathbf{D}(\mathbf{a})} \cong P_d \backslash D_{h, -1}$ and we define $W^\circ_{\mathbf{a}}$ as the restriction of $W_{\mathbf{a}}$ to $E_{\mathbf{D} (\mathbf{a})}$.  In this diagram, we have implicitly identified $\mathbb{P}^1 \backslash \{[0:1],[1:0],[-1:1]\}$ with $P_1$. As we will utilize this identification shortly, we make it explicit by taking
\begin{align*}
	\Psi_{\mathbf{D} (\mathbf{a})} ([Z_0: \cdots : Z_{d + 1}]) = \left( W_{\mathbf{a}} ([Z_0: \cdots : Z_{d + 1}]), [  Z_0: Z_1: -Z_0 - Z_1] \right).
\end{align*} 
Fixing more notation, we write $E_{\mathbf{a}}$ for $P_d$ and, noting that $D_{v, 0} = \sum_{i = 0}^p a_i C_i|_{\bar{P}_d}$, for any $0 \leq i \leq p$ we write $\bar{E}_{\mathbf{a}, i}$ for the partial compactification $E_\mathbf{a} \cup C_i$. We then extend $W_{\mathbf{a}}$ to a map $\bar{W}_{\mathbf{a}} : \bar{E}_{\mathbf{a}, i} \to \mathbb{C}$ by sending $C_i$ to zero. These partial compactifications then fit into the following commutative diagram.
\begin{equation}
\label{eq:circuitbifib2}
\begin{tikzpicture}[baseline=(current  bounding  box.center), cross line/.style={preaction={draw=white, -, line width=6pt}}, scale=2.3]
\node (A) at (0,2) {$E_{\mathbf{D}(\mathbf{a})}$};
\node (B) at (1,2) {$\mathbb{C}^* \times P_1$};
\node (C) at (2,2) {$\mathbb{C}^*$};
\node (A2) at (0,1) {$E_{\mathbf{a}}$};
\node (B2) at (1,1) {$\mathbb{C}^* \times \mathbb{C}^*$};
\node (C2) at (2,1) {$\mathbb{C}^*$};
\node (A3) at (0,0) {$\bar{E}_{\mathbf{a}, 0}$};
\node (B3) at (1,0) {$\mathbb{C} \times \mathbb{C}$};
\node (C3) at (2,0) {$\mathbb{C}$};
\path[->,font=\scriptsize]
(A) edge node[below]{$\Psi_{\mathbf{D}(\mathbf{a})}$} (B)
(B) edge node[below]{$\pi_1$} (C)
(A) edge[bend left=25] node[above]{${W}^\circ_{\mathbf{a}}$} (C)
(A2) edge node[below]{$\Psi_{\mathbf{D}(\mathbf{a})}$} (B2)
(B2) edge node[below]{$\pi_1$} (C2)
(A2) edge[bend left=25] node[left=28pt]{$W_{\mathbf{a}}$} (C2)
(A3) edge node[below]{$\Psi_{\mathbf{D}(\mathbf{a})}$} (B3)
(B3) edge node[below]{$\pi_1$} (C3)
(A3) edge[bend left=25] node[left=28pt]{$\bar{W}_{\mathbf{a}}$} (C3);
\path[right hook->]
(A) edge node{} (A2)
(A2) edge node{} (A3)
(C) edge node{} (C2)
(C2) edge node{} (C3)
(B) edge [cross line] node{} (B2)
(B2) edge [cross line] node{} (B3);
\end{tikzpicture}
\end{equation} 
We will denote the fibers of $W^\circ_{\mathbf{a}}$, $W_{\mathbf{a}}$ and $\bar{W}_{\mathbf{a}}$ over $t$ as $F^\circ_t$, $F_t$ and $\bar{F}_t$ respectively.

Our next task in the induction argument is to understand the Lefschetz bifibration $\Psi_{\mathbf{D} (\mathbf{a})}$ purely in terms of lower dimensional circuit potentials.  Given balanced $\mathbf{a}, \nu_{\mathbf{a}} \in \mathbb{Z}^{d + 2}$,  define lower rank lattice elements 
\begin{align} \label{eq:abc} 
\begin{split}
\mathbf{b} : = (b_0, \ldots, b_d) & = (a_0 + a_1, a_2, \ldots, a_{d + 1}) \in \mathbb{Z}^{d + 1}, \\  \left( \nu_{\mathbf{b}}(0), \ldots, \nu_{\mathbf{b}} (d) \right) & = \left( \nu_{\mathbf{a}} (0) + \nu_{\mathbf{a}} (1), \nu_{\mathbf{a}} (2), \ldots, \nu_{\mathbf{a}} (d + 1) \right), \\ 
\mathbf{c} := (c_0, c_1, c_2) &  = (a_0, a_1, -a_0 - a_1 ) \in \mathbb{Z}^3, \\ \left( \nu_{\mathbf{c}} (0), \nu_{\mathbf{c}} (1), \nu_{\mathbf{c}} (2) \right) & =  \left( \nu_{\mathbf{a}} (0), \nu_{\mathbf{a}} (1), - \nu_{\mathbf{a}} (0) -  \nu_{\mathbf{a}} (1) \right) .
\end{split}
\end{align} 
The elements $\mathbf{b}$ and $\mathbf{c}$ clearly satisfy the  equations~\eqref{eq:balance} and are of signature $(p - 1, q)$ and $(1, 0)$ respectively. If $\nu_{\mathbf{a}}$ is balanced then one easily checks that $\nu_{\mathbf{b}}$ and $\nu_{\mathbf{c}}$ are balanced. Also take $\rho : E_{\mathbf{D} (\mathbf{a})} \to P_{d - 1}$ to be the function  
\begin{align*} \rho ([Z_0: \cdots :Z_{d + 1}]) = [Z_0 + Z_1: Z_2: \cdots Z_{d + 1}].
\end{align*} 
Writing $\Psi_{\mathbf{D} (\mathbf{a})}^t $ for the restriction of $\pi_2 \circ \Psi_{\mathbf{D} (\mathbf{a})}$ to $F_t$, we state the following proposition.

\begin{proposition} \label{prop:fibprod} There is an isomorphism 
	\begin{align*} h : E_{\mathbf{D} (\mathbf{a})} \to P_1 \times P_{d - 1}\end{align*} for which $h^* ( \eta_{\nu_{\mathbf{c}}} \wedge \eta_{\nu_{\mathbf{b}}}) = (-1)^{\nu_{\mathbf{a}}(2) - 1} \eta_{\nu_{\mathbf{a}}}$. Furthermore, for any $t \in \mathbb{C}^*$ fibers $F^\circ_t$ and $F_t$ fit into the pullback diagrams
\begin{equation}
\label{eq:circuitbifib}
\begin{tikzpicture}[baseline=(current  bounding  box.center), scale=2.3]
\node (A) at (0,1) {$F^\circ_t$};
\node (B) at (1,1) {$P_{d - 1}$};
\node (C) at (0,0) {$P_1$};
\node (D) at (1,0) {$\mathbb{C}^*,$};
\node (A2) at (2.5,1) {$F_t$};
\node (B2) at (3.5,1) {$P_{d - 1} \cup C_0$};
\node (C2) at (2.5,0) {$\mathbb{C}^*$};
\node (D2) at (3.5,0) {$\mathbb{C}.$};
\path[->,font=\scriptsize]
(A2) edge node[above]{$\rho$} (B2)
(B2) edge node[right]{$\bar{W}_{\mathbf{b}}$} (D2)
(A2) edge node[left]{$\Psi^t_{\mathbf{D} (\mathbf{a})}$} (C2)
(C2) edge node[below]{$(-1)^{c_2} t / W_{\mathbf{c}}$} (D2)
(A) edge node[above]{$\rho$} (B)
(B) edge node[right]{$W_{\mathbf{b}}$} (D)
(A) edge node[left]{$\Psi^t_{\mathbf{D} (\mathbf{a})}$} (C)
(C) edge node[below]{$(-1)^{c_2} t / W_{\mathbf{c}}$} (D);
\end{tikzpicture}
\end{equation}
\end{proposition}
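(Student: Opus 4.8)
The plan is to produce the isomorphism $h$ by hand and then read off both the volume form identity and the two cartesian squares from a single monomial identity for $W_{\mathbf a}$.

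First I would take $h=(\pi_2\circ\Psi_{\mathbf D(\mathbf a)},\rho)$, i.e.
\[ h([Z_0:\cdots:Z_{d+1}])=\big([Z_0:Z_1:-Z_0-Z_1],\ [Z_0+Z_1:Z_2:\cdots:Z_{d+1}]\big). \]
On $E_{\mathbf D(\mathbf a)}=P_d\setminus D_{h,-1}$ one has $Z_0+Z_1\neq0$, which is exactly the condition making the first factor land in $P_1$ and the second in $P_{d-1}$. Conversely, given $x=[X_0:X_1:X_2]\in P_1$ and $y=[Y_0:Y_1:\cdots:Y_d]\in P_{d-1}$, the point $[-Y_0X_0:-Y_0X_1:X_2Y_1:\cdots:X_2Y_d]$ lies in $E_{\mathbf D(\mathbf a)}$ and defines an inverse to $h$; hence $h$ is an isomorphism of varieties.

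Next I would establish the monomial identity. Writing $W_{\mathbf a}$ as the degree-zero rational function $\prod_{i=0}^{d+1}Z_i^{a_i}$ on $\bar{P}_d$ (this agrees with the pencil $[s_0:s_\infty]$ by the balancing condition, cf.\ \eqref{eq:explicit}), substituting $h^{-1}$ and using $\sum_{i=2}^{d+1}a_i=-a_0-a_1$, $b_0=a_0+a_1$, $b_j=a_{j+1}$ for $j\geq1$, and $\mathbf c=(a_0,a_1,-a_0-a_1)$ gives $W_{\mathbf a}\circ h^{-1}(x,y)=(-1)^{c_2}W_{\mathbf c}(x)W_{\mathbf b}(y)$. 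Intersecting with $\{W_{\mathbf a}=t\}$ and solving for $W_{\mathbf b}(y)$ then identifies $F^\circ_t$ with $\{(x,y):W_{\mathbf b}(y)=(-1)^{c_2}t/W_{\mathbf c}(x)\}=P_1\times_{\mathbb C^*}P_{d-1}$, whose two projections are $\Psi^t_{\mathbf D(\mathbf a)}=\pi_1\circ h$ and $\rho=\pi_2\circ h$; this is the first square of \eqref{eq:circuitbifib}. For the volume form I would pass to the chart $[(-1-\sum z_i):z_1:\cdots:z_d:1]$ of \eqref{eq:etalc}, under which $h$ becomes $(z_1,\dots,z_d)\mapsto(w;y_1,\dots,y_{d-1})$ with $w=z_1/\sigma$, $\sigma:=1+z_2+\cdots+z_d$, and $y_j=z_{j+1}$. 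Reading $\eta_{\nu_{\mathbf b}}$ and $\eta_{\nu_{\mathbf c}}$ off \eqref{eq:etalc} in the corresponding charts, the terms of $dw$ involving $dz_2,\dots,dz_d$ vanish against $dz_2\wedge\cdots\wedge dz_d$, the powers of $\sigma$ cancel exactly (using $\nu_{\mathbf b}(0)=\nu_{\mathbf a}(0)+\nu_{\mathbf a}(1)$), and collecting the remaining monomials together with the signs coming from the exponents $\nu_{\mathbf a}(0),\nu_{\mathbf a}(1)$ yields $(-1)^{\nu_{\mathbf a}(2)-1}\eta_{\nu_{\mathbf a}}$. This is a routine but sign-sensitive computation.

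Finally, the square for $F_t$ is obtained by extending the first square across $D_{h,-1}$. Along $D_{h,-1}$ one has $Z_0+Z_1=0$, so $\pi_2\circ\Psi_{\mathbf D(\mathbf a)}$ is constantly the point $[1:-1:0]$ and the $P_1$-factor is replaced by the partial compactification $P_1\cup C_2\cong\mathbb C^*$, while $\rho$ takes values in the boundary divisor $C_0\subset\bar{P}_{d-1}$, so the $P_{d-1}$-factor is replaced by $P_{d-1}\cup C_0$; the identity $W_{\mathbf a}=(-1)^{c_2}W_{\mathbf c}W_{\mathbf b}$ persists over these partial compactifications, with $1/W_{\mathbf c}$ vanishing to order $a_0+a_1$ along $C_2$ matching the vanishing order of $\bar W_{\mathbf b}$ along $C_0$. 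I expect this last step to be the main obstacle: one must check that $\pi_2\circ\Psi_{\mathbf D(\mathbf a)}$ and $\rho$ extend holomorphically across $D_{h,-1}$ into precisely these partial compactifications and that $F_t$, together with its boundary divisor $F_t\cap D_{h,-1}$, is recovered as the resulting cartesian square — this is where the clean birational picture degenerates and the order-$(a_0+a_1)$ ramification along $D_{h,-1}$ has to be tracked carefully.
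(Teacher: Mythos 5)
Your construction is the same as the paper's: you take $h=(\pi_2\circ\Psi_{\mathbf D(\mathbf a)},\rho)$ with the same explicit inverse $g([U_0:U_1:U_2],[V_0:\cdots:V_d])=[-V_0U_0:-V_0U_1:U_2V_1:\cdots:U_2V_d]$, verify the monomial identity $W_{\mathbf a}\circ g=(-1)^{c_2}W_{\mathbf b}\,W_{\mathbf c}$ to obtain the left-hand square, and check the volume-form identity in the chart of \eqref{eq:etalc}, exactly as the paper does. The one step you flag as the main obstacle, the extension to $F_t$, is the step the paper treats as immediate: $\rho$ carries $F_t\cap D_{h,-1}$ into $C_0$, $\pi_2\circ\Psi_{\mathbf D(\mathbf a)}$ sends it to the added point $-1$, and $(-1)^{c_2}t/W_{\mathbf c}$ extends there with a zero of order $a_0+a_1=b_0$ matching the vanishing of $\bar W_{\mathbf b}$ along $C_0$ --- precisely the order-matching you already record, so no further tracking of the ramification is carried out in the paper beyond this observation.
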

\begin{proof}
Define the maps $g: P_1 \times P_{d - 1} \to E_{\mathbf{D} (\mathbf{a} )}$ and $h : E_{\mathbf{D} (\mathbf{a} )} \to P_1 \times P_{d - 1}$ via
\begin{align*}
g([U_0: U_1 : U_2], [V_0: \cdots : V_{d}])  & = [ - V_0 U_0: -V_0 U_1: U_2 V_1: \cdots :U_2 V_{d}], \\
h (Z) & = ( \pi_2 ( \Psi_{\mathbf{D} (\mathbf{a})}(Z)), \rho (Z) ) ,
\end{align*}
where $Z = [Z_0: \cdots :Z_{d + 1}] \in E_{\mathbf{D} (\mathbf{a})} \subset P_d$. One easily computes that $g$ and $h$ yield a pair of inverse isomorphisms between $P_1 \times P_{d - 1}$ and $E_{\mathbf{D}(\mathbf{a})}$. To see that $h^* (\eta_{\nu_{\mathbf{c}}} \wedge \eta_{\nu_{\mathbf{b}}}) = \eta_{\nu_{\mathbf{a}}}$ one utilizes the coordinate representation of $\eta_\nu$ given in equation~\eqref{eq:etalc}.

 To see that $h$ induces the fiber squares in \eqref{eq:circuitbifib}, we have that $W_{\mathbf{a}} (g(U, V)) = t$ if and only if 
\begin{align*} t & =  (-1)^{a_0 + a_1}U_0^{a_0} U_1^{a_1} U_2^{\sum_{i = 2}^{d + 1} a_i} V_0^{a_0 + a_1} \prod_{i = 1}^{d} V_i^{a_{i + 1}},  \\
& = (-1)^{c_2}  W_{\mathbf{b}} (V)  W_{\mathbf{c}}(U).
\end{align*}
Thus, $g$ and $h$ restrict to inverse isomorphisms between $ P_1 \times_{\mathbb{C}^*} P_{d - 1}$ and  $F^\circ_t$ yielding the left hand diagram. The extension to $F_t$ is immediate from the fact that $D_{h, 0}$ maps to $C_0$ via $\rho$.
\end{proof}

A basic corollary of Proposition~\ref{prop:fibprod} gives a description of the critical values of $\Phi^t_{\mathbf{D} (\mathbf{a})}$ in terms of the one-dimensional circuit potential studied in Section~\ref{sec:1dcase}.

\begin{corollary} \label{cor:critvt}
For any $t \in \mathbb{C}^*$, the set of critical values $\cv{\Psi_{\mathbf{D} (\mathbf{a})}^t}$ equals the fiber $W_{\mathbf{c}}^{-1} ( (-1)^{c_2}  t / q_{\mathbf{b}})$.
\end{corollary}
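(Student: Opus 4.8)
The plan is to read off the critical locus of $\Psi_{\mathbf{D}(\mathbf{a})}^t$ directly from the fiber-product description furnished by Proposition~\ref{prop:fibprod}. Under the isomorphism $h$ the fiber $F^\circ_t$ is identified with $P_1 \times_{\mathbb{C}^*} P_{d-1}$, with structure maps $g := (-1)^{c_2} t / W_{\mathbf{c}} : P_1 \to \mathbb{C}^*$ and $W_{\mathbf{b}} : P_{d-1} \to \mathbb{C}^*$, and $\Psi_{\mathbf{D}(\mathbf{a})}^t$ becomes the projection $p_1$ onto the $P_1$ factor. So the statement reduces to locating the points of $P_1$ over which $p_1$ fails to be a submersion and then rewriting the resulting condition in terms of $W_{\mathbf{c}}$.

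The key input is Lemma~\ref{lem:circuit_basics} applied to $\mathbf{b}$ and $\mathbf{c}$: $W_{\mathbf{b}}$ is a submersion everywhere except at its unique critical point $p_{\mathbf{b}}$, with critical value $q_{\mathbf{b}}$, and $W_{\mathbf{c}}$ has unique critical point $p_{\mathbf{c}}$. First I would check that $p_1$ is a submersion at every $(u,v) \in F^\circ_t$ with $v \ne p_{\mathbf{b}}$: since $(dW_{\mathbf{b}})_v$ is onto $\mathbb{C}$, any tangent vector at $u$ lifts into the fiber product. Next, at a point $(u, p_{\mathbf{b}}) \in F^\circ_t$ --- which forces $g(u) = q_{\mathbf{b}}$ --- the vanishing of $(dW_{\mathbf{b}})_{p_{\mathbf{b}}}$ makes the tangent space of the fiber product equal to $\ker(dg_u) \times T_{p_{\mathbf{b}}}P_{d-1}$ (the fiber product being smooth there once $dg_u \ne 0$), so $dp_1$ surjects only onto $\ker(dg_u)$; as $dg_u \ne 0$ precisely when $u \ne p_{\mathbf{c}}$, each such point is a critical point of $\Psi_{\mathbf{D}(\mathbf{a})}^t$ with critical value $u$. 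Assembling the two observations, $\cv{\Psi_{\mathbf{D}(\mathbf{a})}^t}$ is exactly $\{u \in P_1 : g(u) = q_{\mathbf{b}}\}$, and since $g(u) = q_{\mathbf{b}}$ is equivalent to $W_{\mathbf{c}}(u) = (-1)^{c_2} t / q_{\mathbf{b}}$, this set is $W_{\mathbf{c}}^{-1}\bigl((-1)^{c_2} t / q_{\mathbf{b}}\bigr)$, as claimed.

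The only delicate point --- and the one I expect to cost a sentence of care --- is the locus where both structure maps degenerate simultaneously, i.e. $u = p_{\mathbf{c}}$ with $g(p_{\mathbf{c}}) = q_{\mathbf{b}}$. A short computation with the critical values of Lemma~\ref{lem:circuit_basics} (using $(-1)^{c_2} q_{\mathbf{b}} q_{\mathbf{c}} = q_{\mathbf{a}}$) shows this occurs only when $t = q_{\mathbf{a}}$, in which case $F_t$ is the singular fiber of the atomic pencil $W_{\mathbf{a}}$ and $(p_{\mathbf{c}}, p_{\mathbf{b}})$ is its node; since $\pi_2 \circ \Psi_{\mathbf{D}(\mathbf{a})}$ sends that node $p_{\mathbf{a}} = [a_0:\cdots:a_{d+1}]$ to $[a_0:a_1:-a_0-a_1] = p_{\mathbf{c}}$, the asserted equality still holds once $p_{\mathbf{c}}$ is counted among the critical values there. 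Away from $t = q_{\mathbf{a}}$ the argument above is unobstructed and is purely a matter of the tangent-space bookkeeping for a base change of $W_{\mathbf{b}}$ over the curve $P_1$.
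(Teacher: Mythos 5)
Your proposal is correct and follows essentially the same route as the paper: both arguments reduce the claim to the fiber-product description of $F^\circ_t$ in Proposition~\ref{prop:fibprod} together with the fact from Lemma~\ref{lem:circuit_basics} that $q_{\mathbf{b}}$ is the unique critical value of $W_{\mathbf{b}}$, so that the fiber of $\Psi^t_{\mathbf{D}(\mathbf{a})}$ over $q$ is singular exactly when $(-1)^{c_2}t/W_{\mathbf{c}}(q)=q_{\mathbf{b}}$. Your explicit tangent-space bookkeeping and the discussion of the simultaneous degeneration at $t=q_{\mathbf{a}}$ merely spell out details the paper leaves implicit.
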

\begin{proof}
From Proposition~\ref{prop:fibprod}, the fiber of $\Psi_{\mathbf{D} (\mathbf{a})}^t$ over $q$ is isomorphic to the fiber of $W_{\mathbf{b}}$ over $(-1)^{c_2}t W_{\mathbf{c}} (q)$. By Lemma~\ref{lem:circuit_basics}, $q_{\mathbf{b}}$ is the unique critical value of $W_{\mathbf{b}}$ so $(\Psi_{\mathbf{D} (\mathbf{a})}^t)^{-1} (q)$ is singular if and only if $(-1)^{c_2}t / W_{\mathbf{c}} (q) = q_{\mathbf{b}}$.
\end{proof}

Using Proposition~\ref{prop:fibprod} and Corollary~\ref{cor:critvt}, we will accomplish the basic task of identifying the vanishing cycles of $W^\circ_{\mathbf{a}}$ associated to the paths $\delta_k$ defined in equation~\eqref{eq:deltadef}. To do this, we will recall and apply the general machinery of matching paths and matching cycles (see \cite[Section~16]{seidel}). As our Lefschetz bifibrations satisfy particularly strong properties, we will only need a basic version of this machinery which we now discuss. First, let $D_+$ be the closed upper half-disc $\{z \in \mathbb{C}: |z| \leq 1, \Im (z) \geq 0\}$, $\partial_+ D_+ = \{z \in D_+ : |z| = 1\}$ its upper boundary and $D^\circ_+$ its interior.

\begin{definition} \label{def:matchingpath}
	Let $\pi : Y \to S$ be a Lefschetz fibration from a K\"ahler surface $Y$ which restricts to a Lefschetz fibration on a complex curve $\Delta \subset Y$. Given a $\pi|_\Delta$-admissible curve $\delta$ from $q_0$ to $q_1$, an embedded path $\mu_\delta : [-1,1] \to \pi^{-1} (q_0)$ will be called a matching path of $\delta$ if there exists an embedding  $M_\delta : D_+ \to S$ such that 
	\begin{enumerate}
		\item $M_\delta |_{[-1, 1]} = \mu_\delta$, 
		\item $M_\delta |_{D^\circ_+}$ is an embedding into $Y \backslash \Delta$,
		\item $\pi \left( M_\delta (a + ti) \right) = \delta (t)$ for all $a + ti \in D_+$,
		\item $M_\delta (\partial_+ D_+) = \vt_\delta$.
	\end{enumerate}
\end{definition}
Matching paths can be visualized by imagining the movie of points $A_t := \Delta \cap \pi^{-1} (\delta (1 - t))$ in the fiber $F_t := \pi^{-1} (\delta (1 - t))$ with $t$ starting at $0$ and ending at $1$. As $\pi|_\Delta$ is a Lefschetz fibration, at $t = 0$ one of the points in $A_t$ has multiplicity $2$ and for small $t$, it separates into two points in $F_t$. Drawing a path between these points and continuing it via a smooth isotopy until $t = 1$ yields a matching path to $\delta$. Here it is important that the interior of the path never intersects a point of $A_t$. We note that matching paths are defined up to relative isotopy in $\pi^{-1} (q_0) \backslash \Delta$.

In fact, the notion of a \textbf{matching path} for an ordinary Lefschetz fibration $\pi : E \to S$ is a priori distinct from the above definition. In this more basic scenario, suppose $\mu : [-1, 1] \to S$ is a path with $\mu (\pm 1) \in \cv{\pi}$, $\mu (-1, 1) \cap \cv{\pi} = \emptyset$ and let $\mu_\pm : [0, 1] \to S$ be the restrictions $\mu_\pm (t) = \mu (\pm t)$. Then we say that $\mu$ is a matching path if the vanishing cycles $\vc_{\mu_+}$ and $\vc_{\mu_-}$ are Hamiltonian isotopic Lagrangian submanifolds of the fiber $\pi^{-1} (\mu (0))$. The main  advantage of having a matching path is that, assuming one performs a fiberwise Hamiltonian isotopy $I$ of vanishing cycles along $\mu$, one may glue the vanishing thimbles $\vt_{\mu_\pm}$ together to obtain a  Lagrangian sphere $\Sigma_{\mu, I} \subset E$. The sphere $\Sigma_{\mu, I}$ is called the \textbf{matching cycle} of $\mu$. We now recall the following result relating matching paths, matching cycles and matching paths of admissible paths.

\begin{proposition}{\cite[Lemma~16.15]{seidel}} \label{prop:isotopy} Let $\Psi : E \to Y$, $\psi : Y \to S$ and $\pi = \psi \circ \Psi$ form a Lefschetz bifibration and $\Delta = \cv{\Psi}$. Assume $\delta$ is a $\psi|_\Delta$-admissible path from $q_0$ to $q$. Then: 
	\begin{enumerate} \item $\delta$ is a $\pi$-admissible path, 
		\item if $\mu_\delta :[-1,1] \to \psi^{-1} (q_0)$ is a matching path of $\delta$, then it is a matching path for $\Psi^{q_0} : \pi^{-1} (q_0) \to \psi^{-1} (q_0)$,  
		\item for an appropriate isotopy $I$, the matching cycle $\Sigma_{\mu_{\delta}, I}$ is isotopic, as a framed Lagrangian sphere in $\pi^{-1} (q_0)$, to the vanishing cycle $\vc_{\delta}$.
		\end{enumerate}
\end{proposition}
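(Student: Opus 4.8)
The statement is Lemma~16.15 of \cite{seidel}, so the plan is to reproduce its proof; the whole argument is local, reducing to the standard normal form of a Lefschetz bifibration near the critical point $p$ of $\pi$ lying over $q$, carrying out the computation there, and then globalising by symplectic parallel transport away from $p$. Concretely one chooses coordinates in which $\Psi(x_1,\dots,x_d) = (x_1^2 + \cdots + x_{d-1}^2,\, x_d)$ and $\psi(y_1,y_2) = y_1 + y_2^2$, so that $\pi(x) = x_1^2 + \cdots + x_d^2$ is a nondegenerate quadratic, $\cp{\Psi}$ is the $x_d$-axis, $\Delta = \{y_1 = 0\}$, and $\cv{\psi|_\Delta} = \{z = 0\} = \cv{\pi}$ near $q$.

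\emph{Part (1).} Since in a Lefschetz bifibration the critical points of $\pi = \psi\circ\Psi$ all lie on $\cp{\Psi}$ and are carried by $\Psi$ onto $\cp{\psi|_\Delta}$, the above model shows that $\cv{\pi}$ and $\cv{\psi|_\Delta}$ agree near $q$ and that the critical point over $q$ is Morse. As $\delta$ is $\psi|_\Delta$-admissible it is an embedded arc with $\delta(0) = q_0 \notin \cv{\psi|_\Delta}$ and $\delta(t) \in \cv{\psi|_\Delta}$ only for $t = 1$; hence $\delta$ meets $\cv{\pi}$ only at its endpoint and is $\pi$-admissible.

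\emph{Parts (2) and (3).} Over a base point $q_0$ close to $q$, the fibrewise Lefschetz fibration $\Psi^{q_0} : \pi^{-1}(q_0) \to \psi^{-1}(q_0)$ has exactly two critical values, namely the two points of $\Delta \cap \psi^{-1}(q_0)$ that collide to $\cp{\psi|_\Delta}$ as $q_0 \to q$; the half-disc embedding $M_\delta : D_+ \to Y$ witnessing that $\mu_\delta$ is a matching path of $\delta$ for $\psi|_\Delta$ presents $\mu_\delta$ as an embedded arc joining precisely these two points, with interior disjoint from $\Delta$. Symplectic parallel transport of $\Psi^{q_0}$ along the two halves $(\mu_\delta)_\pm$ carries the two vanishing cycles into the common fibre over $\mu_\delta(0)$; in the model each is the standard real $(d-2)$-sphere, and the fibrewise trivialisation of the symplectic connection supplied by $M_\delta$ makes them Hamiltonian isotopic in general — this is exactly the condition making $\mu_\delta$ a matching path for $\Psi^{q_0}$, and $I$ is the resulting isotopy, proving (2). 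For (3): by construction $\Sigma_{\mu_\delta, I}$ is the union of the two thimbles of $\Psi^{q_0}$ over $(\mu_\delta)_\pm$, glued along their common boundary $(d-2)$-sphere, and in the local model this is the round sphere $\{\sqrt{z_0}\,v : v \in S^{d-1} \subset \mathbb{R}^d\}$ with the framing induced by the real structure. But that is literally $\vc_\delta$, the vanishing cycle of the quadratic singularity $\pi$ over $\delta$, with its standard framing; transporting the model identification out to all of $\pi^{-1}(q_0)$ along the symplectic connection of $\pi$ and choosing $I$ compatibly gives a framed Lagrangian isotopy $\Sigma_{\mu_\delta, I} \simeq \vc_\delta$ in $\pi^{-1}(q_0)$.

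\emph{Main obstacle.} The delicate step is the coherent choice of the isotopy $I$ together with the bookkeeping of framings: one must match the fibrewise Hamiltonian isotopy used to assemble the matching cycle with the symplectic parallel transport of the ambient fibration $\pi$ near $p$, so that $\Sigma_{\mu_\delta, I}$ and $\vc_\delta$ are not merely isotopic but framed-isotopic. Following \cite{seidel}, this rests on the contractibility of the relevant spaces of trivialisations, which permits interpolation between the local model and the global picture; the remainder is the formal transport of the local computation.
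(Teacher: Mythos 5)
The paper offers no proof of this proposition: it is imported verbatim from \cite[Lemma~16.15]{seidel}, so there is no in-paper argument to compare against, only Seidel's original one. Your sketch follows the same strategy as Seidel's proof --- reduction to the holomorphic normal form of the bifibration near the critical point, the model computation identifying the matching cycle of the fibrewise quadratic with the real vanishing sphere of $\pi$, and globalization by symplectic parallel transport --- and is sound at the level of a sketch; the points you flag (the coherent choice of the fibrewise isotopy $I$ and the framing bookkeeping) are exactly where Seidel's Section~16 does the real work, and note the minor imprecisions that ``exactly two critical values'' of $\Psi^{q_0}$ holds only in the local model (globally only the two endpoints of $\mu_\delta$ are relevant), and that ``all critical points of $\pi$ lie on $\cp{\Psi}$'' uses that $\psi$ has no critical points of its own in the relevant region, which is part of the bifibration axioms rather than automatic.
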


In order to utilize Proposition~\ref{prop:fibprod} to produce matching paths, it will be helpful to notationally distinguish $W_{\mathbf{a}}$, $W_{\mathbf{b}}$ and $W_{\mathbf{c}}$-distinguished paths. Given any $u \in \mathbb{C}$ with $\Re (u) \ne 0$ and $\lfloor \Im (u) / 2\pi \rfloor = k$, take $*_{\mathbf{a}} = e^{u } q_{\mathbf{a}}$ and let $\alpha_k (s) = e^{u (1 - s)}q_{\mathbf{a}}$ be the $W_{\mathbf{a}}$-admissible path $\delta_k$ defined in equation~\eqref{eq:deltadef}. Let $r_{\mathbf{a}} = \ln ((-1)^{c_2} q_{\mathbf{a}}^{-1})  \in \mathbb{C}$ satisfy $0 \leq \Im ( r_{\mathbf{a}} ) < 2\pi $ and choose $v$ with $\Re (v) < 0$. Taking $k = \lfloor (v + r_\mathbf{a})/ 2 \pi \rfloor$,  we define $v$-dependent admissible paths $\beta_k^v$ and $\gamma_k^v$ via
\begin{align} \label{eq:bgdef}
\beta^v_k (s) & = e^{(v + r_{\mathbf{a}})(1 - s)} q_{\mathbf{b}} , & \gamma^v_k (s) & = e^{(v + r_{\mathbf{a}})(1 - s)} q_{\mathbf{c}} .
\end{align}
The $k$-subscript is redundant, and will occasionally be omitted, but will be nonetheless useful to distinguish different admissible paths. Noting that $q_{\mathbf{a}} = (-1)^{c_2} q_{\mathbf{b}} q_{\mathbf{c}}$, one also may easily compute the relation
\begin{align} \label{eq:betagamma}
e^v \beta^v (1 - s)^{-1} = \gamma^v (s).
\end{align}

\begin{proposition} \label{prop:matchingpaths}
Let $Y =  \mathbb{C}^* \times P_1$, $\pi_1 : Y \to \mathbb{C}^*$, $\Delta = \cv{\Psi_{\mathbf{D} (\mathbf{a})}}$ and $u \in \mathbb{C}$ with $\Im (u) = 2 \pi k$ and $\Re (u ) < 0$. Then: 
\begin{enumerate}[label=(\roman*),ref=\theproposition(\roman*)]
	\item \label{prop:matchingpaths:1} for $q_0 = *_{\mathbf{a}}$ and $q_1 = q_{\mathbf{a}}$, the $W_{\mathbf{a}}$-admissible curve  $\alpha_k$ is $\pi_1|_{\Delta}$-admissible,
	\item \label{prop:matchingpaths:2} the vanishing thimble $\vt_{\gamma^u_k}$ is a matching path of $\alpha_k$.
\end{enumerate}
\end{proposition}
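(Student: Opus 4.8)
\subsection*{Proof idea} The plan is to push the whole statement down to the one--dimensional circuit $W_{\mathbf{c}}$ by means of the fiber--product description in Proposition~\ref{prop:fibprod} together with Corollary~\ref{cor:critvt}. These results pin down the discriminant $\Delta=\cv{\Psi_{\mathbf{D}(\mathbf{a})}}\subset\mathbb{C}^*\times P_1$ completely: over $t\in\mathbb{C}^*$ it is the set $\{t\}\times W_{\mathbf{c}}^{-1}((-1)^{c_2}t/q_{\mathbf{b}})$, so the second projection restricts to an isomorphism $\pi_2|_\Delta : \Delta\cong P_1$, and under this isomorphism followed by the rescaling $t\mapsto(-1)^{c_2}q_{\mathbf{b}}t$ of the $\mathbb{C}^*$--base the map $\pi_1|_\Delta$ is identified with $W_{\mathbf{c}} : P_1\to\mathbb{C}^*$. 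In particular $\pi_1|_\Delta$ is an honest Lefschetz fibration, so that ``$\pi_1|_\Delta$--admissible'' is meaningful, and since $W_{\mathbf{c}}$ is atomic with critical value $q_{\mathbf{c}}$ (Lemma~\ref{lem:circuit_basics}), $\pi_1|_\Delta$ has the single critical value $(-1)^{c_2}q_{\mathbf{b}}q_{\mathbf{c}}=q_{\mathbf{a}}$, using the identity $q_{\mathbf{a}}=(-1)^{c_2}q_{\mathbf{b}}q_{\mathbf{c}}$ recorded above equation~\eqref{eq:betagamma}.

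Granting this, part (i) is a direct check on $\alpha_k(s)=e^{u(1-s)}q_{\mathbf{a}}$: it is a smoothly embedded arc because $\exp$ is injective on the segment $\{u(1-s):s\in[0,1]\}$ (here $\Re(u)\neq0$); its initial point $*_{\mathbf{a}}=e^uq_{\mathbf{a}}\neq q_{\mathbf{a}}$ lies off $\cv{\pi_1|_\Delta}=\{q_{\mathbf{a}}\}$; it terminates at $\alpha_k(1)=q_{\mathbf{a}}\in\cv{\pi_1|_\Delta}$; and $\alpha_k(s)=q_{\mathbf{a}}$ forces $e^{u(1-s)}=1$, hence $s=1$, so the interior avoids the critical value.

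For part (ii) I would build the half--disc $M_{\alpha_k} : D_+\to\mathbb{C}^*\times P_1$ of Definition~\ref{def:matchingpath} directly out of the $W_{\mathbf{c}}$--vanishing thimble. Transporting $\alpha_k$ to the $W_{\mathbf{c}}$--side via the base rescaling above turns it into the admissible path $s\mapsto e^{u(1-s)}q_{\mathbf{c}}$, which --- using $q_{\mathbf{a}}=(-1)^{c_2}q_{\mathbf{b}}q_{\mathbf{c}}$ and the relation~\eqref{eq:betagamma} between the $\beta$-- and $\gamma$--paths --- is $\gamma_k^u$ after the standard identification of fibers of $W_{\mathbf{c}}$. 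One then carries the thimble $\vt_{\gamma_k^u}\subset P_1$ through the trivial $P_1$--bundle over $\alpha_k$ and caps it at the $\pi_1|_\Delta$--critical point over $q_{\mathbf{a}}$, which by equation~\eqref{eq:cpPsi} is the point $(q_{\mathbf{a}},p_{\mathbf{c}})$; this yields $M_{\alpha_k}$ with $M_{\alpha_k}|_{[-1,1]}=\vt_{\gamma_k^u}$ and $M_{\alpha_k}|_{\partial_+ D_+}$ equal to the $\pi_1|_\Delta$--thimble of $\alpha_k$. Conditions (1), (3), (4) of Definition~\ref{def:matchingpath} are then formal, and the embeddedness of $M_{\alpha_k}|_{D^\circ_+}$ into $(\mathbb{C}^*\times P_1)\setminus\Delta$ follows from atomicity of $W_{\mathbf{c}}$: the movie $A_t=\Delta\cap\pi_1^{-1}(\alpha_k(1-t))$ splits off exactly one colliding pair, which traces out $\vt_{\gamma_k^u}$ without running into the remaining sheets of $\Delta$. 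This is just the movie description of matching paths recalled after Definition~\ref{def:matchingpath}, applied to $\pi_1|_\Delta\cong W_{\mathbf{c}}$.

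The main work lies in part (ii): one must chase the rescaling constant $(-1)^{c_2}q_{\mathbf{b}}$ and the logarithms carefully enough to recognize the reparametrized path as exactly $\gamma_k^u$, and one must verify that the half--disc $M_{\alpha_k}$ can genuinely be chosen embedded --- that along the spiral $\alpha_k$ the pair of sheets of $\Delta$ colliding at $q_{\mathbf{a}}$ stays disjoint from the other sheets until $s=1$. Both come down to atomicity of $W_{\mathbf{c}}$ and the explicit form of $\alpha_k$, but that is where the genuine content sits; the remainder is bookkeeping.
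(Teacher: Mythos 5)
Your proposal is correct and follows essentially the same route as the paper: part (i) by recognizing $\pi_1|_{\Delta}$ as (a rescaling of) the one-dimensional circuit $W_{\mathbf{c}}$ with unique critical value $q_{\mathbf{a}}$ (you derive this from Corollary~\ref{cor:critvt}, the paper reads it off the explicit parametrization in equation~\eqref{eq:cvPsi}), and part (ii) by sweeping the family of $W_{\mathbf{c}}$-vanishing thimbles over the points of $\alpha_k$, with endpoints on $\Delta$ and the thimble collapsing to $p_{\mathbf{c}}$ at $s=1$, to produce the half-disc $M_{\alpha_k}$ with $\mu_{\alpha_k}=\vt_{\gamma^u_k}$. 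The bookkeeping you flag (matching the rescaled path with $\gamma^u_k$ and the smooth variation of the thimbles $\vt_{\gamma^{u_s}_{k_s}}$) is exactly what the paper's proof carries out.
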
 
\begin{proof}
To prove \ref{prop:matchingpaths:1}, we observe that  equation~\eqref{eq:cvPsi} of Proposition~\ref{prop:circuitbiLefschetz} shows that $\pi_1|_{\Delta}$ is in fact equivalent to yet another one-dimensional circuit potential. Thus by Lemma~\ref{lem:circuit_basics}, there is a unique Morse critical value which must equal the critical value $q_{\mathbf{a}}$ of $\pi_1 \circ \Psi_{D (\mathbf{a})} = W_{\mathbf{a}}$ implying that any $W_{\mathbf{a}}$-admissible path is also $\pi_1|_{\Delta}$-admissible.  In the terminology of \cite[Section~15]{seidel}, we see that the fake critical value set $\textnormal{Fakev} (W^\circ_\mathbf{a}, \Psi_{\mathbf{D} ( \mathbf{a}) })$ is empty so that for every $t \ne q_{\mathbf{a}}$, the map $\Psi_{\mathbf{D} (\mathbf{a})}^t : F^\circ_t \to P_1$ is a Lefschetz fibration.

For \ref{prop:matchingpaths:2} we show there exists a map $M_{\alpha_k} : D_+ \to Y$ satisfying Definition~\ref{def:matchingpath} with $\mu_{\alpha_k} = \vt_{\gamma^u_{k}}$. To avoid notational confusion, take $\vt^{\Delta}_{\alpha_k}$ to be the vanishing thimble of $\alpha_k$ with respect to the function $\pi_1|_\Delta$. As $\pi_1|_\Delta$ is Morse, there is a parametrization  $M^\partial_{\alpha_k} : \partial_+ D_+ \to \vt^{\Delta}_{\alpha_k} \subset \Delta$ for which $\pi_1 (M^\partial_{\alpha_k} (r + s i)) =  \alpha_k (s)$. We will extend $M^{\partial}_{\alpha_k}$ to $M_{\alpha_k} : D_+ \to Y$.

First we take $u_s = u (1 - s)$  and let $k_s = \lfloor u_s / 2 \pi \rfloor$. For $s \in [0, 1)$, define $\tilde{\gamma}_s : [- \sqrt{1 - s^2}, \sqrt{1 - s^2}] \to P_1$ to be a smoothly varying parameterization of the vanishing thimble $\vt_{s}$ of $\gamma^{u_s}_{k_s}$. By the definition of $\gamma^{u_s}_{k_s}$, we have  
\begin{align*} W_{\mathbf{c}} (\tilde{\gamma}_s (\pm \sqrt{1 - s^2})) & = \frac{\alpha_k (s)}{q_{\mathbf{a}}} q_{\mathbf{c}} , \\ & = \frac{(-1)^{c_2} \alpha_k (s)}{q_{\mathbf{b}}}.
\end{align*} Thus by Corollary~\ref{cor:critvt} and the fact that $\Delta = \cv{\Psi^t_{\mathbf{D} (\mathbf{a} )}}$ we have that $\tilde{\gamma}_s (\pm \sqrt{1 - s^2} ) \in \left(\pi_1|_{\Delta} \right)^{-1} (\alpha_k (s))$.  

As $s$ tends to $1$, $\alpha_k (s)$ tends to $q_{\mathbf{a}}$ which implies that $\gamma^{\alpha_{k} (s) / q_{\mathbf{a}}}_{k_s}$ is approaching a constant path. In turn, the vanishing thimble, $\vt_s \subset P_1$ parametrized by $\tilde{\gamma}_s$ is also approaching the critical point $p_{\mathbf{c}} \in \left(\pi_1|_{\Delta} \right)^{-1} (\alpha_k (1))$. This implies that $\tilde{\gamma}_s$ sends its endpoints to the vanishing thimble of $\pi_1|_{\Delta}$ over $\alpha_k (s)$ and in particular to the second coordinate of $M^\partial_{\alpha_k} (\sqrt{1 - s^2} + i s)$. Thus we may extend $M^\partial_{\alpha_k}$ as 
\begin{align*}
M_{\alpha_k} (r + s i) = \left(\alpha_{k} (s), \tilde{\gamma} (r) \right).
\end{align*}
The fact that $M_{\alpha_k}$ satisfies Definition~\ref{def:matchingpath} to give the matching path $\tilde{\gamma}_0$ of $\alpha_k$ is an immediate consequence of its construction. As $\tilde{\gamma}_0$ parametrizes the thimble $\vt_{\gamma^{u}_k}$, the conclusion of the proposition follows.
\end{proof}

Propositions~\ref{prop:isotopy} and \ref{prop:matchingpaths} then yield descriptions of the vanishing cycles of the $W^\circ_{\mathbf{a}}$-admissible paths $\delta_k$ as pullbacks of the vanishing thimbles of $W_{\mathbf{b}}$ along the one-dimensional circuit potential $(-1)^{c_2}t / W_{\mathbf{c}}$. The language and notation for the procedure of doubling thimbles to obtain spheres was introduced in Definition~\ref{def:vs}. This description gives the final preparatory input to accomplish the induction step in the proof of Theorem~\ref{thm:Amodel}.
\begin{proposition} \label{prop:alphavc}
Suppose $u \in \mathbb{C}$ with $\Re (u) < 0$ and $\Im (u) = 2\pi k$, and let  $f  = (-1)^{c_2} e^u / W_{\mathbf{c}}$. Then, 
	\begin{enumerate}[label=(\roman*),ref=\theproposition(\roman*)]
		\item \label{prop:alphavc:1} the path $\beta^u_{k}$ is $(W_{\mathbf{b}}, f)$-admissible,
		\item \label{prop:alphavc:2} Taking $\vt_{\beta^u_k}$ to be the vanishing thimble of $W_{\mathbf{b}}$ over $\beta^u_k$, the vanishing cycle $\vc_{\alpha_k}$ is Hamiltonian isotopic to $\vs^{f}_{\beta^u_k} = f^* \vt_{\beta^u_k} $.
	\end{enumerate}
\end{proposition}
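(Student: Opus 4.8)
The plan is to deduce both statements by combining the matching-path description from Proposition~\ref{prop:matchingpaths} with the fiber-product structure of Proposition~\ref{prop:fibprod}, reorganized through the language of $(\pi,f)$-admissible paths and doubled thimbles from Definition~\ref{def:vs}. First I would set up the one-dimensional circuit $W_{\mathbf{c}} : P_1 \to \mathbb{C}^*$ and the branched cover $f = (-1)^{c_2} e^u / W_{\mathbf{c}}$ of the $W_{\mathbf{b}}$-base $\mathbb{C}^*$ it induces; by Lemma~\ref{lem:circuit_basics} applied to $\mathbf{c}$, the map $W_{\mathbf{c}}$ has a single critical value $q_{\mathbf{c}}$, so $f$ has exactly one critical value (an ordinary branch point), lying over $(-1)^{c_2} e^u / q_{\mathbf{c}}$. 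For (i), I need to check that $\beta^u_k$, as defined in equation~\eqref{eq:bgdef}, meets $\cv{f}$ exactly at its starting point $s=0$. Since $\beta^u_k(0) = e^{v+r_{\mathbf{a}}} q_{\mathbf{b}} = e^{u + r_{\mathbf{a}}} q_{\mathbf{b}}$ and the branch point of $f$ is where $W_{\mathbf{c}} = q_{\mathbf{c}}$, i.e.\ at the value $(-1)^{c_2} e^u / q_{\mathbf{c}} = (-1)^{c_2} e^u q_{\mathbf{b}} / q_{\mathbf{a}} \cdot (q_{\mathbf{a}}/((-1)^{c_2} q_{\mathbf{b}} q_{\mathbf{c}}))$; using $q_{\mathbf{a}} = (-1)^{c_2} q_{\mathbf{b}} q_{\mathbf{c}}$ and $r_{\mathbf{a}} = \ln((-1)^{c_2} q_{\mathbf{a}}^{-1})$ this reduces to $e^{u+r_{\mathbf{a}}} q_{\mathbf{b}}$, matching $\beta^u_k(0)$; then I check that the rest of $\beta^u_k$ avoids $\cv f$ because $\Re(v+r_{\mathbf{a}})$ has constant sign along the ray, keeping the modulus away from the branch value for $s\in(0,1]$. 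This is a short explicit computation once the normalizations are lined up.

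For (ii), the heart of the argument is to identify two descriptions of the same Lagrangian sphere in $F_t^\circ \cong P_1 \times_{\mathbb{C}^*} P_{d-1}$ (Proposition~\ref{prop:fibprod}, left diagram). On one hand, Propositions~\ref{prop:isotopy} and~\ref{prop:matchingpaths} say that $\vc_{\alpha_k}$ inside $\pi_1^{-1}(q_0) = F^\circ_{*_{\mathbf{a}}}$ is, up to framed isotopy, the matching cycle $\Sigma_{\mu_{\alpha_k}, I}$ of the matching path $\mu_{\alpha_k} = \vt_{\gamma^u_k} \subset P_1$ for the fibration $\Psi^{q_0}_{\mathbf{D}(\mathbf{a})} : F^\circ_{q_0} \to P_1$. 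On the other hand, by the left-hand pullback square of Proposition~\ref{prop:fibprod}, $\Psi^{q_0}_{\mathbf{D}(\mathbf{a})}$ is literally the pullback of $W_{\mathbf{b}} : P_{d-1} \to \mathbb{C}^*$ along the map $P_1 \to \mathbb{C}^*$, $z \mapsto (-1)^{c_2} q_0 / W_{\mathbf{c}}(z)$, which is exactly $f$ (taking $u$ so that $e^u = q_0 / q_{\mathbf{a}}$). So I would argue: the matching cycle over $\vt_{\gamma^u_k}$ in this pulled-back fibration is, by construction, obtained by gluing two copies of the vanishing thimble of $W_{\mathbf{b}}$ over the image path — and that image path is precisely $\beta^u_k$, using the relation~\eqref{eq:betagamma} $e^v \beta^v(1-s)^{-1} = \gamma^v(s)$, which says $W_{\mathbf{c}}$ carries $\gamma^u$ to $e^u/\beta^u$, equivalently $f$ carries $\vt_{\gamma^u_k}$ endpoints along $\beta^u_k$. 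Matching of thimbles over a path in the base of a branched cover is exactly the construction $\vs^f_{\beta^u_k} = f^* \vt_{\beta^u_k}$ of Definition~\ref{def:vs}. Hence $\vc_{\alpha_k} \simeq \Sigma_{\vt_{\gamma^u_k}, I} = \vs^f_{\beta^u_k}$ up to Hamiltonian isotopy (the isotopy ambiguity $I$ in the matching-cycle construction being absorbed into the Hamiltonian-isotopy statement, as in \cite[Section~16]{seidel}).

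The main obstacle I expect is bookkeeping the identification of the base cover: I must verify carefully that the composite $z \mapsto (-1)^{c_2} q_0/W_{\mathbf{c}}(z)$ appearing in Proposition~\ref{prop:fibprod} agrees, as a branched cover of the $W_{\mathbf{b}}$-base, with the map $f$ in the statement, and that under this identification the matching path $\vt_{\gamma^u_k}$ of $\alpha_k$ (which lives upstairs in $P_1$) pushes forward to the admissible path $\beta^u_k$ (which lives downstairs in the $W_{\mathbf{b}}$-base) — this is where relations~\eqref{eq:bgdef}, \eqref{eq:betagamma}, and the identity $q_{\mathbf{a}} = (-1)^{c_2} q_{\mathbf{b}} q_{\mathbf{c}}$ all have to be used consistently, including the floor-function conventions fixing the sheet $k$. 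A secondary point requiring care is that Proposition~\ref{prop:matchingpaths} produces the matching path for the fiber $F^\circ_{q_0}$ with $q_0 = *_{\mathbf{a}}$, so I should keep the basepoint $u$ with $\Re(u) < 0$, $\Im(u) = 2\pi k$ throughout and only at the end translate back to $\vc_{\alpha_k} = \vc_{\delta_k}$; once the cover identification is pinned down, both (i) and (ii) follow formally.
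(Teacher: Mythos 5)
Your proposal is correct and follows essentially the same route as the paper: part (i) is the same computation identifying the unique critical value of $f$ (coming from $q_{\mathbf{c}}$ via Lemma~\ref{lem:circuit_basics}) with $\beta^u_k(0)$ through $q_{\mathbf{a}} = (-1)^{c_2} q_{\mathbf{b}} q_{\mathbf{c}}$ and the definition of $r_{\mathbf{a}}$, and part (ii) combines Propositions~\ref{prop:isotopy} and \ref{prop:matchingpaths} with the pullback square of Proposition~\ref{prop:fibprod} and relation~\eqref{eq:betagamma} exactly as the paper does, including the key point that $\vt_{\gamma^u_k}$ is the lift of $\beta^u_k$ containing the critical point of $f$, so that $f^*\vt_{\beta^u_k}$ is the matching cycle over $\vt_{\gamma^u_k}$ and hence Hamiltonian isotopic to $\vc_{\alpha_k}$.
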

\begin{proof}
	The first claim follows from the fact that $(-1)^{c_2}e^u / q_{\mathbf{c}}$ is the unique critical value of $f$ which, by equation~\eqref{eq:bgdef}, also equals $\beta^u_k (0)$. For \ref{prop:alphavc:2}, one applies Propositions~\ref{prop:isotopy} and \ref{prop:matchingpaths:2} to see that the vanishing cycle $\vc_{\alpha_k}$ is isotopic to a matching cycle of $\Psi^{*_{\mathbf{a}}}_{\mathbf{D} (\mathbf{a})}$ over the matching path $\vt_{\gamma^u_k}$. But, by equation~\eqref{eq:betagamma},  $f (\vt_{\gamma_{k}^u}) = (-1)^{c_2} e^u / W_{\mathbf{c}} (\vt_{\gamma^u_k} ) = (-1)^{c_2} e^u /  \gamma^{u}_k = \beta^u_k$ and, more precisely,  $\vt_{\gamma_k^u}$ is the unique pullback of $\beta^u_k$ along $f$ which contains the unique critical point of $f$. Using Proposition~\ref{prop:fibprod}, $f^* \vt_{\beta_{k}^u}$ is the matching cycle fibered over $\vt_{\gamma_k^u}$ implying it is Hamiltonian isotopic to $\vc_{\alpha_k}$. 
\end{proof}
With these propositions in hand, we perform the induction step and complete the proof of our main theorem. 

\subsection{Proof of Theorem~\ref{thm:Amodel}} \label{sec:proof}
	Assume the claim is true for any  $\tilde{\mathbf{a}} \in \mathbb{Z}^{D}$ satisfying equations~\eqref{eq:balance} with $D < d + 2$. Given $\mathbf{a}, \nu_{\mathbf{a}} \in \mathbb{Z}^{d + 2}$, use equations~\eqref{eq:abc} to define $\mathbf{b}, \nu_{\mathbf{b}} \in \mathbb{Z}^{d + 1}$ and $\mathbf{c}, \nu_{\mathbf{c}} \in \mathbb{Z}^3$. We will assume that the signature of $\mathbf{a}$ is $(p, q)$ with $p > 0$, otherwise take $- \mathbf{a}$ and apply Koszul duality. 
	
 Choose $u \in \mathbb{C}$ so that $\Re (u) < 0$, $\Im (u) = 0$  and set a basepoint of $\alpha_k$ as $*_{\mathbf{a}} = e^u q_{\mathbf{a}}$. We fix the basepoint $*_{\mathbf{b}} = e^{u + r_{\mathbf{a}}} q_{\mathbf{b}}$ of $W_{\mathbf{b}}$ where $r_{\mathbf{a}}$ was defined before equation~\eqref{eq:bgdef}.
	
Note that $\Vol (\mathbf{b}) = \Vol (\mathbf{a})$ so that for $0 \leq n < \Vol (\mathbf{a})$ we may apply the induction hypothesis for $\mathbf{b}$. Thus there is a collection $\{L_0^\mathbf{b}, \ldots, L_{n}^\mathbf{b}\}$ of Lagrangian vanishing cycles corresponding to a distinguished basis of paths $\{ \beta^u_0, \ldots, \beta^u_n \} $ and an isomorphism
	\begin{align} \label{eq:equivalence2} \Xi_{\mathbf{b}, n} :  \mathcal{C}_{\mathbf{b}, \nu_{\mathbf{b}}, n}  \to \mathcal{A}_{\mathbf{b}, \nu_{\mathbf{b}},n}  \end{align}
	for which $\Xi_{\mathbf{b}, n} (R_{\mathbf{b}} (k)) = L^\mathbf{b}_k$. 
	
We will now specify a basis for $\Hom_{\mathcal{A}_{\mathbf{a}, \nu_{\mathbf{a}}, n } } (L_j, L_k )$ along with an isomorphism to $\Hom_{\mathcal{C}_{\mathbf{a}, \nu_{\mathbf{a}}, n }} (R_{\mathbf{a}} (j), R_{\mathbf{a}} (k))$.   From Proposition~\ref{prop:alphavc} the vanishing cycles $\vc_{\alpha_k}$ are isotopic to matching cycles of $\Psi^{*_{\mathbf{a}}}_{\mathbf{D} (\mathbf{a})}$ over the thimbles $\vt_{\gamma^u_{k}}$. Using the parametrization $\phi : \mathbb{C} \backslash \{0, -1\} \to P_1$ from Proposition~\ref{prop:1dthimble}, these thimbles were shown to be isotopic to $\mu_k$ in	Proposition~\ref{prop:intnumb}, or their Hamiltonian perturbations in Proposition~\ref{prop:mpd}. We recall that $\mu_k = \exp (\tilde{\mu}_k) : [C / a_0, - C / a_1] \to \mathbb{C} \backslash \{0, -1\}$ was a path where $C$ is defined by the condition 
	\begin{align} \label{eq:Cdef} W_{\mathbf{c}} (\phi (\exp {(C/a_i )}) ) = *_{\mathbf{c}} \end{align}
	for $i = 0, 1$ and $\tilde{\mu}_k$ was defined in equation~\eqref{eq:tildemudef}. 
	
	The intersections of these thimbles were found in equation~\eqref{eq:intsects} to be 
\begin{align*} \mu_j \cap \mu_k = \left\{ z^m : m \in \mathbb{Z}, - \frac{k - j}{a_0}  \leq m \leq \frac{k - j}{a_1}  \right\}
\end{align*}
 where $z^m = \exp (\tilde{\mu}_k ( C m / (j - k) ) )$. Thus, perturbing the matching cycles so that they lie over $\mu_i$, we may partition the intersection $\vc_{\alpha_j} \cap \vc_{\alpha_k}$ to obtain a decomposition of the morphism group 
\begin{align} \label{eq:dirsum} \Hom_{\mathcal{A}_{\mathbf{a}, \nu_{\mathbf{a}}, n} } (L_j, L_k ) & = \bigoplus_{\frac{k - j}{a_0}  \leq m \leq \frac{k - j}{a_1}} \Hom_{\mathbf{a}, m} (L_j, L_k).
\end{align}
Here $\Hom_{\mathbf{a}, m} (L_j, L_k)$ is generated by the intersections of $\vc_{\alpha_j} \cap \vc_{\alpha_k}$ lying over $z^m \in \mu_j \cap \mu_k$ relative to $\Psi^{*_\mathbf{a}}_{\mathbf{D} (\mathbf{a})}$. For the moment we fix $m$ and let 
\begin{align} \label{eq:m0m1} m_0 & = \max \{0, -m\}, & m_1 & = \max \{0, m\} , 
	\end{align}
so that $m$ is uniquely written as $m_1 - m_0$. Also define the constants
\begin{align} \label{eq:sigma}
\sigma_w (m ) & = a_0 m_0 + a_1 m_1, &
\sigma_d (m ) & = 2 ( \nu_{\mathbf{a}} (0) m_0 + \nu_{\mathbf{a}} (1) m_1).
\end{align}
These constants will be used repeatedly to denote shifts in weight and degree, respectively.

\begin{claim} \label{claim1} For any $0 \leq l < \Vol ( \mathbf{a}) - k + j$, there is an isomorphism 
	\begin{align} \label{eq:vcavcb} \xi^{l, m}_{j, k} :  \Hom^{\bullet - \sigma_d (m)}_{\mathcal{A}_{\mathbf{b},\nu_{\mathbf{b}}, n}} (L_{j + l}^{\mathbf{b}}, L^{\mathbf{b}}_{k + l  - \sigma_w (m)}) \stackrel{\cong}{\longrightarrow} \Hom^\bullet_{\mathbf{a}, m} (L_j, L_k).
	\end{align}
\end{claim}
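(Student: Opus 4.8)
The plan is to build the isomorphism $\xi^{l,m}_{j,k}$ by combining the matching-cycle description of the vanishing cycles $\vc_{\alpha_j},\vc_{\alpha_k}$ from Proposition~\ref{prop:alphavc} with the fiber-product structure of the fibers of $W^\circ_{\mathbf{a}}$ supplied by Proposition~\ref{prop:fibprod}, and then to localize the Floer complex over the single intersection point $z^m$. By Propositions~\ref{prop:matchingpaths} and \ref{prop:alphavc}, after a Hamiltonian isotopy $\vc_{\alpha_j}$ and $\vc_{\alpha_k}$ are matching cycles of $\Psi^{*_{\mathbf{a}}}_{\mathbf{D}(\mathbf{a})}$ fibered over the paths $\mu_j,\mu_k$ of Proposition~\ref{prop:intnumb}, carrying the Hamiltonian perturbations of Proposition~\ref{prop:mpd}; hence they meet only over the points $z^m\in\mu_j\cap\mu_k$ of \eqref{eq:intsects}, which is exactly the source of the decomposition \eqref{eq:dirsum}. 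The first step is to fix $z^m$ and invoke the local model of a Lefschetz bifibration near a matching cycle (\cite[Section~16]{seidel}) to see that $\Hom^{\bullet}_{\mathbf{a},m}(L_j,L_k)$ is computed, inside the fiber $F_{z^m}:=(\Psi^{*_{\mathbf{a}}}_{\mathbf{D}(\mathbf{a})})^{-1}(z^m)$, by the two Lagrangians obtained as the fibers of the matching cycles over $z^m$, with a degree shift equal to the local index contribution of the $P_1$-direction at $z^m$.

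The second step identifies these data with the $\mathbf{b}$-picture. By Proposition~\ref{prop:fibprod} and Corollary~\ref{cor:critvt} the fiber $F_{z^m}$ is identified with a smooth fiber of $W_{\mathbf{b}}$, and the fiber of a matching cycle over $z^m$ is the $W_{\mathbf{b}}$-parallel transport of the vanishing cycle $\vc_{\beta^u_k}$ along $\beta^u_k$; the task is to rewrite this, up to Hamiltonian isotopy, as the vanishing cycle of a $\beta^u$-path with a definite winding number. Since $z^m=\exp(\tilde{\mu}_k(Cm/(j-k)))$ and, by \eqref{eq:intsects} and \eqref{eq:m0m1}, the loop in $P_1$ separating $\mu_j$ from $\mu_k$ at $z^m$ wraps $m_0$ times around $0$ and $m_1$ times around $\infty$, and since the map $f=(-1)^{c_2}e^u/W_{\mathbf{c}}$ has a pole of order $a_0$ at $0$ and a zero of order $a_1$ at $\infty$ (by \eqref{eq:explicit} applied to $\mathbf{c}$), this loop is carried by $f$ to a loop of winding number $a_0 m_0+a_1 m_1=\sigma_w(m)$ about the puncture of the base of $W_{\mathbf{b}}$. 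Trivializing the $W_{\mathbf{b}}$-fibration over $\mu_j$, the matching-cycle fibers over $z^m$ are thus identified with $L^{\mathbf{b}}_j$ and the $\sigma_w(m)$-fold monodromy image of $L^{\mathbf{b}}_k$, that is, with $L^{\mathbf{b}}_{k-\sigma_w(m)}$; translating the basepoint of $W_{\mathbf{b}}$ by $l$-fold symplectic monodromy around the puncture --- a graded symplectomorphism, as in the proof of Proposition~\ref{prop:unfoldedfuk} --- shifts both indices by $l$. The hypothesis $0\leq l<\Vol(\mathbf{a})-k+j=\Vol(\mathbf{b})-(k-j)$, together with $\sigma_w(m)\leq k-j$, guarantees that $j+l$ and $k+l-\sigma_w(m)$ lie in the range where the induction hypothesis \eqref{eq:equivalence2} for $\mathbf{b}$ applies, and yields an identification of $\Hom^{\bullet}_{\mathbf{a},m}(L_j,L_k)$, before the degree shift, with $\Hom^{\bullet}_{\mathcal{A}_{\mathbf{b},\nu_{\mathbf{b}},n}}(L^{\mathbf{b}}_{j+l},L^{\mathbf{b}}_{k+l-\sigma_w(m)})$.

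The third step fixes the degree shift $\sigma_d(m)$. By Proposition~\ref{prop:fibprod} the chosen volume form splits as $h^*(\eta_{\nu_{\mathbf{c}}}\wedge\eta_{\nu_{\mathbf{b}}})=(-1)^{\nu_{\mathbf{a}}(2)-1}\,\eta_{\nu_{\mathbf{a}}}$, so the $\eta^2_{\nu_{\mathbf{a}}}$-grading of the matching cycle $\vc_{\alpha_k}$ decomposes into a fiberwise contribution, graded by $\eta^2_{\nu_{\mathbf{b}}}$, and a $P_1$-contribution, graded by $\eta^2_{\nu_{\mathbf{c}}}$. The fiberwise part is by construction the grading used to define $\mathcal{A}_{\mathbf{b},\nu_{\mathbf{b}},n}$, while the $P_1$-part is exactly the one-dimensional index computation carried out in the proof of Proposition~\ref{prop:thma1d}. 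Since $\nu_{\mathbf{c}}(0)=\nu_{\mathbf{a}}(0)$ and $\nu_{\mathbf{c}}(1)=\nu_{\mathbf{a}}(1)$, and $\eta_{\nu_{\mathbf{c}}}$ has order $\nu_{\mathbf{a}}(0)-1$ at $0$ and $\nu_{\mathbf{a}}(1)-1$ at $\infty$ by \eqref{eq:etalc}, the floor-function index formula of that proof gives a $P_1$-contribution of $2\nu_{\mathbf{a}}(0)m_0+2\nu_{\mathbf{a}}(1)m_1=\sigma_d(m)$ at $z^m$. Combining the two contributions produces the shift by $\sigma_d(m)$ in \eqref{eq:vcavcb}, and taking $\xi^{l,m}_{j,k}$ to be the resulting composite isomorphism completes the argument. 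I expect the main obstacle to be the first step: making rigorous the assertion that the Floer complex of a matching cycle, localized over one intersection point of the matching path, is computed in the corresponding fiber together with a clean degree shift. This requires the graded form of Seidel's matching-cycle package combined with the K\"unneth-type splitting of $\eta_{\nu_{\mathbf{a}}}$, and the accompanying monodromy bookkeeping that produces precisely $\sigma_w(m)$ with the correct sign (rather than an off-by-one or sign-reversed variant) --- the latter being controlled by reducing to the explicit one-dimensional geometry of Section~\ref{sec:1dcase}.
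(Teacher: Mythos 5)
Your proposal follows essentially the same route as the paper's proof: localize $\Hom^\bullet_{\mathbf{a},m}(L_j,L_k)$ at the fibers of the matching cycles over $z^m$, transfer to the $W_{\mathbf{b}}$-picture via Proposition~\ref{prop:fibprod}, extract the index shift $\sigma_w(m)$ from a winding-number count under $f=(-1)^{c_2}e^u/W_{\mathbf{c}}$, and obtain the degree shift $\sigma_d(m)$ from the order of $\eta_{\nu_{\mathbf{c}}}$ at $0$ and $\infty$. The paper implements the winding count with explicit arcs $\ell_j,\ell_k$ joining the lifted matching-path endpoints to $\tilde{z}^m$ and then isotopes their images to $\delta$-paths in the $W_{\mathbf{b}}$-plane, but this is the same bookkeeping as your separating-loop argument.

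Two points in your write-up need repair. First, Step~2 asserts that the fiber of $\Psi^{*_{\mathbf{a}}}_{\mathbf{D}(\mathbf{a})}$ over $z^m$ is a smooth fiber of $W_{\mathbf{b}}$. This fails in the extremal cases $m=-(k-j)/a_0$ (and symmetrically $m=(k-j)/a_1$), where $z^m$ is a common endpoint of the matching paths and hence, by Corollary~\ref{cor:critvt}, a critical value of $\Psi^{*_{\mathbf{a}}}_{\mathbf{D}(\mathbf{a})}$: there the matching cycles meet in the single critical point rather than along Lagrangians in a smooth fiber, so your uniform mechanism breaks down. The paper handles this sub-case separately, observing that then $k+l-\sigma_w(m)=j+l$, so the target is $\mathbb{K}\cdot 1_{L^{\mathbf{b}}_{j+l}}$ and the isomorphism is immediate from the single transverse intersection; you should carve out this case the same way. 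Second, $f$ has a pole of order $a_1$ at $\infty$, not a zero, since $W_{\mathbf{c}}$ vanishes to order $a_1$ there; taken literally, your statement would reverse the sign of the shift in the $m>0$ case (giving $k+l+a_1m_1$ instead of $k+l-a_1m_1$), so the winding computation near $\infty$ must be redone with the correct local form, as in the paper's treatment of the $m_0=0$ case --- the conclusion $k+l-\sigma_w(m)$ you state is the correct one.
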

\begin{proof}[Proof of Claim~\ref{claim1}]
	  First, note that either $m_1 = 0$ or $m_0 = 0$. If $m_1 = 0$, then the intersection point $z^m$ of the paths $\mu_j$ and $\mu_k$ in $P_1$ lies in the unit disc (see equation~\eqref{eq:tildemudef}). There are two sub-cases to consider here. First, if $j \equiv k \pmod{a_0}$ and $m = - m_0 =  (j - k)/a_0$ then $z^m \in \mu_j \cap \mu_k$ occurs at the endpoint of the matching paths implying the vanishing spheres intersect in a point. In this case, $k + l - \sigma_w (m) = j + l$ so that  $\mathbb{K} \cdot 1_{L_{j + l}} =  \Hom^0
	  _{\mathcal{A}_{\mathbf{b},\nu_{\mathbf{b}}, n}} (L_{j + l}^{\mathbf{b}}, L^{\mathbf{b}}_{k + l  - \sigma_w (m)})$ is isomorphic to $\Hom^\bullet_{\mathbf{a}, m} (L_j, L_k)$ up to the grading. The grading shift will follow from the discussion of the second sub-case where $m = -m_0 > (j - k) / a_0$. 
	  
	  For $m = -m_0 > (j - k) / a_0$, the line segments of the lifts $\tilde{\mu}_{j} (t) + 2 \pi i m_0 = \ln (\mu_{j} (t) )$ and $\tilde{\mu}_k (t) = \ln (\mu_k(t) )$ for $t \in \left[-C/a_0, - C m_0 / (j - k) \right]$ lie in the real negative half plane and intersect at $\tilde{z}^m = \tilde{\mu}_k ( -C m_0 / (j - k)) = -C m_0 / (j - k) - 2 \pi j m_0 i / (j - k)$. By equation~\eqref{eq:Cdef}, the constant $C$ satisfies $ \phi (\exp ( \tilde{\mu}_i ({-C / a_0}))) \in W^{-1}_{\mathbf{c}} (*_\mathbf{c})$ for any $i$.  
	  %Split \ell into two paths \ell_{k}$ and $\ell_{j}$
	  Define two paths, $\ell_{j}$ from $\tilde{\mu}_{j} (- C / a_0 ) + 2 \pi i m_0 = [- C  - 2 \pi ( j - m_0 a_0  ) i] / a_0$ to $\tilde{z}^m$, and $\ell_k$ from $\tilde{\mu}_k (- C / a_0 ) = [- C  + 2 \pi  j  i] / a_0$ to $\tilde{z}^m$.  Write $\ell_{j,k}$ for the concatenation of $\ell_j$ with $-\ell_k$ (where the negative sign indicates a reversed orientation). Note that $\ell_{j,k}$ lies in the negative real half plane. These paths are illustrated on the left in  Figure~\ref{fig:arcconnect}.

\begin{figure}[h]
\begin{picture}(0,0)%
\includegraphics{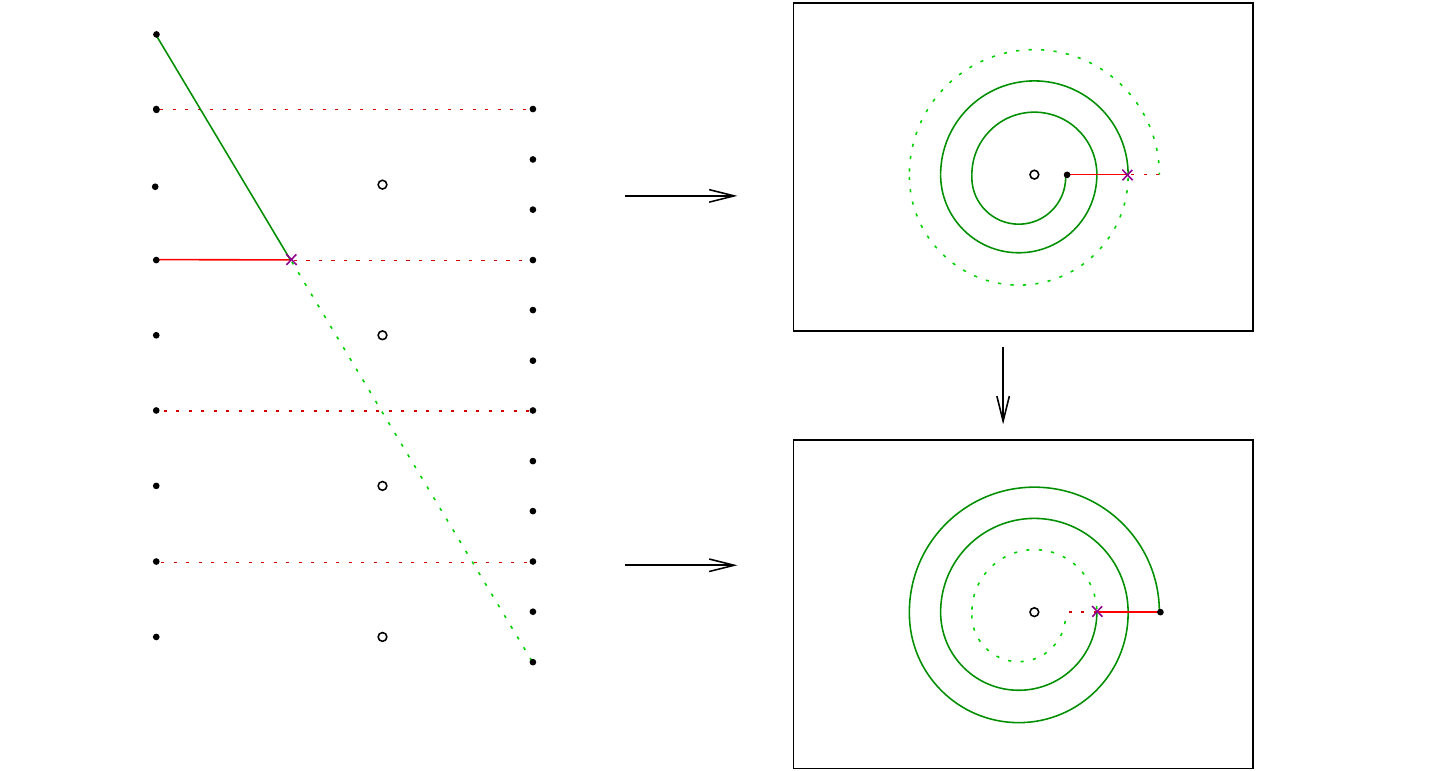}%
\end{picture}%
\setlength{\unitlength}{3947sp}%
\begin{picture}(6854,3699)(5836,-3373)
\put(6801,-41){\makebox(0,0)[lb]{\smash{$\ell_k$}}}
\put(6801,-1111){\makebox(0,0)[lb]{\smash{$\ell_j$}}}
\put(7201,-811){\makebox(0,0)[lb]{\smash{$\tilde{z}^{-1}$}}}
\put(8061,-2800){\makebox(0,0)[lb]{\smash{$\tilde{\mu}_k$}}}
\put(6751,-3211){\makebox(0,0)[lb]{\smash{$(a_0,a_1) = (2,3)$}}}
\put(8551,-436){\makebox(0,0)[lb]{\smash{$W_{\mathbf{c}} \circ \exp \circ \phi$}}}
\put(10876,-1561){\makebox(0,0)[lb]{\smash{$(-1)^{c_2} e^u z^{-1}$}}}
\put(11487,-541){\makebox(0,0)[lb]{\smash{$q_{\mathbf{c}}$}}}
\put(11507,-2674){\makebox(0,0)[lb]{\smash{$q_{\mathbf{b}}$}}}
\put(5851,-1711){\makebox(0,0)[lb]{\smash{$\tilde{\mu}_j$}}}
\put(5851,-2461){\makebox(0,0)[lb]{\smash{$\tilde{\mu}_j - 2 \pi i$}}}
\put(5851,-961){\makebox(0,0)[lb]{\smash{$\tilde{\mu}_j + 2 \pi i$}}}
\put(5851,-211){\makebox(0,0)[lb]{\smash{$\tilde{\mu}_j + 4 \pi i$}}}
\end{picture}%
\caption{\label{fig:arcconnect} Arcs $\ell_{j}$ and $\ell_k$ and their images in the $W_{\mathbf{c}}$ and $W_{\mathbf{b}}$ planes.}
\end{figure}

Recall from equation~\eqref{eq:explicit} that  $W_{\mathbf{c}} (\phi (z))$ has an order $a_0$ ramification at $0$ so that the image $W_{\mathbf{c}} (\phi ( \exp (\ell_{j, k})) )$ of $\ell_{j,k}$ has winding number $- ( k - j - a_0 m_0)$ as in the upper right side of Figure~\ref{fig:arcconnect}. Applying $(-1)^{c_2} e^u / W_{\mathbf{c}} \circ \phi $ to $\exp (\ell_j)$ and $\exp (\ell_k)$ give $W_{\mathbf{b}}$-admissible paths $\ell^{\mathbf{b}}_{j}$, $\ell_{k}^{\mathbf{b}}$ from $q_{\mathbf{b}}$ to $q_0 : = (-1)^{c_2} e^u / W_{\mathbf{c}} (\phi(z^m)) $. Calculating winding numbers, one observes that with $*_{\mathbf{b}} = q_0$, the paths  $\ell_j^{\mathbf{b}}$ and $\ell_k^{\mathbf{b}}$ are isotopic to $\beta^v_j$ and $\beta^v_k$ respectively, for an appropriate $v$ in the $W_{\mathbf{b}}$-plane.
	  
	  As the winding number of $\ell^{\mathbf{b}}_{k}$  concatenated with $-\ell^{\mathbf{b}}_{j}$ is $(k - j - a_0 m_0)$  and since it lies in the disc of radius $|q_{\mathbf{b}}|$, it is isotopic to a concatenation of $\delta_{k + l - a_0 m_0}$ and $-\delta_{j + l}$ where the isotopy is obtained via a theta varying  twist map $\tau_{\theta, 0}$ (defined in equation~\eqref{eq:twist}). Take $\tilde{*}$ to be the base point for the paths $\delta_{k + l - a_0 m_0}$ and $-\delta_{j + l}$. Performing  symplectic parallel transport along the $W_{\mathbf{b}}$ fibers over the isotopy gives a symplectomorphism from $W^{-1}_\mathbf{b} (q_0)$ to $W_\mathbf{b}^{-1} (\tilde{*})$ which, upon pulling back relative to $\rho$ in the fiber product~\eqref{eq:circuitbifib}, sends the vanishing cycles  $\vc_{\ell^{\mathbf{b}}_{k}}$ and $\vc_{\ell^{\mathbf{b}}_{j}}$ to $L_{k + l - a_0 m_0} = L_{k + l - \sigma_w (m)}$ and $L_{j + l}$, respectively. By incorporating the Hamiltonian isotopy into a fiberwise perturbation over $\mu_j$ and $\mu_k$ near $z^m$, we may assume this identifies these pullbacks with the fiber of the matching cycles $L_{k + l - \sigma_w (m)}$ and $L_{j + l}$ over $z^m$. This identifies the Floer complex $CF^* (\vc_{\ell^{\mathbf{b}}_{j}}, \vc_{\ell^{\mathbf{b}}_{k}} )$ with $\Hom_{\mathbf{a}, m} (L_{j + l},  L_{k + l - \sigma_w (m)} )$.
	  
Finally, to observe the shift of $\sigma_d (m)$ in the grading, we grade $L_j$ so that the isomorphism 
\begin{align*} \xi^{0, 0}_{j, k} :\Hom^\bullet_{\mathbf{a}, 0} (L_j, L_k) \to \Hom^\bullet_{\mathcal{A}_{\mathbf{b}, \nu_{\mathbf{b}}, n}} (L_j^{\mathbf{b}}, L_k^{\mathbf{b}}). \end{align*} 
respects the grading. We note that this is possible by the description of $\eta_{\nu_{\mathbf{a}}}$ given in Proposition~\ref{prop:fibprod}. Utilizing this description again, one observes that performing symplectic parallel transport relative to $\Psi^{*_{\mathbf{a}}}_{\mathbf{D} (\mathbf{a})}$ along a counter-clockwise path once around the origin yields a grading shift of $2 \nu_{\mathbf{c}} (0) = 2 \nu_{\mathbf{a}} (0)$ (as $\eta_{\nu_{\mathbf{c}}}$ has order $\nu_{\mathbf{c}} (0) - 1$ at the origin).  The winding number of the path from $z_0$ to $z_m$ along $\mu_k$ and then to $z_0$ along $\mu_j$ is precisely $m_0$. Thus the grading shift for $\xi_{j,k}^{l, m}$ is $2 m_0 \nu_{\mathbf{a}} (0) = \sigma_d (m)$.

The case of $m_0 = 0$ yields an identical argument, with the exception of working with $\tilde{\mu}_j$ and $\tilde{\mu}_k$ the positive half plane, so we omit this repetition.
\end{proof} 

We note that $\xi^{l, m}_{j, k}$ is independent of $l$ in the sense that the monodromy of $W^{-1}_{\mathbf{b}} (*_\mathbf{b})$ obtained by winding around the origin $l$ times takes $L^\mathbf{b}_i$ to $L^{\mathbf{b}}_{i + l}$ inducing an equivalence $\Phi_l$ on $\mathcal{F} (W_{\mathbf{b}}^{-1} (*_\mathbf{b}))$. From the construction, it is clear that $\xi^{l + l^\prime, m}_{j, k} \circ \Phi_{l^\prime} = \xi^{l, m}_{ j, k} $.  Taking $\xi_{j, k}^l = \oplus_m \xi_{j, k}^{l, m}$, and using equation~\eqref{eq:dirsum} , we obtain the isomorphism 
\begin{align} \label{eq:xidef}
 	\bigoplus_{-\frac{k - j}{a_0}  \leq m \leq \frac{k - j}{a_1}} \Hom_{\mathcal{A}_{\mathbf{b}, \nu_{\mathbf{b}}, n}}^{\bullet - \sigma_d (m)} (L_{j + l}^{\mathbf{b}}, L^{\mathbf{b}}_{k + l  - \sigma_w (m)})  \stackrel{\xi_{j, k}^l}{\longrightarrow} \Hom^{\bullet}_{\mathcal{A}_{\mathbf{a}, \nu_{\mathbf{a}}, n}} (L_j, L_k).
\end{align}

Turning to the $B$-model, notationally distinguish $R_{\mathbf{a}, \nu_{\mathbf{a}}}$ and $R_{\mathbf{b}, \nu_{\mathbf{b}}}$ by taking the basis $\{v_0, \ldots, v_{d + 1}\}$ for $V$  and $\{w_0, \ldots, w_{d}\}$ for $W$, and defining $R_{\mathbf{a}, \nu_{\mathbf{a}}} = \symalg{V}$ and $R_{\mathbf{b}, \nu_{\mathbf{b}}} = \symalg{W}$. Here, weights and degrees are assigned according to equation~\eqref{eq:degrees} with respect to $\mathbf{a}$, $\nu_{\mathbf{a}}$ and $\mathbf{b}$, $\nu_{\mathbf{b}}$ respectively.  Given $x = \prod_{i = 0}^{d + 1} v_i^{r_i} \in R_{\mathbf{a}, \nu_{\mathbf{a}}}$, define
\begin{align*} m_0 (x) & = r_0 - \min \{r_0, r_1\} , & m_1 (x) & = r_1 - \min \{r_0, r_1\} . 
\end{align*} 
For $ s \in \mathbb{N}$ and $m \in \mathbb{Z}$ satisfying $- s / a_0 \leq m \leq s / a_1$ take $m_0$, $m_1$ as in equation~\eqref{eq:m0m1}, so that $m = m_1 - m_0$, and $\sigma_d (m), \sigma_w (m)$ as in equation~\eqref{eq:sigma}.
Consider the space $\symalgh{V}{s}$ of homogeneous elements of weight $s$ and  define a projection 
\begin{align*}\chi_{s}^m : \symalgh{V}{s} \to \symalgh{W}{s - \sigma_w (m)}[\sigma_d (m)] \end{align*} by taking
\begin{align*}
	\chi_s^m (x ) =  \delta_{m ,m_1(x) -  m_0 (x)} w_0^{\min \{r_0, r_1\}}   \prod_{i = 1}^d w_i^{r_{i + 1}}.
\end{align*} 
The intuition behind $\chi_s^m$ is to identify $v_0 v_1$ with $w_0$ and divide $x$ by $v_0^{m_0 (x)}$ or $v_1^{m_1 (x)}$, thereby decreasing the weight (and degree) of $x$ by $a_0 m_0 (x)$ or $a_1 m_1(x)$ (and $\sigma_d (m)$) respectively. It is an elementary check to show that the direct sum of these maps yields an isomorphism 
 \begin{align} \label{eq:defchis}
 	\chi_s = \oplus \chi_s^m  : \symalgh{V}{s} \stackrel{\cong}{\longrightarrow} \bigoplus_{-\frac{s}{a_0}  \leq m \leq \frac{s}{a_1}} \symalgh{W}{s - \sigma_w (m)}[\sigma_d (m)].
 \end{align} 
By Definition~\ref{def:can} of $\mathcal{C}_{\mathbf{a}, \nu_{\mathbf{a}}, n}$, for any $j, k, l \in \mathbb{Z}$ with $j < k$ and $0 \leq l < \Vol (\mathbf{a})  - k + j$, there are natural maps which define the isomorphism $\chi^l_{j,k}$ in the following commutative diagram
\begin{equation*}
%\label{eq:defchi}
\begin{tikzpicture}[baseline=(current  bounding  box.center), scale=1.5]
\node (A) at (0,0) {$\symalgh{V}{k - j}$};
\node (B) at (4.5,0) {$\bigoplus_{m} \symalgh{W}{k - j - \sigma_w (m)}[\sigma_d (m)]$};
\node (C) at (0,1) {$\Hom^\bullet_{\mathcal{C}_{\mathbf{a}, \nu_{\mathbf{a}}, n}} (R_{\mathbf{a}} (j), R_{\mathbf{a}} (k))$};
\node (D) at (4.5,1) {$\bigoplus_{m} \Hom^{\bullet - \sigma_d (m)}_{\mathcal{C}_{\mathbf{b}, \nu_{\mathbf{b}}, n}} (R_{\mathbf{b}} (j + l), R_{\mathbf{b}} (k + l - \sigma_w (m)))$};
\path[->,font=\scriptsize]
(A) edge node[above]{$\chi_{k - j}$} (B)
(C) edge node[left]{$\cong$} (A)
(B) edge node[right]{$\cong$} (D)
(C) edge node[above]{$\chi_{k,j}^l$} (D);
\end{tikzpicture} 
\end{equation*}	
where the direct sum is for integers $m$ satisfying $ - \frac{k - j}{a_0}  \leq m \leq \frac{k - j}{a_1}$.

Given this preparation, we are now able to extend the functor $\Xi_{\mathbf{a}, n}$ to morphisms. For any $l \in \mathbb{Z}$  define  
\begin{align*}
	\Xi_{\mathbf{a}, n}: \Hom^\bullet_{\mathcal{C}_{\mathbf{a}, \nu_{\mathbf{a}}, n} } (R_\mathbf{a} (j), R_\mathbf{a} (k)) \to \Hom^\bullet_{\mathcal{A}_{\mathbf{a}, \nu_{\mathbf{a}}, n}} (L_j, L_k)  \end{align*}
as the composition 
\begin{align*}
 \Xi_{\mathbf{a}, n}: = \xi^l_{j, k} \circ \Xi_{\mathbf{b}, n} \circ  \chi_{j, k}^l.
\end{align*}
By Claim~\ref{claim1}, equation~\eqref{eq:defchis} and the induction hypothesis, we see that $\Xi_{\mathbf{a}, n}$ is an isomorphism of graded vector spaces. To conclude that $\Xi_{\mathbf{a},n}$ is an isomorphism, we need only show that it commutes with multiplication.

\begin{claim} \label{claim2} $\Xi_{\mathbf{a}, n}$ defines a functor. In particular, for $x_0 \in \Hom_{\mathcal{C}_{\mathbf{a}, \nu_{\mathbf{a}}, n}} (R_{\mathbf{a}} (i), R_\mathbf{a} (j))$ and  $x_1 \in \Hom_{\mathcal{C}_{\mathbf{a}, \nu_{\mathbf{a}}, n}} (R_{\mathbf{a}} (j), R_\mathbf{a} (k))$ we have
	\begin{align} \label{eq:commutes}
	\Xi_{\mathbf{a}, n} (x_1x_0 ) = m_2^{\mathbf{a}} \left(\Xi_{\mathbf{a}, n} (x_1), \Xi_{\mathbf{a}, n} (x_0) \right).
	\end{align}
\end{claim}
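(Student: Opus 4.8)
The plan is to verify \eqref{eq:commutes} on monomial generators $x_0, x_1 \in R_{\mathbf{a}}$, and to compute both sides through the Lefschetz bifibration $\Psi^{*_{\mathbf{a}}}_{\mathbf{D}(\mathbf{a})}$, reducing everything to the one-dimensional triangle count of Proposition~\ref{prop:mpd} in the base together with the induction hypothesis that $\Xi_{\mathbf{b}, n}$ is a functor. The first step is the elementary observation that the winding index is additive under multiplication: for $x_0 = \prod v_i^{r_i}$ and $x_1 = \prod v_i^{s_i}$ one has $m(x_1 x_0) = (r_1 + s_1) - (r_0 + s_0) = m(x_1) + m(x_0)$, even though $m_0$ and $m_1$ are not separately additive. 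Hence, under the decomposition \eqref{eq:dirsum} on the $A$-side and \eqref{eq:defchis} on the $B$-side, the product of a generator in the summand $m'$ with one in the summand $m''$ lies in the summand $m' + m''$, and it suffices to match, term by term, the geometric product with the algebraic one. On the $B$-side this amounts to the elementary identity that the collection $\{\chi^m_s\}$ transports multiplication in $\symalg{V}$ to an explicit operation on $\bigoplus_m \symalg{W}$ which, after the identification $v_0 v_1 \leftrightarrow w_0$, is ordinary multiplication up to a power of $w_0$; the extra power appears precisely when $m'$ and $m''$ have opposite signs, and its weight is $\sigma_w(m') + \sigma_w(m'') - \sigma_w(m'+m'')$, a multiple of $b_0 = a_0 + a_1 = \wt(w_0)$.

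The second step translates this to symplectic geometry. By Proposition~\ref{prop:alphavc} the vanishing cycles $\vc_{\alpha_k}$ are Hamiltonian isotopic to the doubled thimbles $\vs^{f}_{\beta^u_k}$, i.e.\ to matching cycles of $\Psi^{*_{\mathbf{a}}}_{\mathbf{D}(\mathbf{a})}\colon F_{*_{\mathbf{a}}} \to P_1 \cup \{-1\}$ fibred over the paths $\mu_k$ of Proposition~\ref{prop:intnumb}. As in \cite[Sections~16--17]{seidel}, the Floer product $m_2^{\mathbf{a}}$ between three such matching cycles is computed by holomorphic sections of the bifibration with boundary on the doubled thimbles; each section projects to a holomorphic triangle in the base relative to the collection $\mathbf{\mu}_{\mathbf{a}, n}$, and the count of sections over a fixed base triangle reduces to a fibrewise holomorphic triangle count in the $W_{\mathbf{b}}$-fibre between the vanishing cycles $L^{\mathbf{b}}$. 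By Proposition~\ref{prop:mpd:3} there is a unique base triangle for each pair $(z^{n_1}, z^{n_2})$, with $m^u(z^{n_2}, z^{n_1}) = z^{n_1+n_2}$ --- again matching the additivity of $m$ --- and the fibrewise count is exactly the product $m_2^{\mathbf{b}}$ on the weight-shifted objects already identified in Claim~\ref{claim1} through the isomorphisms $\xi^{l,m}_{j,k}$. Thus $\xi^l_{j,k}$ intertwines the fibrewise $m_2^{\mathbf{b}}$ with $m_2^{\mathbf{a}}$ on the matching cycles: when $n_1$ and $n_2$ share a sign the base triangle is contained in $P_1$ and contributes the coefficient $1$ (Proposition~\ref{prop:mpd:a}); when they do not, the triangle escapes through the reattached puncture (Proposition~\ref{prop:mpd:b}), whose bifibration fibre carries the divisor corresponding to $w_0$, and the section's intersection multiplicity there supplies exactly the power of $w_0$ found in the first step. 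Here one passes to the partial compactification of diagram \eqref{eq:circuitbifib2} to make sense of such sections, the relevant count being of the type controlled by the invariants $\kappa_{a_j}^j$.

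Assembling the two steps, and choosing the free parameters $l$ compatibly via the $l$-independence relation $\xi^{l+l',m}_{j,k}\circ \Phi_{l'} = \xi^{l,m}_{j,k}$ together with its $B$-model counterpart for $\chi$, both sides of \eqref{eq:commutes} become $\xi^l_{j,k}$ applied to a fibrewise product of the form $m_2^{\mathbf{b}}(\Xi_{\mathbf{b},n}\chi(x_1), \Xi_{\mathbf{b},n}\chi(x_0))$, possibly pre- or post-composed with multiplication by the power of $w_0$ described above; these agree by the algebraic compatibility of $\{\chi^m_s\}$ and the inductive hypothesis that $\Xi_{\mathbf{b},n}$ is a functor, which proves \eqref{eq:commutes} and hence the claim. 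I expect the main obstacle to be making the reduction in the second step rigorous: verifying that the moduli space of bifibration sections is regular, that the gluing formula for the matching-cycle product continues to hold over the non-compact base $P_1 \cup \{-1\}$ --- so that no sections are lost or created near the puncture, which must be controlled through the behaviour of $W_{\mathbf{c}}$ there --- and that the Pin-structure orientations of these moduli are matched under $\xi$. The latter is where the sign bookkeeping enters, since $m_2$ and the $A_\infty$ product $\mu^2$ differ by a Koszul sign depending on the parity of the first input (\cite[Section~1a]{seidel}) while the isomorphisms $\xi^{l,m}_{j,k}$ and $\chi^m_s$ carry the degree shifts $\sigma_d(m)$; one must check that all of these signs cancel.
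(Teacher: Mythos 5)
Your overall framework is the paper's: project holomorphic discs through $\Psi^{*_{\mathbf{a}}}_{\mathbf{D}(\mathbf{a})}$ to directed triangles in the base and invoke Proposition~\ref{prop:mpd}, handle the same-sign case by a fibrewise reduction to $m_2^{\mathbf{b}}$ together with the inductive functoriality of $\Xi_{\mathbf{b},n}$, and handle the mixed-sign case by passing through the puncture at $-1$ using the partial compactification of diagram~\eqref{eq:circuitbifib2} and $\kappa$-type counts. Your same-sign analysis, including the on-the-nose multiplicativity of $\chi$ in that regime, matches the paper's Case 1. The gap is in the mixed-sign case. You propose to match general products term by term, reading off the extra power of $w_0$ (of weight $\sigma_w(m')+\sigma_w(m'')-\sigma_w(m'+m'')$) from the section's intersection multiplicity with the divisor at the puncture, the count being ``of the type controlled by'' the $\kappa$-invariants. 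But $\kappa^j_n$ is only defined (equations~\eqref{eq:refmodspace}--\eqref{eq:defkappa}) for sections over a disc in the unfolding of degree exactly $n=b_j$, isolating sections meeting the divisor once; nothing in the paper controls counts whose base winds several times through the puncture, so products producing $w_0^r$ with $r\geq 2$ are not covered by the tools you cite. The paper sidesteps this by reducing, via associativity, every mixed-sign product to the single elementary case $(l_0,l_1)=(-1,1)$ (or its transpose), i.e.\ to $m_2^{\mathbf{a}}(\Xi_{\mathbf{a},n}(v_1),\Xi_{\mathbf{a},n}(v_0))$, where the base triangle passes once through $-1$ and the ramification order of $W_{\mathbf{c}}$ there is exactly $c_2=b_0$, so the section count is literally the one defining $\kappa^0_{b_0}$ for $\bar{W}_{\mathbf{b}}$. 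Your plan is missing this reduction, and without it (or a genuine generalization of $\kappa$ to higher tangency) the term-by-term matching does not close.

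Second, even in the elementary case, identifying the product as $\Xi_{\mathbf{a},n}(v_1v_0)$ requires more than locating an intersection with the divisor: $L^{\mathbf{b}}_i\cap L^{\mathbf{b}}_{i+b_0}$ generally contains several points, and the counts $\#\mathcal{M}_{\tilde{x}}=\kappa^0_{b_0}(\tilde{x})$ are evaluated only by invoking the inductive hypothesis~\eqref{eq:divisor} for $\mathbf{b}$, namely that $\kappa^0_{b_0}(\Xi_{\mathbf{b},n}(x))=1$ exactly when $x=w_0$ and vanishes otherwise; this is precisely why \eqref{eq:divisor} is built into the statement of Theorem~\ref{thm:Amodel}. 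Your proposal gestures at the invariants (and writes $\kappa^j_{a_j}$, though the relevant one is $\kappa^0_{b_0}$ for $\bar{W}_{\mathbf{b}}$) but never invokes \eqref{eq:divisor}; an ``intersection multiplicity'' alone cannot single out $w_0$ with coefficient one, so the mixed-sign case of \eqref{eq:commutes} remains unproved as written. The regularity, corner-rounding and orientation issues you flag are real but are the routine part; the missing ideas are the associativity reduction and the explicit use of the strengthened induction hypothesis.
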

\begin{proof}[Proof of Claim~\ref{claim2}] 
 We start on the $A$-model side with an observation. Suppose $0 \leq k_0 < k_1 < \cdots < k_s < n$ and $y_i \in \Hom_{{\mathbf{a}}, l_{i}} (L_{k_{i - 1}}, L_{k_{i}})$ for $1 \leq i \leq s$. If $m^\mathbf{a}_{s}$ denotes the $s$-th $A_\infty$-multiplication map in $\mathcal{A}_{\mathbf{a}, \nu_{\mathbf{a}}, n}$ then for $s \ne 2$ we have
 	\begin{align*}
 		m^\mathbf{a}_{s} (y_{l}, \ldots, y_{0}) = 0 .	
 	\end{align*}
 This follows from the observation that we may choose a regular complex structure for which $\Psi^{*_\mathbf{a}}_{\mathbf{D} (\mathbf{a})}$ is holomorphic (one may prove regularity by induction). Applying Proposition~\ref{prop:mpd:1} to obtain a generic collection of Hamiltonian perturbations $\mathcal{H}$ for $\mathbf{\mu}_{\mathbf{a}, n}$ we may identify the vanishing cycles $L_{k_i}$ as matching paths over the transversely intersecting $\psi^{1} \mu_{k_i}$ so that any holomorphic disc $\varphi : D \to P_{d} = E_{\mathbf{D}(\mathbf{a})} \cup D_{h, -1}$ with Lagrangian boundary conditions on $L_{k_i}$ must map via $\Psi^{*_\mathbf{a}}_{\mathbf{D} (\mathbf{a})}$ to a directed disc in $P_1 \cup \{[-1:1]\} \cong \mathbb{C}^*$ relative to the perturbed collection $\mathbf{\mu}_{\mathbf{a}, n}$. By Proposition~\ref{prop:mpd}, such discs are either constant or holomorphic triangles. In the former case, $\textnormal{im} (\varphi ) = p \in  \mu_i \cap \mu_j$ (where we again drop the perturbation term from the notation), but as no other vanishing cycle lies over this point, we have that $\textnormal{im} (\varphi )$ must be a bi-gon in $\left(\Psi^{*_\mathbf{a}}_{\mathbf{D} (\mathbf{a})} \right)^{-1} (q) \cong W_{\mathbf{b}}^{-1} ((-1)^{c_2}*_\mathbf{a} / W_{\mathbf{c}} (q) )$. By the induction hypothesis utilized in the previous claim to identify the fibers of the matching cycles over $p$ with $L_i^\mathbf{b}$ and $L_j^{\mathbf{b}}$, the differential on the Floer complex $CF^* (L_i^\mathbf{b}, L_j^{\mathbf{b}}) $ is zero. 
 
Before proceeding further, let us simplify and detail our notation by taking $k_0 = i$, $k_1 = j$, $k_2 = k$, $y_0 \in \Hom_{\mathbf{a}, l_0} (L_i, L_j)$ and $ y_1 \in \Hom_{\mathbf{a}, l_1} (L_j, L_k)$. We will assume that $y_0 = \Xi_{\mathbf{a}, n} (x_0 )$ and $y_1 = \Xi_{\mathbf{a}, n} (x_1)$ where $x_0$ and $x_1$ are monomials in $\mathcal{C}_{\mathbf{a}, \nu_{\mathbf{a}}, n}$.  Applying Proposition~\ref{prop:mpd:3} we observe that 
\begin{align*} m^\mathbf{a}_{2} (y_1, y_0) \in \Hom_{\mathbf{a}, l_{0} + l_{1}} (L_{i}, L_{k}).
\end{align*} 
Moreover, we have two potential scenarios as outlined in  Proposition~\ref{prop:mpd:a} and \ref{prop:mpd:b}. 
\\
\begin{figure}[h]
\begin{picture}(0,0)%
 \includegraphics{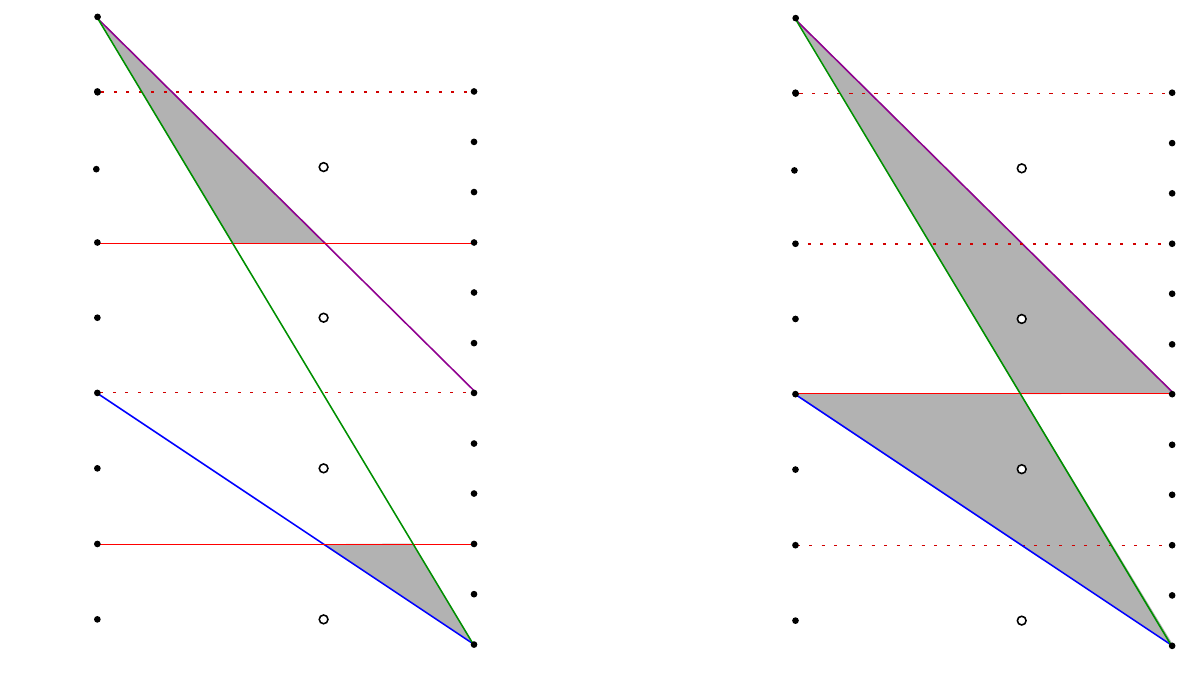}%
 \end{picture}%
 \setlength{\unitlength}{3947sp}%
 \begin{picture}(5741,3223)(2767,-2985)
 \put(4742,-2289){\makebox(0,0)[lb]{\smash{$\tilde{z}^1$}}}
 \put(5126,-2886){\makebox(0,0)[lb]{\smash{$\tilde{z}^1$}}}
 \put(4349,-863){\makebox(0,0)[lb]{\smash{$\tilde{z}^0$}}}
 \put(3632,-1142){\makebox(0,0)[lb]{\smash{$\tilde{z}^{-1}$}}}
 \put(4200,-2600){\makebox(0,0)[lb]{\smash{$\tilde{z}^0$}}}
 \put(2882, 100){\makebox(0,0)[lb]{\smash{$\tilde{z}^{-1}$}}}
 \put(8461,-1695){\makebox(0,0)[lb]{\smash{$\tilde{z}^1$}}}
 \put(8461,-2886){\makebox(0,0)[lb]{\smash{$\tilde{z}^1$}}}
 \put(7828,-1831){\makebox(0,0)[lb]{\smash{$\tilde{z}^0$}}}
 \put(6200,-1695){\makebox(0,0)[lb]{\smash{$\tilde{z}^{-1}$}}}
 \put(6200, 100){\makebox(0,0)[lb]{\smash{$\tilde{z}^{-1}$}}}
 \end{picture}%
\caption{\label{fig:triangles} Directed triangles for $(a_0, a_1) = (2, 3)$.}
\end{figure} 

\textit{Case 1:} $l_0$ and $l_1$ have the same sign.
 
  We assume $l_i < 0$ as the proof when $l_i > 0$ is analogous. 
  Then we may uniquely decompose $x_0 = v_0^{-l_0} x^\prime_0$ and $x_1 = v_0^{-l_1} x^\prime_1$ so that $m_r (x^\prime_s) = 0 $ for $r, s \in \{0, 1\}$. Let $\tilde{x}_0 = \chi_{i,j}^0 (x_0)$ and $\tilde{x}_1 = \chi_{j, k}^{l_0} (x_1)$ be the associated monomials in $\mathcal{C}_{\mathbf{b},n}$. Then, by the definition of $\chi_s^m$ one easily shows 
  \begin{align} \label{eq:basic1}
  	\chi_{i,k}^{0} (x_1 x_0) = \chi^0_{i, k + a_0 ( l_0 + l_1 )} (x^\prime_1 x^\prime_0) =  \tilde{x}_1 \tilde{x}_0.
  \end{align}
  
  Now, letting $l_2 = l_0 + l_1$, Proposition~\ref{prop:mpd} implies there is a unique directed disc $D_{l_1,l_0}$ in $\mathbb{C} \backslash \{-1, 0\} \cong P_1$ which bounds $\mu_i, \mu_j$ and $\mu_k$ and takes the marked points $\zeta_i$ to $z^{l_i}$ for $i = 0, 1, 2$. This is illustrated in the logarithmic pullback on the left hand side of Figure~\ref{fig:triangles}. Write $\bar{D}_\mathbf{b}: = (-1)^{c_2} e^u / W_{\mathbf{c}} ( D_{l_1,l_0} )$ for its image in the $W_{\mathbf{b}}$-plane, take $p_i = (-1)^{c_2} e^u / W_{\mathbf{c}} (z^{l_i})$ to be its vertices and $D_{\mathbf{b}} = \bar{D}_{\mathbf{b}} \backslash \{p_0, p_1, p_2\}$ its pointed disk which we equip with strip like ends. The moduli space of discs with boundary conditions $(L_i, L_j, L_k)$ whose strip like ends converge to $y_0$, $y_1$ and $y_2$ then equals the moduli space of sections of $W_{\mathbf{b}} : P_{d- 1} \to \mathbb{C}^*$  over the disc $D_\mathbf{b}$ with boundary conditions on the image of the vanishing cycles $L_i, L_j, L_k$ in $P_{d - 1}$. Since $D_{\mathbf{b}}$ is a contractible disc, $W_{\mathbf{b}}|_{D_{\mathbf{b}}}$   may be globally trivialized, using symplectic parallel transport along the boundary. The moduli space of sections with boundary conditions given by this transport map is isotopic to one with constant Lagrangian boundary conditions given by taking  $L^{\mathbf{b}}_i$ along $\partial_0 D_{\mathbf{b}}$, $L^{\mathbf{b}}_{j + a_0 l_0}$ along $\partial_1 D_{\mathbf{b}}$ and $L^{\mathbf{b}}_{k + a_0 ( l_0 +  l_1)}$ along $\partial_2 D_{\mathbf{b}}$. It follows from the construction of $\xi$ that the component of this moduli space defining  $m^\mathbf{a}_2 (y_1, y_0)$ has strip like ends converging to $\tilde{y}_0 \in L^\mathbf{b}_i \cap L_{j + a_0 l_0}^{\mathbf{b}}$ and $\tilde{y}_1 \in L^{\mathbf{b}}_{j + a_0 l_0}  \cap L^{\mathbf{b}}_{k + a_0 (l_0 +  l_1)}$ where $\xi_{i,j}^{0, l_0}(  \tilde{y}_0 ) = y_0$ and $\xi_{j,k}^{a_0 l_0, l_1} (\tilde{y}_1 ) = y_1$.  Such a space of sections is cobordant to the moduli space defining $m^{\mathbf{b}}_2$ in $\mathcal{A}_{\mathbf{b}, \nu_{\mathbf{b}}, n}$ which, by the induction hypothesis, is cobordant to a single point defining $m^{\mathbf{b}}_2 (\tilde{y}_1 , \tilde{y}_0) = \tilde{y}_2$ where $ \tilde{y}_2  = \Xi_{\mathbf{b}, n} (\tilde{x}_1 \tilde{x}_0)$. This implies 
  \begin{align} \label{eq:basic2} m_2^{\mathbf{a}} (y_1, y_0) = \xi_{i, k}^{0,l_2} (\tilde{y}_2 ). \end{align} 
  
  Thus, collecting equations~\eqref{eq:basic1} and \eqref{eq:basic2} yields
    \begin{align*}
    	\begin{split}
    		\Xi_{\mathbf{a},n} (x_1 x_0) & =  \xi_{i, k}^{0,l_2} \left(\Xi_{\mathbf{b}, n} ( \chi_{i, k}^0 (x_1 x_0 ) ) \right) , \\ & = \xi_{i, k}^{0,l_2} \left( \Xi_{\mathbf{b}, n} (\tilde{x}_1 \tilde{x}_0) \right) ,  \\ & = \xi_{i, k}^{0,l_2} ( \tilde{y}_2 ) , \\ & =  m_2^{\mathbf{a}} \left( y_1, y_0 \right) , \\ & = m_2^{\mathbf{a}} \left( 	\Xi_{\mathbf{a},n} (x_1 ), 	\Xi_{\mathbf{a},n} (x_0) \right)
    	\end{split}
    \end{align*} 
which validates equation~\eqref{eq:commutes} and the claim for the first case.    \\

\textit{Case 2}: Either $(i,j,k, l_0, l_1 ) = (i, i + a_0  , i +   a_0 + a_1 ,-1 , 1)$ or $(i,j,k, l_0, l_1) = (i, i +  a_1 , i +  a_0 + a_1,  1, -1)$.

The case of $l_0$ and $l_1$ having opposite signs reduces to this case using associativity and a basic induction argument. The two sub-cases are proved in analogous ways, so we assume $-l_0 = 1 =   l_1$ as illustrated in the bottom right triangle in Figure~\ref{fig:triangles}. Using the isomorphism $\xi^{l}_{j,k}$ defined in equation~\eqref{eq:xidef} we have that the morphisms $y_0 = \xi^{0}_{i,j} (\tilde{x}_0 ) \in \Hom_{\mathbf{a}, l_0} (L_i, L_j )$ and $y_1 = \xi^{a_0}_{j,k} (\tilde{x}_1 ) \in \Hom_{\mathbf{a}, l_1} (L_j, L_k )$ where $\tilde{x}_0 , \tilde{x}_1 \in \Hom_{\mathcal{A}_{\mathbf{b}, \nu_{\mathbf{b}}, n}} (L^\mathbf{b}_i, L_{i}^\mathbf{b} )$ both represent the identity morphism. As $\Xi_{\mathbf{b}, n}$ is a unital functor, this implies that $y_0 = \xi^{0}_{i,j} \left(  \Xi_{\mathbf{b}, n} (e_0 ) \right)$ and $y_1 = \xi^{a_0}_{j,k} \left(  \Xi_{\mathbf{b}, n} (e_1 ) \right)$ where $e_0, e_1 \in \Hom (R_{\mathbf{b}} (i), R_{\mathbf{b}} (i))$ are identities. By the definition of $\chi$ we have that $e_0 = \chi_{i, j}^0 (v_0 )$ and $e_1 = \chi_{j, k}^{a_0} (v_1)$.  This implies that $y_0  = 	\Xi_{\mathbf{a}, n} (v_0 )$ and $y_1 = \Xi_{\mathbf{a}, n} (v_1 )$. Thus, to prove the claim, we need only show that
\begin{align} \label{eq:functor}
  m_2^{\mathbf{a}} (y_1, y_0 ) = \Xi_{\mathbf{a}, n} (v_1 v_0 ).
\end{align}

By Proposition~\ref{prop:mpd:b}, the product $m_2^{\mathbf{a}} (y_1, y_0 )$ is a linear combination of the set of intersection points of $L_i$ and $L_k$ lying over $z^0$ or $\Hom_{\mathbf{a}, 0} (L_i, L_{i + a_0 + a_1}) \cong \Hom_{\mathbf{A}_{\mathbf{b}, \nu_{\mathbf{b}}, n}} (L^{b}_i, L^{b}_{i + a_0 + a_1} )$. In other words, there are moduli spaces  $ \mathcal{M}_{\tilde{x}}$ for which 
\begin{align}\label{eq:product}
	m_2^{\mathbf{a}} (y_1, y_0 ) = \sum_{\tilde{x} \in L^{b}_i \cap L^{b}_{i + a_0 + a_1}}  \left( \#  \mathcal{M}_{\tilde{x}} \right) \xi_{i, i + a_0 + a_1}^{0} ( \tilde{x} ).
\end{align}  

Let us consider the spaces $\mathcal{M}_{\tilde{x}}$ in more detail. Take  $D$ to be the holomorphic triangle in $\mathbb{C}$ with boundary along $\tilde{\mu}_i, \tilde{\mu}_j$ and $\tilde{\mu}_k - 2 \pi i$. One can easily compute that there exists exactly one element $ p = - \pi i \in (\pi i + 2 \pi i \mathbb{Z}) \cap D$ and that $\bar{D} = \phi (\exp (D))$ is a holomorphically embedded triangle in $\mathbb{C}^*$ (after applying the appropriate generic Hamiltonian perturbations $\mathcal{H}$).   Note that we are now working in the fiber $F_t$ as opposed to $F^\circ_t$ and applying the right hand diagram in Proposition~\ref{prop:fibprod}. 
The moduli space of $\mathcal{M}_{\tilde{x}}$ consists of the space of sections of $\Psi^t_{\mathbf{D} (\mathbf{a})} : F_t \to \mathbb{C}^*$ over $\bar{D}$ with boundary conditions $L_i, L_j$ and $ L_k$ over $\mu_i, \mu_j$ and $\mu_k$ respectively.  As the paths in $\mathbb{C}^*$ are matching paths over which the Lagrangians are matching cycles, and as the intersections $y_0 \in L_i \cap L_j$ and $y_1 \in L_j \cap L_k$ occur over the endpoints, the Lagrangian boundary conditions  near $y_0$ and $y_1$ are exponentially converging and, applying an implicit function theorem (see, for example, \cite{ms}), we may round the corners $z^{-1}, z^1$ of $\bar{D}$ to obtain a domain $S$ as illustrated in its logarithmic preimage in Figure~\ref{fig:rounding}.
\begin{figure}[h]
	\begin{picture}(0,0)%
	\includegraphics{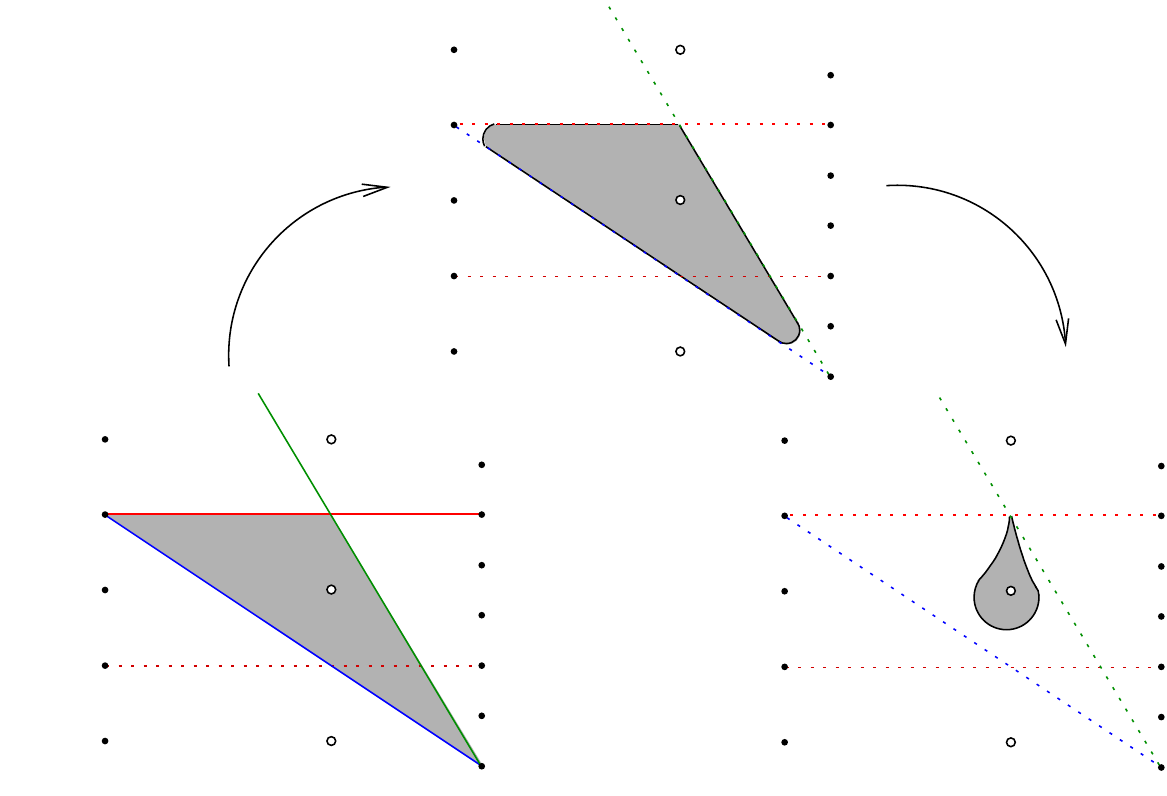}%
	\end{picture}%
	\setlength{\unitlength}{3947sp}%
	\begin{picture}(5598,3811)(3153,-2538)
	\put(4301,-1076){\makebox(0,0)[lb]{\smash{$L_i$}}}
	\put(4101,-1805){\makebox(0,0)[lb]{\smash{$L_j$}}}
	\put(5069,-1658){\makebox(0,0)[lb]{\smash{$L_k$}}}
	\put(6047,748){\makebox(0,0)[lb]{\smash{$L_i^{\mathbf{b}}$}}}
	\put(6615,464){\makebox(0,0)[lb]{\smash{$L_{i + b_0}^{\mathbf{b}}$}}}
	\put(7658,-1402){\makebox(0,0)[lb]{\smash{$L_i^{\mathbf{b}}$}}}
	\put(8144,-1393){\makebox(0,0)[lb]{\smash{$L_{i + b_0}^{\mathbf{b}}$}}}
	\put(4741,-1090){\makebox(0,0)[lb]{\smash{$\tilde{z}^0$}}}
	\put(5552,-2469){\makebox(0,0)[lb]{\smash{$\tilde{z}^1$}}}
	\put(3268,-1235){\makebox(0,0)[lb]{\smash{$\tilde{z}^{-1}$}}}
	\end{picture}%
	\caption{\label{fig:rounding} Perturbing boundary conditions for $\mathcal{M}_{\tilde{x}}$. }
\end{figure}
 The Lagrangian boundary condition thus obtained is the pullback of $L_i^{\mathbf{b}}$ near the lower boundary of the strip like end at $z^0$, along with its parallel transport counter-clockwise around $\partial S$. Sections of $\Psi_{\mathbf{D} (\mathbf{a})}^t$ over $S$ with this boundary condition gives  moduli space $\tilde{M}_{\tilde{x}}$ cobordant to $\mathcal{M}_{\tilde{x}}$ and, as $\Psi^t_{\mathbf{D} (\mathbf{a})}$ is regular on $S$ away from $0$, we may isotope $S$ to be a small disc around $-1$, simultaneously performing parallel transport to maintain the Lagrangian boundary condition of $L_i^{\mathbf{b}}$ in the lower boundary near $z_0$. Take $\bar{E}_S$ to be the inverse image of $\left(\Psi^t_{\mathbf{D} (\mathbf{a})} \right)^{-1} (S)$. As $g : = (-1)^{c_2} e^u / W_{\mathbf{c}} (\phi (z))$ has a $c_2 = (a_0 + a_1)$ order ramification at $-1$, using Proposition~\ref{prop:fibprod}, we have that the pullback
\begin{equation*}
\begin{tikzpicture}[baseline=(current  bounding  box.center), scale=2.3]
\node (A2) at (2.5,1) {$E_S$};
\node (B2) at (3.5,1) {$P_{d - 1} \cup C_0$};
\node (C2) at (2.5,0) {$S$};
\node (D2) at (3.5,0) {$\mathbb{C}.$};
\path[->,font=\scriptsize]
(A2) edge node[above]{$\rho$} (B2)
(B2) edge node[right]{$\bar{W}_{\mathbf{b}}$} (D2)
(A2) edge node[left]{$\Psi^t_{\mathbf{D} (\mathbf{a})}$} (C2)
(C2) edge node[below]{$g$} (D2);
\end{tikzpicture}
\end{equation*}
is symplectically isomorphic to the pullback in equation~\eqref{eq:pullback2}  with $n = a_0 + a_1$ and  $\tilde{E}_S = E_S$. By performing symplectic monodromy, we  may replace the Lagrangian boundary condition of the symplectic transport of $L_i^\mathbf{b}$ with $L_0^{\mathbf{b}}$. Then from equation~\eqref{eq:refmodspace}, moduli space of sections decomposes as 
\begin{align*}
\mathcal{M}_{c_2} ( \bar{W}_\mathbf{b} ) = \sqcup_{\tilde{x} \in L_i^{\mathbf{b}} \cap L_{i + a_0 + a_1}^\mathbf{b}} \mathcal{M}^{\tilde{x}, 0}_{a_0 + a_1} (\bar{W}_{\mathbf{b}} ),
\end{align*}
where $ \mathcal{M}^{\tilde{x}, 0}_{a_0 + a_1} (\bar{W}_\mathbf{b} ) = \mathcal{M}_{\tilde{x}}$. Since $b_0 = a_0 + a_1$, the definition of $\kappa_{b_0}^j (p)$ in equation~\eqref{eq:defkappa} gives \begin{align*}
\# \mathcal{M}_{\tilde{x}} = \# \mathcal{M}^{\tilde{x}, 0}_{a_0 + a_1} (\bar{W}_\mathbf{b} )=
\kappa^0_{b_0} (\tilde{x}). \end{align*}
	
By our induction hypothesis, we then have that $\# \mathcal{M}_{\tilde{x}}  = 1$ if and only if $\tilde{x} = \Xi_{\mathbf{b}, n} (w_0)$ and zero otherwise. Returning to equation~\eqref{eq:product} we have 
\begin{align*}
	m_2^{\mathbf{a}} (y_1, y_0) & = \xi^{0}_{i, i + a_0 + a_1} (\Xi_{\mathbf{b}, n} (w_0)),\\ & = \xi^{0}_{i, i + a_0 + a_1} \left(\Xi_{\mathbf{b}, n} (\chi_{i, i + a_0 + a_1}^0 (v_1 v_0)) \right), \\ & = \Xi_{\mathbf{a}, n} (v_1 v_0 ),
\end{align*}
validating equation~\eqref{eq:functor} and the claim.
\end{proof} 

To conclude the proof of Theorem~\ref{thm:Amodel}, we only need to verify equation~\eqref{eq:divisor} for $W_{\mathbf{a}}$ and $n = a_0$. Indeed, by permuting the indices, this argument then gives the result for any $a_i$ with $0 \leq i \leq p$ and by inverting $W_{\mathbf{a}}$ (which transposes the signature of $\mathbf{a}$), the argument can be run for $p + 1 \leq i \leq d + 1$. 

We recall the necessary preliminaries to define $\kappa^0_{a_0}$ in this case. The bottom row of diagram~\eqref{eq:circuitbifib2} gives that map $\bar{W}_{\mathbf{a}} : \bar{E}_{\mathbf{a}, 0} \to \mathbb{C}$ factors through $\Psi_{\mathbf{D} (\mathbf{a})} : \bar{E}_{\mathbf{a}, 0} \to \mathbb{C} \times \mathbb{C}$. Take $\bar{\bar{E}}_{\mathbf{a},0}$ to be the pullback from equation~\eqref{eq:pullback2} illustrated on the left of diagram~\eqref{eq:pullback4}.
\begin{equation} \label{eq:pullback4}
\begin{tikzpicture}[baseline=(current  bounding  box.center), scale=1.7]
\node (A) at (0,1) {$\Bar{\Bar{E}}_{\mathbf{a},0}$};
\node (B) at (1,1) {${\bar{E}}_{\mathbf{a},0}$};
\node (C) at (0,0) {$\mathbb{C}$};
\node (D) at (1,0) {$\mathbb{C}$};
\node (E) at (2.5,.5) {$\mathbb{C} \times \mathbb{C}$};
\path[->,font=\scriptsize]
(A) edge node[above]{$\lambda$} (B)
(A) edge node[left]{$\Bar{\Bar{W}}_{\mathbf{a}}$} (C)
(B) edge node[right]{$\bar{W}_\mathbf{a}$} (D)
(C) edge node[above]{$z^{a_0}$} (D)
(B) edge node[above]{$\Psi_{\mathbf{D} (\mathbf{a})}$} (E)
(E) edge node[below]{$\pi_1$} (D);
\end{tikzpicture} 
\end{equation}
Let $S \subset \mathbb{C}$ be a small pointed disc of radius less than $|q_{\mathbf{a}}|^{1 / a_0}$ with marked point $\zeta^0  \in \partial S$ equipped with the moving Lagrangian boundary condition of $\rho^* ( L_0 )$ near $\zeta^0$ and moving counter-clockwise by parallel transport. Write $F$ for this boundary condition and for $s \in \partial S$, $L_{0, s} = F|_{s}$. Let $u : \partial S \backslash \zeta^0 \to \mathbb{C}$ be the function $u(s) := \ln (s^{a_0}/q_{\mathbf{a}})$ with the assumption that $ \lim_{s \to \zeta^0} u(s) \in \mathbb{R}_{> 0}$ as $s$ approaches $\zeta^0$ from a clockwise direction. 
By Propositions~\ref{prop:fibprod} and \ref{prop:matchingpaths}, the image of $L_{0, s}$ via $\lambda$ is  $\vc^{u(s)}_{\alpha_{k}}$ which is a matching cycle relative to $\Psi^{u(s)}_{\mathbf{D} (\mathbf{a})}$ over the matching path and vanishing thimble $\vt_{\gamma^{u(s)}_k}$.

From equation~\eqref{eq:refmodspace}, and the fact that only the divisor $C_0$ was added to $\bar{E}_{\mathbf{a},0}$  (as opposed to $\cup_{i = 0}^p C_i$) the moduli space $\mathcal{M}_{a_0}^0$ of sections of $\bar{\bar{W}}_{\mathbf{a}}$ over $S$ equals
\begin{align*} \mathcal{M}^0_{a_0} (\bar{W}_{\mathbf{a}} ) = \bigsqcup_{x \in L_0  \cap L_{a_0}}  \mathcal{M}^{x, 0}_{a_0} (\bar{W}_{\mathbf{a}} ).
\end{align*}
Now, equation~\eqref{eq:divisor} follows from two observations. 

First, we note that for $x_0 := \Xi_{\mathbf{a}, n} (v_0) \in \Hom_{\mathcal{A}_{\mathbf{a}, \nu_{\mathbf{a}}, n}} (L_0, L_{a_0})$ there exists a section $\varphi_0 \in \mathcal{M}^{x_0, 0}_{a_0} (\bar{W}_{\mathbf{a}} ) $. To see this, recall from the proof of the previous claim that $x_0$ corresponds to the intersection $L_0 \cap L_{a_0}$ lying over the endpoint intersection $z^{-1} = \mu_0 \cap \mu_{a_0}$ via $\Psi^{\zeta^0}_{\mathbf{D} (\mathbf{a})}$. In particular, $x_0$ lies in the critical point set $\cp{\Psi_{\mathbf{D} (\mathbf{a})}}$ and, as $s \in \partial S$ moves counter clockwise around $\partial S$, since $L_{0, s}$ is a matching cycle over $\vt_{\gamma^u_k}$ which must contain the critical endpoint over $z^{-1}$, there is a map $\varphi_0|_{\partial S}: \partial S   \to \lambda^* \left( \cp{\Psi_{\mathbf{D} (\mathbf{a})}} \right)$. By equation~\eqref{eq:pullback4}, $W_{\mathbf{a}} \circ \lambda \circ  \varphi_0 |_{\partial S} = \pi_1 \circ \Psi_{\mathbf{D} (\mathbf{a})} \circ \varphi_0 |_{\partial S}$ has winding number $a_0$, and using the explicit form of $\cp{\Psi_{\mathbf{D} (\mathbf{a})}}$ in equation~\eqref{eq:cpPsi}, we see that it intersects $C_0$ with order $a_0$. Thus the closure of $\lambda^* \left(\cp{\Psi_{\mathbf{D} (\mathbf{a})}} \right)$ over $S$ is a smooth complex curve and $\varphi_0|_{\partial S}$ can be completed to a holomorphic section. We observe that, since restricting $W_{\mathbf{a}}$ to $\cp{\Psi_{\mathbf{D} (\mathbf{a})}}$ is a one-dimensional circuit potential, this argument is identical to that given in the proof of Proposition~\ref{prop:thma1d}. %STOPPED

For the second observation, we check that $\varphi_0 \in \mathcal{M}^0_{a_0} (\bar{W}_{\mathbf{a}} ) $ is unique. Given any $\varphi \in \mathcal{M}^0_{a_0}(\bar{W}_{\mathbf{a}} ) $, consider $g_{\varphi} : S \to \mathbb{C}$ defined to by $g_\varphi = \phi^{-1} \circ \pi_2 \circ \Phi_{\mathbf{D} (\mathbf{a})} \circ \lambda \circ \varphi$. Then for any $\theta \in (0, 2 \pi)$, we have $g_{\varphi} (e^{i \theta} \zeta^0) \in \exp ({\tilde{\tau}_\theta (\tilde{\mu}_0))})$ where $\tilde{\tau}_C$ was defined as the shear map in equation~\eqref{eq:shear}. For $\varphi \in \mathcal{M}^0_{a_0} (\bar{W}_{\mathbf{a}} ) $, the Maslov index of $g_{\varphi}$ must be $2$ which implies it has a first order zero at $0$. In particular, $h : = g_{\varphi_0}^{-1} \circ g_{\varphi}$ is well defined with image in $\mathbb{C}^*$ and vanishing winding number about $0$ implying that one may take a logarithmic branch $\ln (h)$. Extending $\tilde{\tau}_\theta (\tilde{\mu}_0)$ to a line $\ell_\theta$ for every $\theta$, the boundary conditions of $\ln (h)$ are such that $\ln (h(e^{i\theta})) \in \ell_\theta$. This implies that the Maslov index of $\ln (h) < 2$ and thus $h$ is constant at the intersection $\ell_0 \cap \ell_{2\pi} = 0$. Thus $h = 1$ and $g_{\varphi_0} = g_\varphi$. Consequently,  for every $s \in \partial S$,  $\lambda (\varphi (s)) \in \lambda (L_{0, s})$ lies over the same endpoint $z^{-1}$ of the matching path as $\lambda(\varphi_0 (s))$. But as was observed above, there is a unique point in $L_{0, s}$ mapping to such a critical value of $\Psi_{\mathbf{D} (\mathbf{a})}$ implying that $\varphi_0 |_{\partial S} = \varphi |_{\partial S}$. By unique analytic continuation, $\varphi = \varphi_0$ and 
\begin{align} \label{eq:finaleq} \mathcal{M}^0_{a_0} (\bar{W}_{\mathbf{a}} )  = \mathcal{M}_{a_0}^{x_0, 0} (\bar{W}_{\mathbf{a}} )  = \{\varphi_0\} \end{align} 
By the definition in \eqref{eq:defkappa}, we have
$\kappa_{a_0}^0 (x) = \# \mathcal{M}^{x , 0}_{a_0} (\bar{W}_{\mathbf{a}})$ is $1$ for $x = x_0 = \Xi_{\mathbf{a}, n} (v_0)$ and $0$ otherwise. This validates equation~\eqref{eq:divisor} and concludes the proof of Theorem~\ref{thm:Amodel}.

\bibliography{mybib}{}
\bibliographystyle{plain}
\end{document}